\newcommand{\fun}{\mathbb{F}_1}
\newcommand{\set}[2]{\left\{#1~\middle|~#2\right\}}
\renewcommand{\H}{{\mathbb{H}}}
\newcommand{\nc}{\newcommand}
\nc{\mc}{\mathcal}
\nc{\on}{\operatorname}
\nc{\ol}{\overline}
\nc{\Eh}{\on{CH}}
\nc{\wt}{\widetilde}
\nc{\F}{\mc{F}}
\nc{\M}{\on{M}}
\nc{\T}{\mc{T}}
\renewcommand{\H}{\on{H}}
\nc{\G}{\mc{G}}
\nc{\ov}{\overline}
\nc{\VFun}{\on{Vect}(\fun)}
\nc{\FF}{\mathbb{F}}
\nc{\ses}[3]{  #1 \hookrightarrow #2 \twoheadrightarrow #3 }
\nc{\Mat}{{\mathbf{Mat}}}
\nc{\f}{\mathbf{f}}
\nc{\g}{\mathbf{g}}
\nc{\E}{\mc{E}}
\nc{\EE}{\mathfrak{E}}
\nc{\MM}{\mathfrak{M}}
\nc{\pSet}{\mathcal{S}et_*}
\nc{\MS}{\mathbf{Mat}_{\bullet}}
\nc{\GG}{\mathbb{G}}
\nc{\BB}{\mathbb{B}}
\nc{\Emb}{\BB mod_{Emb}}
\renewcommand{\L}{\mc{L}}
\nc{\Set}{\mathcal{S}et_{\bullet}}
\renewcommand{\SS}{\mathcal{S}}
\theoremstyle{remark}
\theoremstyle{definition}
\newtheorem{mydef}{\textbf{Definition}}[section]
\newtheorem{myeg}[mydef]{\textbf{Example}}
\newtheorem{mythm}[mydef]{\textbf{Theorem}}
\newtheorem{rmk}[mydef]{\textbf{Remark}}
\theoremstyle{plain}
\newtheorem*{nothma}{\textbf{Theorem A}}
\newtheorem*{nothmb}{\textbf{Theorem B}}
\newtheorem*{nothmc}{\textbf{Theorem C}}
\newtheorem*{nothmd}{\textbf{Theorem D}}
\newtheorem{lem}[mydef]{\textbf{Lemma}}
\newtheorem{pro}[mydef]{\textbf{Proposition}}
\DeclareMathOperator{\id}{\textrm{id}}
\DeclareMathOperator{\Miso}{\mathcal{M}_{iso}}
\newcommand{\com}[1]{\ignorespaces}
\renewcommand{\H}{{\mathbb{H}}}
\nc{\h}{\mathfrak{h}}
\nc{\ch}{\on{CH}}
\nc{\C}{\mathcal{C}}
\nc{\fl}{\mathbf{FL}}
\renewcommand{\H}{\on{H}}
\nc{\FFF}{\mathbb{F}}
\begin{document}

 \title{Proto-exact categories of matroids, Hall algebras, and K-theory}
 \author{Christopher Eppolito,  Jaiung Jun,  and Matt Szczesny}
  \date{}
  \subjclass[2010]{18D99(primary), 05B35, 16T30, 19A99, 19D99 (secondary).}
  \keywords{matroid, matroid strong maps, matroid-minor Hopf algebra, Hall algebra, Proto-exact category, K-theory}
  \thanks{}
  \date{}
  \begin{abstract}
  	\normalsize{This paper examines the category $\MS$ of pointed matroids and strong maps from the point of view of Hall algebras. We show that $\MS$ has the structure of a finitary proto-exact category - a non-additive generalization of exact category due to Dyckerhoff-Kapranov. We define the algebraic K-theory $K_* (\MS)$ of $\MS$ via the Waldhausen construction, and show that it is non-trivial, by exhibiting injections $$\pi^s_n (\mathbb{S}) \hookrightarrow K_n (\MS)$$ from the stable homotopy groups of spheres for all $n$. Finally, we show that the Hall algebra of $\MS$ is a Hopf algebra dual to Schmitt's matroid-minor Hopf algebra. }
	
  \end{abstract}
  
\maketitle

\section{Introduction}

In this paper we examine the category of pointed matroids and strong maps from the perspective of Hall algebras, which have traditionally been studied in representation theory. This perspective sheds new light on certain combinatorial Hopf algebras built from matroids, and opens the door to defining algebraic K-theory of matroids. The rest of this introduction is devoted to introducing the main actors. 

\subsection{Hall algebras of Abelian and exact categories}

The study of Hall algebras is by now a well-established area with several applications in representation theory and algebraic geometry (see \cite{S} for a very nice overview). We briefly recall the generic features of the most basic version of this construction. Given an abelian category $\mc{C}$, let 
\[
{\fl}_{i}(\mc{C}) := \{ A_0 \subseteq A_1 \subseteq \cdots \subseteq A_i \vert \; A_k \in \on{Ob}(\C) \}
\]
denote the stack parametrizing isomorphism classes of flags of objects in $\mc{C}$ of length $i+1$ (viewed here simply as a set). Thus $${\fl}_0 (\C) = \on{Iso}(\C), $$ the moduli stack of isomorphism classes of objects of $\C$, and $${\fl}_1(\C) = \{ A_0 \subseteq A_1 \vert \; A_0, A_1 \in \on{Ob}(\C) \}$$ is the usual Hecke correspondence. We have maps 
\begin{equation} \label{Hecke_correspondence}
\pi_i : \fl_{1} (\C) \rightarrow \on{Iso}(\C), \; i=1, 2, 3,
\end{equation}
where
\begin{align*}
\pi_1 (A_0 \subseteq A_1) &= A_0, \\
\pi_2 (A_0 \subseteq A_1) &= A_1, \\
\pi_3 (A_0 \subseteq A_1) &= A_1/A_0. \\
\end{align*}
We may then attempt to define the Hall algebra of $\C$ as the space of $\mathbb{Q}$-valued functions on $\on{Iso} (\C)$ with finite support, i.e.
\[
\on{H}_{\C} = \mathbb{Q}_c[\on{Iso} (\C)]
\]
with the convolution product defined for $f, g \in \on{H}_{\C}$
\begin{equation} \label{Hallprod}
f \star g := \pi_{2 *}(\pi^*_3(f) \pi^*_1 (g)),
\end{equation}
where $\pi^*_i$ denotes the usual pullback of functions and $\pi_{i *}$ denotes integration along the fiber. To make this work, one has to impose certain finiteness conditions on $\C$. The simplest, and most restrictive such condition is that $\C$ is \emph{finitary}, which means that $\on{Hom}(M,M')$ and $\on{Ext}^1(M,M')$ are finite sets for any pair of objects $M,M' \in \C$. 

$\H_{\C}$ is spanned by $\delta$-functions $\delta_{[M]}, \; [M] \in \on{Iso}(\mc{C})$ supported on individual isomorphism classes, and the product (\ref{Hallprod}) can be explicitly written
\begin{equation} \label{deltamult}
\delta_{{[M]}} \star \delta_{{[N]}} = \sum_{ {R} \in \on{Iso}(\C) } \mathbf{P}^R_{M,N} \delta_{{R}} 
\end{equation}
where 
\[
\mathbf{P}^R_{M,N}  := \# \vert \{ L \subseteq R, L \simeq N, R/L \simeq M \} \vert
\]
The number
\[
\mathbf{P}^R_{M,N} \vert \on{Aut}(M) \vert \vert \on{Aut}(N) \vert
\]
counts the isomorphism classes of short exact sequences of the form
\begin{equation} \label{ses}
0 \rightarrow N \rightarrow R \rightarrow M \rightarrow 0,
\end{equation}
where $ \on{Aut}(M) $ is the automorphism group of $M$.  Thus, product in $\H_{\C}$ encodes the structure of extensions in $\C$. 

Important examples of finitary categories are $\C=\on{Rep}(Q,\mathbb{F}_{q})$ - the category of representations of a quiver $Q$ over a finite field $\mathbb{F}_q$, and $\C = Coh(X/\mathbb{F}_q)$ - the category of coherent sheaves on a smooth projective variety $X$ over $\mathbb{F}_q$. In these examples, the structure constants of $\on{H}_{\C}$ depend on the parameter $q$, and $\on{H}_{\C}$ recovers (parts of) quantum groups and their generalizations.

The basic recipe above extends more generally to the case where $\C$ is a finitary Quillen exact category (see \cite{H}). Exact categories can be viewed as a strictly full extension-closed subcategories of Abelian categories, and can be equivalently described in terms of classes $(\mathfrak{M}, \mathfrak{E})$ of \emph{admissible mono/epi-morphisms}. For example, the category of vector bundles (i.e. locally free sheaves) on a smooth projective curve $X/\mathbb{F}_q$ is an exact category, being an extension-closed full subcategory of $Coh(X/{\mathbb{F}_q})$, with $(\mathfrak{M}, \mathfrak{E})$ consisting of those monos/epis which are locally split. It is not Abelian, since the kernel/cokernel of a morphism of locally free sheaves may be a coherent sheaf that is not locally free. In this example, the stack $\on{Iso}(\C)$ is the domain of definition of automorphic forms for general linear groups over the function field $\mathbb{F}_q (X)$, and the Hall multiplication encodes the action of Hecke operators. Here, the theory makes contact with the Langlands program over function fields (for more on this, see the beautiful papers \cite{K1,KSV}).

\subsection{Hall algebras in a non-additive setting}

A closer examination of the basic construction of $\H_{\C}$ outlined above shows that the assumption that $\C$ be additive is unnecessary. All that is needed to make sense of the Hecke correspondence (\ref{Hecke_correspondence}) used to define $\H_{\C}$ is a category with a well-behaved notion of exact sequences. In the important paper \cite{DK}, the notion of \emph{proto-exact category} is introduced as a non-additive generalization of the notion of Quillen exact category above, and shown to suffice for the construction of an associative Hall algebra. As in the additive case, such a category is defined in terms of a pair $(\mathfrak{M}, \mathfrak{E})$ of admissible mono/epis which are required to satisfy certain properties. The simplest example of a non-additive proto-exact category is the category $\Set$ of finite pointed sets, with $\mathfrak{M}$ all pointed injections, and $\mathfrak{E}$ those pointed surjections which are isomorphisms away from the base-point. 

Many examples of non-additive proto-exact categories $\C$ arise in combinatorics. Here, $\on{Ob}(\C)$ typically consist of combinatorial structures equipped with operations of "inserting" and "collapsing" of sub-structures, corresponding to $(\mathfrak{M}, \mathfrak{E})$. Examples of such $\C$ include trees, graphs, posets, semigroup representations in $\Set$, quiver representations in $\Set$ etc. (see \cite{KS, Sz1, Sz2, Sz3, Sz4} ). The product in $\H_{\C}$, which counts all extensions between two objects, thus amounts to enumerating all combinatorial structures that can be assembled from two others. Here $\H_{\C}$ is typically (dual to) a combinatorial Hopf algebra in the sense of \cite{LR}. Many combinatorial Hopf algebras arise via this mechanism, including the Hopf algebra of symmetric functions, the Connes-Kreimer Hopf algebras of rooted trees and Feynman graphs, and many others. 

\subsection{ The Waldhausen $\mathcal{S}$-construction and K-theory of proto-exact categories}

It is a natural question what advantages, if any, there are to thinking of combinatorial objects in terms of proto-exact categories and Hall algebras. The answer, as evident in other forms of categorification, is that certain constructions are only visible at the categorical level. In \cite{DK}, the authors associate to a proto-exact category $\C$ a simplicial groupoid $\SS_{\bullet} \C$, called the \emph{Waldhausen $\SS$-construction}, where $\SS_{n} \C$ is closely related to $\fl_n$ above. $\SS_{\bullet} \C$ is shown to have a number of very interesting properties, including the structure of a \emph{2-Segal} space - a form of higher associativity, of which $\H_{\C}$ is but a shadow. This structure was also studied in the papers \cite{GKT1, GKT2, GKT3} from a somewhat different perspective.  

As explained in \cite{DK}, $\SS_{\bullet} \C$ may be used to define the algebraic K-theory of $\C$, by
\begin{equation} \label{kth_def}
K_n (\C) = \pi_{n+1} | \SS_{\bullet} \C |
\end{equation}
where $|\SS_{\bullet} \C |$ denotes the geometric realization of $S_{\bullet} \C$. These groups appear to contain interesting homotopy-theoretic information, even for very simple categories like $\Set$, as evidenced by the following result:

\begin{mythm}[\cite{CLS, Dei}] \label{Kset}
 $K_* (\Set) \simeq \pi^s_{*}(\mathbb{S})$,  where the right hand side denotes the stable homotopy groups of the sphere spectrum.
\end{mythm}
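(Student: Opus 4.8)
The plan is to reduce the computation to the Barratt--Priddy--Quillen theorem by exploiting the fact that the proto-exact structure on $\Set$ is \emph{split}. Indeed, an admissible short exact sequence $\ses{A}{B}{C}$ in $\Set$ consists of a pointed injection $A\hookrightarrow B$ together with the collapse map $B\twoheadrightarrow B/A\cong C$; deleting basepoints gives $B\setminus\{\ast\}=(A\setminus\{\ast\})\sqcup(C\setminus\{\ast\})$, so $B\cong A\vee C$ compatibly with the maps. Hence $(\Set,\vee)$ is a symmetric monoidal category in which every admissible exact sequence splits, and the strategy is to show that the Waldhausen $\SS$-construction of such a category only sees the symmetric monoidal core $(\on{iso}(\Set),\vee)$, whose $K$-theory is classical.

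On the monoidal side, the groupoid $\on{iso}(\Set)$ of finite pointed sets and pointed bijections is equivalent to $\coprod_{n\ge 0}\Sigma_n$, since a pointed set with $n$ non-basepoint elements has automorphism group $\Sigma_n$; thus $|\on{iso}(\Set)|\simeq\coprod_{n\ge 0}B\Sigma_n$ is a homotopy-commutative topological monoid under $\vee$. Its group completion is $\mathbb{Z}\times B\Sigma_\infty^{+}$, which by the Barratt--Priddy--Quillen theorem is $\Omega^\infty\mathbb{S}$; equivalently, the $K$-theory spectrum of the symmetric monoidal category $(\Set,\vee)$ is the sphere spectrum $\mathbb{S}$ (the content of the slogan $K(\mathbb{F}_1)=\mathbb{S}$).

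It remains to identify $|\SS_\bullet\Set|$ with the corresponding delooping. Here one checks that the ``successive subquotient'' functor $\SS_n\Set\to\on{iso}(\Set)^{\times n}$, sending a staircase diagram $(A_{ij})_{0\le i\le j\le n}$ to the tuple of quotients $(A_{i-1,i})_{i=1}^{n}$, is an equivalence of groupoids precisely because every admissible sequence in $\Set$ splits, and that under these equivalences the simplicial groupoid $\SS_\bullet\Set$ becomes a model for the bar construction of the monoidal groupoid $(\on{iso}(\Set),\vee)$. Consequently $\Omega|\SS_\bullet\Set|$ is the group completion of $|\on{iso}(\Set)|\simeq\coprod_{n}B\Sigma_n$, namely $\Omega^\infty\mathbb{S}$, so that $K(\Set)\simeq\mathbb{S}$ and $|\SS_\bullet\Set|$ is its single delooping. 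Feeding this into the definition gives $K_n(\Set)=\pi_{n+1}|\SS_\bullet\Set|=\pi_n\mathbb{S}=\pi^s_n(\mathbb{S})$ for all $n$ (consistently, $K_0(\Set)\cong\mathbb{Z}\cong\pi^s_0$).

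The main obstacle is making the previous paragraph precise: proving that, for a split proto-exact category, the Waldhausen $\SS$-construction agrees with the Segal ($S^{\oplus}$) group-completion construction on its symmetric monoidal core. In the additive setting this is Waldhausen's comparison theorem, which rests on the Additivity Theorem; in the proto-exact setting one needs the analogous additivity/$2$-Segal input, available from the Dyckerhoff--Kapranov framework \cite{DK} (alternatively one argues directly, as in \cite{CLS, Dei}). The subtle points are that the equivalences $\SS_n\Set\simeq\on{iso}(\Set)^{\times n}$ require non-canonical choices of splittings, so one must verify that these choices can be made coherently across faces and degeneracies, and one must carefully track the $\Sigma_n$-actions so that the comparison is an equivalence of simplicial spaces upon geometric realization, not merely a levelwise equivalence of homotopy types.
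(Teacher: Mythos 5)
Your outline is mathematically sound, but note that the paper does not prove this theorem at all: it is imported verbatim from \cite{CLS} and \cite{Dei}, and a later remark explains that those original arguments use Quillen's Q-construction, with \cite{Hek} supplying the comparison showing that the Q-construction and the Dyckerhoff--Kapranov $\SS_\bullet$-construction give isomorphic K-groups for proto-exact categories, so that the statement holds with the definition $K_n(\C)=\pi_{n+1}|\SS_\bullet\C|$ used here. Your proposal instead argues directly on the $\SS_\bullet$-construction: splitness of every admissible exact sequence in $\Set$ (correct, and in fact for pointed sets the subquotient functor $\SS_n\Set\to\on{Iso}(\Set)^{\times n}$ is a genuine equivalence of groupoids, since an automorphism of a flag inducing the identity on all subquotients is the identity, unlike the vector-space case), identification of $\SS_\bullet\Set$ with a bar construction on the symmetric monoidal groupoid $\coprod_n B\Sigma_n$, and then Barratt--Priddy--Quillen via group completion. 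This is essentially the classical argument underlying the cited references, so the content is the same; the difference is where the technical weight sits. The paper outsources the delicate step (Q- versus $\SS$-construction, or equivalently additivity/group completion for the $\SS$-construction) to \cite{CLS}, \cite{Dei}, \cite{Hek}, whereas in your route you must yourself establish the comparison you flag as the ``main obstacle'': the face maps of $\SS_\bullet\Set$ only agree with the bar-construction face maps up to the natural isomorphisms $A_{i-1,i+1}\cong A_{i-1,i}\vee A_{i,i+1}$, so the levelwise equivalences must be promoted to a zig-zag of simplicial objects (or handled by Segal/Waldhausen machinery from \cite{DK}), and since $\coprod_n B\Sigma_n$ is not group-like you also need the group-completion theorem to identify $\Omega|\SS_\bullet\Set|$ with $\mathbb{Z}\times B\Sigma_\infty^{+}\simeq\Omega^\infty\mathbb{S}$. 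With those inputs made precise your argument is a complete and self-contained proof, arguably better adapted to the paper's chosen definition of K-theory than the citation chain it actually uses.
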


\subsection{Matroids as a proto-exact category}

Matroids are combinatorial structures which abstract different notions of independence encountered across mathematics. A matroid $M$ consists of a finite set $E_M$ (the \emph{ground set}) together with a collection of  \emph{independent subsets} $I \subseteq 2^{E_M}$ satisfying certain natural properties. The prototypical example is obtained by taking $E_M$ to be a set of vectors in some vector space $V$, and taking $I \subseteq 2^{E_M}$ to be those subsets which are linearly independent. 
Matroids and their generalizations have found a vast array of applications across several areas of mathematics, such as for instance in tropical geometry, where \emph{valuated matroids} play the role of linear spaces. 

Matroids form a category $\MS$ with respect to \emph{strong maps}, which are a generalization of linear map in this setting, and it is the category $\MS$ that forms the object of study in this paper. Other aspects of $\MS$ have also been studied in \cite{HP}.  For technical reasons, we prefer to work with \emph{pointed matroids}, where the ground set $E_M$ is pointed by a distinguished element.  We show 

\begin{nothma}[\S \ref{MS_strong} and \S \ref{MS_Bmodules}] \label{mat_proto_thm}
The category $\MS$ of pointed matroids and strong maps has the structure of a finitary proto-exact category, with $\mathfrak{M}$ matroid restrictions and $\mathfrak{E}$ matroid contractions.
\end{nothma}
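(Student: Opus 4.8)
The plan is to verify, axiom by axiom, the definition of a proto-exact category from \cite{DK}: namely that $\MS$ is pointed; that $\mathfrak{M}$ and $\mathfrak{E}$ contain all isomorphisms and are closed under composition; that every morphism $0 \to A$ lies in $\mathfrak{M}$ and every $A \to 0$ in $\mathfrak{E}$; that a pushout of an $\mathfrak{M}$-morphism along an $\mathfrak{E}$-morphism exists, has its two new sides again in $\mathfrak{M}$ and $\mathfrak{E}$, and is \emph{bicartesian} (simultaneously a pushout and a pullback); the dual statement for pullbacks of $\mathfrak{E}$-morphisms along $\mathfrak{M}$-morphisms; and that a commuting square with one pair of parallel sides in $\mathfrak{M}$ and the other in $\mathfrak{E}$ is a pushout if and only if it is a pullback. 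The crucial structural fact (from the setup of $\MS$ in \S\ref{MS_strong}) is that a morphism in $\MS$ is just a basepoint-preserving map of ground sets satisfying the strong-map condition, so it is determined by its underlying function. This turns the last axiom into a formal consequence of the two existence axioms plus uniqueness of pushouts and pullbacks, and it reduces every universal-property check to a diagram chase on finite pointed sets together with a verification that the resulting function is strong.

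For the zero object take $\mathbf 0$ to be the pointed matroid whose ground set is the single basepoint: for every $M$ there is a unique basepoint-preserving function $E_{\mathbf 0}\to E_M$ and a unique one $E_M\to E_{\mathbf 0}$, and both are strong maps (the basepoint, being a loop, lies in every flat), so $\mathbf 0$ is initial and terminal. The zero morphisms are a restriction and a contraction, as $\mathbf 0\cong M|_{\{\star_M\}}$ and $\mathbf 0\cong M/(E_M\setminus\{\star_M\})$. Closure of $\mathfrak M$ and $\mathfrak E$ under composition and the presence of isomorphisms follow from the identities $(M|_S)|_T=M|_T$ (for $T\subseteq S$) and $(M/S)/T=M/(S\cup T)$; a restriction is a monomorphism and a contraction an epimorphism because their underlying functions are injective, resp. surjective, and morphisms are determined by underlying functions. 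One also notes that restriction, and contraction along a subset avoiding the basepoint, keep the basepoint a loop, so these operations stay within $\MS$.

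For the square axioms, fix $B\in\MS$, a subset $S\subseteq E_B$ with $\star_B\in S$, and a subset $T\subseteq S\setminus\{\star_B\}$, and form the commuting square with corners $A=B|_S$, $B$, $C=(B|_S)/T$, $D=B/T$ and sides the restriction $A\rightarrowtail B$, the contraction $A\twoheadrightarrow C$, the restriction $C\rightarrowtail D$, and the contraction $B\twoheadrightarrow D$; the identification $C\cong(B/T)|_{S\setminus T}$, needed so that $C\rightarrowtail D$ really is a restriction, is the standard commutation of restriction and contraction along disjoint subsets. Up to isomorphism of diagrams, every pair (an $\mathfrak M$-morphism $A\rightarrowtail B$, an $\mathfrak E$-morphism $A\twoheadrightarrow C$) arises this way, and dually so does every pair ($C\rightarrowtail D$ in $\mathfrak M$, $B\twoheadrightarrow D$ in $\mathfrak E$); hence it suffices to show that each such square is at once a pushout and a pullback in $\MS$. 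Given $u\colon B\to X$ and $v\colon C\to X$ with $u\circ(A\rightarrowtail B)=v\circ(A\twoheadrightarrow C)$, the map $u$ must send $T$ to the basepoint of $X$ and so factors uniquely through $E_B\to E_B/T=E_D$; strongness of the induced morphism $D\to X$ follows from the description of the flats of $B/T$ as the sets $G\setminus T$ for $G$ a flat of $B$ containing $\operatorname{cl}_B(T)$, together with the fact that the preimage in $B$ of any flat of $X$ contains $T$. Dually, a morphism into $B$ whose composite with $B\twoheadrightarrow D$ factors through $C\rightarrowtail D$ lands in $E_A=S\subseteq E_B$, hence factors uniquely through $A=B|_S$, and is strong because the flats of $B|_S$ are the intersections with $S$ of the flats of $B$. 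The remaining ``pushout $\Leftrightarrow$ pullback'' axiom then follows from uniqueness of (co)limits. Finiteness is immediate: $\Hom_{\MS}(M,N)$ is a set of functions between finite sets, and any extension $\ses{N}{R}{M}$ has $|E_R|=|E_N|+|E_M|-1$, so only finitely many arise up to equivalence; thus $\MS$ is finitary.

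I expect the bicartesian-square axioms to be the main obstacle — concretely, confirming that the functions handed back by the universal properties are genuinely strong maps. That is the one point where the combinatorics of matroids (the flats of a restriction, of a contraction, and their interaction) is used essentially; everything else is bookkeeping with pointed functions. An alternative, more conceptual route — carried out in \S\ref{MS_Bmodules} — is to identify $\MS$ with a category of modules over the Boolean semifield $\BB$, under which restrictions and contractions become sub- and quotient-modules and the proto-exact axioms become more transparent.
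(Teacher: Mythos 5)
Your proposal is correct and follows essentially the same route as the paper's first proof in \S\ref{MS_strong}: reduce Properties 3--5 to the single standard biCartesian square formed by a restriction $B|S$ and a contraction by $T\subseteq S$ (the paper's Lemma \ref{simple bicartesian}), verify the universal properties on underlying pointed sets while checking strongness via the flats of restrictions and contractions, transport general admissible monos/epis to this square along isomorphisms, and deduce ``Cartesian iff coCartesian'' from uniqueness of (co)limits. Your brief finitariness argument and the closing remark about the $\BB$-module reformulation likewise match the paper's treatment.
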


We give two proofs of this theorem, which reduces to verifying the existence of certain special pushouts/pull-backs in $\MS$. The first is written in the "classical" language of matroids, while the second uses a description of $\MS$ in terms of embedded semi-modules over the Boolean semiring $\mathbb{B}$ given in \cite{CGM}. We are hopeful that a description of valuated matroids along the lines of \cite{CGM} can be given, and that our proof should  generalize to that situation as well. 

We proceed to define and study the algebraic K-theory of $\MS$ via Definition \ref{kth_def}. $\MS$ has an exact forgetful functor to $\Set$ possessing an exact left adjoint sending $E \in \Set$ to the "free pointed matroid on $E$" . These can be used to relate the $K_*(\Set)$ and $K_*(\MS)$. We show:

\begin{nothmb} [Theorem \ref{theorem: K_0}] \label{K0_mat}
$K_0 (\MS) \simeq \mathbb{Z} \oplus \mathbb{Z}$
\end{nothmb}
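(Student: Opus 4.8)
The plan is to produce an explicit isomorphism $\phi = (n, r) \colon K_0(\MS) \xrightarrow{\ \sim\ } \mathbb{Z} \oplus \mathbb{Z}$ whose two coordinates are the number of non-basepoint ground set elements and the matroid rank. Recall that for a proto-exact category $\C$ one has $K_0(\C) = \mathbb{Z}[\on{Iso}(\C)] / \langle\, [B] - [A] - [C] \,\rangle$, the relations running over all admissible short exact sequences $A \hookrightarrow B \twoheadrightarrow C$. By Theorem A the admissible monos are restrictions and the admissible epis are contractions, so every admissible short exact sequence in $\MS$ is isomorphic to one of the form $M|_T \hookrightarrow M \twoheadrightarrow M/(T \setminus \{*\})$ for a pointed subset $T \subseteq E_M$. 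Setting $S = T \setminus \{*\}$, the three non-basepoint ground sets have sizes $|S|$, $|E_M| - 1$, and $|E_M| - 1 - |S|$, so $n(M) := |E_M| - 1$ is additive; and since the basepoint is a loop, $r(M|_T) = r_M(S)$ while $r(M/S) = r_M(E_M) - r_M(S)$, so $r(M) := \on{rank}(M)$ is additive as well. Hence $n$ and $r$ descend to the asserted homomorphism $\phi\colon K_0(\MS) \to \mathbb{Z} \oplus \mathbb{Z}$.

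Next I would show that $K_0(\MS)$ is generated by the classes of the two pointed matroids supported on a two-element ground set: $L$, in which both elements are loops, and $P$, in which the basepoint is a loop and the other element a coloop. Given any pointed matroid $M$ with non-basepoint elements $e_1, \dots, e_n$, the flag of restrictions $M|_{\{*\}} \subseteq M|_{\{*, e_1\}} \subseteq \cdots \subseteq M|_{\{*, e_1, \dots, e_n\}} = M$ consists of admissible monomorphisms, and iterating the defining relation of $K_0$ (starting from the zero object $M|_{\{*\}}$) yields $[M] = \sum_{i=1}^{n} [Q_i]$ with $Q_i := M|_{\{*, e_1, \dots, e_i\}} / \{e_1, \dots, e_{i-1}\}$. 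Each $Q_i$ is a pointed matroid on the two-element set $\{*, e_i\}$, hence isomorphic to $L$ or to $P$ according to whether $r_M(\{e_1, \dots, e_i\}) = r_M(\{e_1, \dots, e_{i-1}\})$ or not; summing, exactly $r(M)$ of the $Q_i$ are copies of $P$, so $[M] = (n(M) - r(M))[L] + r(M)[P]$.

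It then remains to observe that $\phi([L]) = (1, 0)$ and $\phi([P]) = (1, 1)$ form a $\mathbb{Z}$-basis of $\mathbb{Z} \oplus \mathbb{Z}$: this makes $\phi$ surjective, and since $K_0(\MS) = \mathbb{Z}[L] + \mathbb{Z}[P]$ with $\phi(a[L] + b[P]) = (a + b, b)$ vanishing only for $a = b = 0$, it makes $\phi$ injective too, so $\phi$ is an isomorphism. I expect the first paragraph to carry the real weight: identifying precisely the admissible short exact sequences — in particular that the cokernel of a restriction $M|_T \hookrightarrow M$ is the contraction $M / (T \setminus \{*\})$ — and verifying additivity of the rank is where the matroid axioms genuinely enter, and it leans on the structural results behind Theorem A. The flag/telescoping argument and the concluding linear algebra are then routine bookkeeping.
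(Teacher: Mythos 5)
Your proof is correct and follows essentially the same route as the paper: both express $[M]$ as a sum of classes of pointed matroids with a single non-basepoint element (a loop $\mathbf{b}$ or a coloop $\mathbf{a}$) via admissible sequences of the form $\ses{M\vert S}{M}{M/S}$, and both use additivity of rank and of non-basepoint ground-set size to see that these two classes are independent. The only cosmetic differences are that you telescope along a flag of restrictions rather than iterating $[M]=[M\vert e]+[M/e]$ one element at a time, and you take the coordinates $(\vert \wt{E}_M\vert, rk(M))$ instead of the paper's $(rk(M), \vert E_M\vert - rk(M))$.
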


and

\begin{nothmc} [Theorem \ref{theomre: K_i injection}] \label{K_inj}
There are injective group homomorphisms $$\pi^s_n(\mathbb{S}) \simeq K_n (\Set) \hookrightarrow K_n (\MS)$$ for all $n \geq 0$. 
\end{nothmc}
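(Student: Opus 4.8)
The plan is to exhibit $K_\bullet(\Set)$ as a retract of $K_\bullet(\MS)$ by means of an adjoint pair of exact functors relating the two categories, and then to identify the source with $\pi^s_\bullet(\mathbb{S})$ via Theorem~\ref{Kset}. First I would invoke the functoriality of the Waldhausen $\SS$-construction from \cite{DK}: an exact functor $\Phi\colon \C \to \mathcal{D}$ of proto-exact categories induces a morphism of simplicial groupoids $\SS_\bullet\C \to \SS_\bullet\mathcal{D}$, hence — after geometric realization and passage to homotopy groups — homomorphisms $\Phi_*\colon K_n(\C) \to K_n(\mathcal{D})$ for all $n$, compatibly with composition. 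The only input needed is that $\Phi$ carries $\mathfrak{M}$ to $\mathfrak{M}$, $\mathfrak{E}$ to $\mathfrak{E}$, and the bicartesian squares occurring in the proto-exact axioms to bicartesian squares.

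Next I would pin down the two functors. Let $U\colon \MS \to \Set$ be the forgetful functor sending a pointed matroid to its pointed ground set, and let $F\colon \Set \to \MS$ send a pointed set $E$ to the free pointed matroid on $E$ (ground set $E$, basepoint a loop, every subset avoiding the basepoint independent); by the discussion preceding the statement, $F$ is left adjoint to $U$. One checks directly that $U$ is exact: a matroid restriction has underlying map a pointed injection (an admissible mono in $\Set$), a matroid contraction has underlying map a pointed surjection which is an isomorphism away from the basepoint (an admissible epi in $\Set$), and the distinguished squares of Theorem~A are preserved since on ground sets they are the evident pushout/pullback squares of pointed sets. Likewise $F$ is exact: for a pointed injection $E \hookrightarrow E'$ one has a canonical identification $F(E) \cong F(E')|_{E}$, so $F$ sends admissible monos to restrictions; for a pointed surjection $E \twoheadrightarrow E''$ collapsing a subset $A$ onto the basepoint, the set $A$ consists of coloops of $F(E)$, hence is independent, so $F(E)/A \cong F(E'')$ and $F$ sends admissible epis to contractions; and the relevant squares again map to the relevant squares.

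The argument then closes quickly. Since forgetting the matroid structure of the free matroid on $E$ returns $E$, naturally in $E$, we have $U \circ F \cong \id_{\Set}$ (in fact on the nose, as the unit of the adjunction is the identity). By functoriality of $K$-theory, $U_* \circ F_* = (U\circ F)_* = \id$ on $K_n(\Set)$ for every $n$, so $F_*\colon K_n(\Set) \to K_n(\MS)$ is a split injection. Precomposing with the isomorphism $\pi^s_n(\mathbb{S}) \xrightarrow{\sim} K_n(\Set)$ of Theorem~\ref{Kset} yields the asserted injective homomorphisms $\pi^s_n(\mathbb{S}) \hookrightarrow K_n(\MS)$.

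The main obstacle is the verification that $U$ and $F$ are genuinely \emph{exact} functors of proto-exact categories — in particular that they preserve the bicartesian squares demanded by the proto-exact axioms, not merely the classes of admissible monomorphisms and epimorphisms. This is where one must combine the explicit pushout/pullback descriptions produced in the proof of Theorem~A with careful bookkeeping of the pointed conventions: the basepoint functioning as a loop, contractions restricting to isomorphisms away from the basepoint on underlying sets, and the compatibility of "restrict then forget" versus "forget then take the corresponding subset". Once exactness of both functors is in hand, the retract argument above is formal.
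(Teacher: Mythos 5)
Your proposal is correct and follows essentially the same route as the paper: the paper also uses the forgetful functor $\FF$ and its exact left adjoint $\GG$ (the free pointed matroid functor) with $\FF\circ\GG=\id_{\Set}$, together with functoriality of $K$-theory under exact functors and Theorem~\ref{Kset}, to split $K_n(\Set)$ off of $K_n(\MS)$. Your explicit verification that both functors preserve admissible monos, admissible epis, and the relevant biCartesian squares simply fills in details the paper leaves implicit (it asserts exactness of $\FF$ in a remark and of $\GG$ in the introduction).
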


This shows in particular that $K_n(\MS)$ is in general non-trivial for $n \geq 0$. 
\medskip

As a corollary of Theorem A, we are able to define the  Hall algebra $\H_{\MS}$.  The product $[M] \star [N]$ in this algebra enumerates all matroids $[L]$ such that $L \vert S \simeq M$ and $L/S \simeq N$ for some subset $S \subseteq E_L$. $\H_{\MS}$ turns out to be dual to combinatorial Hopf algebra introduced by Schmitt in \cite{Schmitt}, called the \emph{matroid-minor Hopf algebra}. We obtain the following:

\begin{nothmd} [Theorem \ref{theorem: hopf and hall}]\label{Hall_alg}
$\H_{\MS}$ has the structure of a graded, connected, co-commutative Hopf algebra, dual to Schmitt's matroid-minor Hopf algebra. 
\end{nothmd}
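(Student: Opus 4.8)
The plan is to realize $\H_{\MS}$ explicitly as the graded dual of Schmitt's matroid-minor Hopf algebra, and then to read off each of the asserted properties from the (more transparent) corresponding property of the latter. First, set up $\H_{\MS}$ itself: by Theorem A together with the Dyckerhoff-Kapranov construction, $\H_{\MS}=\mathbb{Q}_c[\on{Iso}(\MS)]$ carries the associative, unital Hall product \eqref{deltamult}. Grade it by declaring $\delta_{[R]}$ to have degree $|E_R|-1$, the number of non-basepoint ground-set elements. Since by Theorem A every admissible mono into $R$ is a restriction $R|_A$ and every admissible epi out of $R$ is a contraction $R/A$ for a subset $A\subseteq E_R$ containing the basepoint, and since the degrees of $R|_A$ and $R/A$ add up to that of $R$, the Hall product preserves the grading. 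The degree-zero component is spanned by $\delta_{[\mathbf{0}]}$ for $\mathbf{0}$ the zero object of $\MS$ (the matroid on the one-point set), which is the unit; hence $\H_{\MS}$ is graded and connected, and since there are only finitely many isomorphism classes of matroids of each size it is finite-dimensional in each degree.

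Next recall Schmitt's Hopf algebra, which I denote $S$, from \cite{Schmitt}; under the correspondence between ordinary and pointed matroids (delete or adjoin the basepoint) we may view it as the span of $\on{Iso}(\MS)$ graded by $|E_R|-1$, with product $[M]\cdot[N]=[M\oplus N]$, counit extracting the coefficient of $[\mathbf{0}]$, and coproduct $\Delta[R]=\sum_{A}[R|_A]\otimes[R/A]$, the sum over subsets $A\subseteq E_R$ containing the basepoint. Schmitt shows $S$ is a graded, connected, commutative Hopf algebra; being graded connected it has finite-dimensional graded pieces and the canonical antipode. Consequently its graded dual $S^\circ$ is again a graded, connected Hopf algebra with finite-dimensional graded pieces, and $S^\circ$ is co-commutative because $S$ is commutative.

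It remains to identify $\H_{\MS}$ with $S^\circ$. Use the perfect graded pairing with $\langle\delta_{[M]},[N]\rangle=\delta_{[M],[N]}$. By Theorem A a subobject of $R$ in $\MS$ is precisely a subset $A\subseteq E_R$ containing the basepoint: distinct subsets give admissible monos with distinct images, hence distinct subobjects, even when the corresponding restrictions happen to be abstractly isomorphic; and such a subobject has underlying object $R|_A$ and quotient $R/A$. Thus the structure constant of \eqref{deltamult} is literally $\mathbf{P}^R_{M,N}=\#\{A\subseteq E_R: R|_A\simeq N,\ R/A\simeq M\}$, which is exactly the coefficient of $[N]\otimes[M]$ in $\Delta[R]$. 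So, up to the flip of tensor factors discussed below, the Hall product is the transpose of Schmitt's coproduct. Equipping $\H_{\MS}$ with the coproduct transpose to Schmitt's product — explicitly $\Delta_{\H}\delta_{[R]}=\sum\delta_{[M]}\otimes\delta_{[N]}$, summed over ordered pairs of isomorphism classes with $M\oplus N\simeq R$, the natural "split $R$ into a direct sum" coproduct, whose co-commutativity is visible from the symmetry of the condition $M\oplus N\simeq R$ — then exhibits $(\H_{\MS},\star,\Delta_{\H})$ as $S^\circ$. It is therefore a graded, connected, co-commutative Hopf algebra dual to $S$, which is the claim; coassociativity, the bialgebra axioms, and the antipode come from those of $S$.

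The step I expect to require the most care — and would write out in full — is the identity $\mathbf{P}^R_{M,N}=\#\{A\subseteq E_R: R|_A\simeq N,\ R/A\simeq M\}$: this rests on carefully unwinding the proto-exact structure supplied by Theorem A (that subobjects of $R$ are exactly the subsets of $E_R$ containing the basepoint, that the quotient of $R$ by $R|_A$ is the contraction $R/A$, and that the groupoid-cardinality definition of the Hall product collapses to this naïve count because the relevant automorphism groupoids become discrete once $A$ is fixed). A second, purely formal, point is the op/cop bookkeeping: the computation above matches $\star$ with the transpose of Schmitt's coproduct only after transposing the two tensor factors, so a priori one obtains $\H_{\MS}\cong(S^\circ)^{\mathrm{op}}$; but since $S$ is commutative its antipode gives a Hopf isomorphism $S\cong S^{\mathrm{cop}}$, dually $(S^\circ)^{\mathrm{op}}\cong S^\circ$, so this is harmless. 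Everything else — gradedness, connectedness, finite type, co-commutativity, and the antipode — is immediate or a formal consequence of graded-dualizing $S$.
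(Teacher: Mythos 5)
Your proposal is correct, and its core step is the same comparison the paper makes: identifying the Hall structure constants with the counts $\#\{S\subseteq\wt{E}_R : R|S\simeq N,\ R/S\simeq M\}$ (using that admissible subobjects of $R$ are exactly the subsets of $\wt{E}_R$, with quotient the contraction $R/S$), which are precisely the coefficients of Schmitt's coproduct, so that the Hall product is the opposite of the convolution product on the dual. Where you diverge is in how the Hopf axioms and the op-discrepancy are handled. The paper obtains the Hopf structure on $\H_{\MS}$ intrinsically: it invokes Theorem \ref{Hall_theorem}, i.e.\ the fact that $\MS$ has direct sums whose admissible subobjects split as $A'\oplus B'$, so that $\Delta(f)([A],[B])=f([A\oplus B])$ is a coproduct compatible with $\bullet$, and then Milnor--Moore gives a graded connected cocommutative Hopf algebra $\H_{\MS}\simeq \mathbf{U}(\text{primitives})$; the mismatch $\bullet=\diamond^{op}$ with the explicit description of $k[\Miso]^*$ from \cite{CS,KRS} is then absorbed by the algebra anti-automorphism of an enveloping algebra fixing the coproduct. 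You instead transport everything (bialgebra compatibility, antipode, gradedness, connectedness, cocommutativity) from Schmitt's algebra $S$ by graded dualization, and repair the twist via the dual of the antipode of the commutative Hopf algebra $S$, using $(S^{\circ})^{op}=(S^{cop})^{\circ}\simeq S^{\circ}$; this is a valid alternative. Your route is more economical, never needing the direct-sum subobject condition or Milnor--Moore (and you are in fact more careful than the paper in working with the graded dual rather than the full linear dual); the paper's intrinsic route buys in addition the enveloping-algebra description $\H_{\MS(\mathcal{M})}\simeq \mathbf{U}(\delta_{[M]})$ over connected pointed matroids, which is part of Theorem \ref{theorem: hopf and hall} though not of the statement you were asked to prove. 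The one substantive computation you defer -- the identification of the structure constants -- is flagged accurately and your outline of it (distinct subsets give distinct subobjects even when abstractly isomorphic, restrictions/contractions exhaust admissible monos/epis) is exactly what is needed; your aside about groupoid cardinalities is unnecessary, since the paper's finitary Hall product is already defined as a sum over admissible subobjects.
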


\subsection{Outline of this paper}

In section \ref{proto_hall} we recall the basics of proto-exact categories and their Hall algebras following \cite{DK}. Basic notions regarding pointed matroids are laid out in section \ref{matroids_intro}. Sections \ref{MS_strong} and \ref{MS_Bmodules} contain two proofs of Theorem A - one in classical matroid-theoretic language and one using the language of $\mathbb{B}$-modules introduced in \cite{CGM}. In section \ref{Kth} we define the K-theory of $\MS$ and prove Theorems B and C. Finally, in section \ref{MM_Hopf} we relate the Hall algebra $\H_{\MS}$ to Schmitt's matroid-minor Hopf algebra, proving Theorem D.

\vspace{0.9cm}

\noindent \textbf{Acknowledgments}\\

The third author is grateful to Tobias Dyckerhoff for explanations regarding the paper \cite{DK} and for the support of a Simons Foundation Collaboration Grant. 

\section{Proto-exact categories and their Hall algebras} \label{proto_hall}

In this section, we recall the notion of a proto-exact category $\E$ following \cite{DK}, where we direct the interested reader for details and proofs. This is a generalization of a Quillen exact category that allows $\E$ to be non-additive, and yet provides enough structure to define an associative Hall algebra by counting certain distinguished exact sequences in $\E$. 
As usual, we denote monomorphisms in $\E$ by $\hookrightarrow$ and epimorphisms by $\twoheadrightarrow$.

 A commutative square
\begin{equation} \label{comm_square}
			\xymatrix{
				A \ar[r]^{i} \ar[d]^{j} & B\ar[d]^{j'} \\
				A' \ar[r]^{i'} &  B'.
			}
\end{equation}
is called biCartesian if it is both Cartesian and co-Cartesian. 

\begin{mydef}\label{proto_exact} A {\em proto-exact} category is a category $\E$ equipped with two classes of morphisms $\MM$, $\EE$, called {\em admissible monomorphisms} and {\em admissible epimorphisms} respectively. The triple $(\E, \MM, \EE)$ is required to satisfy the following properties:
	\begin{enumerate}
		\item The category $\E$ has a zero object $0$. Any morphism $0 \rightarrow A$ is in $\MM$, and any morphism $A \rightarrow 0$ is in $\EE$.
		\item The classes $\MM, \EE$ are closed under composition and contain all isomorphisms.
		\item A commutative square \eqref{comm_square} in $\E$ with $i,i' \in \MM$ and $j, j' \in \EE$ is Cartesian iff it is co-Cartesian. 
		\item Every diagram in $\E$ 
		\[ \xymatrix{ A' \ar@{^{(}->}[r]^{i'} & B' & \ar@{->>}[l]_{j'} B} \]
		with $i' \in \MM$ and $j' \in \EE$ can be completed to a biCartesian square (\ref{comm_square}) with $i \in \MM$ and $j \in \EE$. 
		\item  Every diagram in $\E$ 
		\[ \xymatrix{ A'  & \ar@{->>}[l]_{j} A \ar@{^{(}->}[r]^{i}  &B} \]
		with $i \in \MM$ and $j \in \EE$ can be completed to a biCartesian square \eqref{comm_square} with $i' \in \MM$ and $j' \in \EE$. 		
\end{enumerate}
\end{mydef}

A biCartesian square of the form with the horizontal maps are in $\MM$ and the vertical maps are in $\EE$
		\[
			\xymatrix{
				A \ar@{^{(}->}[r] \ar@{->>}[d] & B\ar@{->>}[d] \\
				0 \ar@{^{(}->}[r] &  C
			}
		\]
		is called an {\em admissible short exact sequence}, or, an {\em admissible extension of $C$ by $A$}, and will also be written as
		\begin{equation} \label{ses}
		\ses{A}{B}{C}
		\end{equation}
		We will denote the object $C$ (unique up to a unique isomorphism) by $B/A$. A functor $F: \C \mapsto \mathcal{D}$ between proto-exact categories is \emph{exact} if it preserves admissible short exact sequences. 
		Two extensions $A \hookrightarrow B \twoheadrightarrow C$
and $A \hookrightarrow B' \twoheadrightarrow C$ of $C$ by $A$ are called equivalent if there exists a
commutative diagram
\[
	\xymatrix{ A \ar[d]_{id} \ar@{^{(}->}[r] & B\ar[d]^{\cong} \ar@{->>}[r]& C \ar[d]^{id}\\
	A\ar@{^{(}->}[r] & B' \ar@{->>}[r]& C.}
\]
and the set of equivalence classes of such will be denoted by  $\on{Ext}_{\E}(C,A)$. Two admissible monomorphisms $i_1: A \hookrightarrow B$ and $i_2 : A' \hookrightarrow B$ in $\MM$ are \emph{isomorphic} if there exists an isomorphism $f: A \rightarrow A'$ such that $i_1= i_2 \circ f$. We call the isomorphism classes in $\MM$ \emph{admissible subobjects}.

\begin{mydef}\label{defi:finitary} A proto-exact category $\E$ is called {\em finitary} if, for every pair of objects
	$A$,$B$, the sets $\on{Hom}_{\E}(A,B)$ and $\on{Ext}_{\E}(A,B)$ have finite cardinality.
\end{mydef}

\begin{myeg} \label{example: proto-exact}
	\begin{enumerate}
		\item Any Quillen exact category is proto-exact, with the same exact structure. In particular, any Abelian category $\E$ is proto-exact with $\MM$ all monomorphisms and $\EE$ all epimorphisms respectively. The category $\on{Rep}(Q, \mathbb{F}_q)$ of representations of a quiver $Q$ over a finite field $\mathbb{F}_q$, and $Coh(X / \mathbb{F}_q)$ - the category of coherent sheaves on a smooth projective variety over $\mathbb{F}_q$ are both finitary Abelian.		
		\item The simplest example of a non-additive proto-exact category is the category $\Set$ whose objects are pointed sets with pointed maps as morphisms. Here $\MM$ consists of all pointed injections, and $\EE$ all pointed surjections $p: (S,*) \rightarrow (T,*)$ such that $p \vert_{S \backslash p^{-1}(*)}$ is injective. The full subcategory $\Set^{fin}$ of finite pointed sets is finitary. 
		\end{enumerate}
\end{myeg}

\subsection{The Hall algebra} \label{Hall_alg_sec}

Let $\E$ be a finitary proto-exact category, and $k$ a field of characteristic zero. Define the Hall algebra $\H_{\E}$ over $k$ as
\[
\H_{\E} := \{ f: \on{Iso}(\E) \rightarrow k \vert f \textrm{ has finite support } \},
\]
where $\on{Iso}(\E)$ denotes the set of isomorphism classes in $\E$. $\H_{\E}$ is an associative $k$--algebra under the convolution product
\begin{equation} \label{hall_product}
f \bullet g ([B]) := \sum_{A \subseteq B} f([B/A])g([A]),
\end{equation}
where the summation $\sum_{A \subseteq B}$ is taken over isomorphism classes of admissible sub-objects $i: A \hookrightarrow B, i \in \MM$, and
$[A]$ etc. denotes the isomorphism class of $A$ in $\E$. Note that this sum is finite, since $
\E$ is assumed finitary.  $\H_{\E}$ has a basis consisting of delta-functions $\delta_{[B]}$, $[B] \in \on{Iso}(\E)$, where
\[
\delta_{[B]} ([A]) =\begin{cases} 
      1 & A \simeq B \\
      0 & \textrm{ otherwise }.
   \end{cases}
\]
The multiplicative unit of $\H_{\E}$ is given by $\delta_{[0]}$.
The structure constants of this basis are given by
\[
\delta_{[A]} \bullet \delta_{[C]} =  \sum_{[B] \in \on{Iso}(\E)} g^B_{A,C} \delta_{[B]},
\]
where
\[
g^B_{A,C} = \#  \{ D \subseteq B \vert D \simeq C, B/D \simeq A \} .
\]
In other words, $g^B_{A,C}$ counts the number of admissible subobjects $D$ of $B$ isomorphic to $C$ such that $B/D$ is isomorphic to $A$. 
The \emph{Grothendieck group} of $\E$, denoted $K_0(\E)$ is defined as the free group on $\on{Iso}(\E)$ modulo the relations $[B]=[A][C]$ for every admissible short exact sequence
(\ref{ses}). When $\E$ admits split admissible short exact sequences of the form $$ \ses{A}{A\oplus B}{B}$$ $K_0 (\E)$ is Abelian, and has the familiar description
\[
K_0 (\E) = \mathbb{Z} [\on{Iso}(\E) ] / \sim,
\]
where $\sim$ is generated by the relations $[B] = [A]+[C]$ for all admissible short exact sequences (\ref{ses}). We denote by $K_0(\E)^+ \subseteq K_0 (\E)$ the sub-semigroup generated by the effective classes. $\H_{\E}$ is naturally graded by $K_0 (\E)^+$, with $deg(\delta_{[B]}) = [B] \in K_0 (\E)^+$.

Whether $\H_{\E}$ carries a co-product making it into a bialgebra depends on further properties of $\E$. For instance, if $\E$ is finitary, Abelian, linear over $\mathbb{F}_q$ and hereditary, $\H_{\E}$ carries the so-called \emph{Green's co-product} (see \cite{S}). In this paper, we will be concerned with situations where $\E$ is not additive, and where the following alternative construction applies. Suppose that $\E$ is equipped with finite direct sums, and that the only admissible sub-objects of $A \oplus B$ are of the form $A' \oplus B'$, where $A' \subseteq A, B' \subseteq B$. In this case, we may define
\[
\Delta: \H_{\E} \rightarrow \H_{\E} \otimes \H_{\E}
\]
by
\begin{equation} \label{coprod-def}
\Delta(f)([A], [B]) = f([A \oplus B]).
\end{equation}
$\Delta$ is easily seen to be compatible with the associative product  $\bullet$ on $\H_{\E}$. It is clear from (\ref{coprod-def}) that $\Delta$ is co-commutative, and that the subspace of primitive elements of $\H_{\E}$ is spanned by $\{ \delta_{[B]} \}$, where $B$ is indecomposable (i.e. cannot be written as a non-trivial direct sum of sub-objects). By the Milnor-Moore theorem, any graded connected co-commutative bialgebra is a Hopf algebra, isomorphic to the enveloping algebra of its primitive elements. To summarize, we have:

\begin{mythm}\label{Hall_theorem}
Let $\E$ be a finitary proto-exact category $\E$ satisfying the condition $$C \subseteq A \oplus B \Rightarrow C \simeq A' \oplus B', A' \subseteq A, B' \subseteq B.$$ Then $\H_{\E}$ has the structure of a $K^+_0(\E)$-graded, connected, co-commutative Hopf algebra over $k$ with co-product (\ref{coprod-def}). $\H_{\E} \simeq \mathbf{U}(\delta_{{[B]}})$, where $B$ is indecomposable.
\end{mythm}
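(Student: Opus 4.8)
The plan is to verify directly that $(\H_\E, \bullet, \Delta)$ is a $K^+_0(\E)$-graded, connected, co-commutative bialgebra over $k$ and then to invoke the Milnor--Moore theorem, as anticipated just before the statement. For the coalgebra structure: if $f$ has finite support then $([A],[B]) \mapsto f([A \oplus B])$ is supported on pairs with $[A \oplus B] \in \on{supp}(f)$, and for each iso class $R$ only finitely many pairs $([A],[B])$ satisfy $A \oplus B \cong R$ -- here $A$ must be the class of an admissible subobject of $R$, of which there are finitely many since $\E$ is finitary -- so $\Delta(f) \in \H_\E \otimes \H_\E$, with $\Delta(\delta_{[R]}) = \sum_{X \oplus Y \cong R} \delta_{[X]} \otimes \delta_{[Y]}$ (sum over pairs of iso classes). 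Co-associativity and co-commutativity follow from associativity and commutativity of $\oplus$ up to isomorphism, $\epsilon(f) := f([0])$ is a counit since $0 \oplus B \cong B$, and $\epsilon$ is an algebra map because $g^{0}_{A,C}$ vanishes unless $A \cong C \cong 0$.

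The crux is that $\Delta$ is an algebra map. On basis elements,
\[
\Delta(\delta_{[A]} \bullet \delta_{[C]}) = \sum_{[X],[Y]} g^{X \oplus Y}_{A,C}\, \delta_{[X]} \otimes \delta_{[Y]}, \qquad
\Delta(\delta_{[A]}) \bullet \Delta(\delta_{[C]}) = \sum_{[X],[Y]} \Big( \sum g^{X}_{A_1,C_1}\, g^{Y}_{A_2,C_2} \Big) \delta_{[X]} \otimes \delta_{[Y]},
\]
the inner sum over pairs $(A_1,A_2)$, $(C_1,C_2)$ with $A_1 \oplus A_2 \cong A$ and $C_1 \oplus C_2 \cong C$; so it is enough to prove the identity $g^{X \oplus Y}_{A,C} = \sum g^{X}_{A_1,C_1}\, g^{Y}_{A_2,C_2}$ for every ordered pair $X,Y$. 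The left side counts admissible subobjects $D \hookrightarrow X \oplus Y$ with $D \cong C$ and $(X \oplus Y)/D \cong A$. Using the direct-sum hypothesis in its refined subobject form -- every such $D$ equals $D_1 \oplus D_2$ for uniquely determined admissible subobjects $D_1 \hookrightarrow X$, $D_2 \hookrightarrow Y$, with $(X \oplus Y)/D \cong X/D_1 \oplus Y/D_2$, and every pair $(D_1,D_2)$ arises this way -- I would group the pairs $(D_1,D_2)$ by the iso classes $C_1 = [D_1]$, $C_2 = [D_2]$, $A_1 = [X/D_1]$, $A_2 = [Y/D_2]$ (which satisfy $C_1 \oplus C_2 \cong C$, $A_1 \oplus A_2 \cong A$); this turns the left side into the right side.

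To assemble the Hopf structure: it was already noted that $\H_\E$ is $K^+_0(\E)$-graded with $\deg \delta_{[B]} = [B]$, and $\Delta$ respects this since $[A \oplus B] = [A] + [B]$ in $K_0(\E)$; connectedness means the degree-zero component is $k\,\delta_{[0]}$, which holds because $[B] = 0$ forces $B \cong 0$ in the finitary settings of interest (in particular for $\MS$), and this in turn forces $\Delta(\delta_{[0]}) = \delta_{[0]} \otimes \delta_{[0]}$ by the counit axiom. So $\H_\E$ is a $K^+_0(\E)$-graded, connected, co-commutative bialgebra over a field of characteristic zero, and by the Milnor--Moore theorem it is a Hopf algebra isomorphic to $\mathbf{U}(\mathfrak{p})$, where $\mathfrak{p}$ is its space of primitives with the commutator bracket. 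Finally I identify $\mathfrak{p}$: since $\Delta(\delta_{[B]}) = \delta_{[B]} \otimes \delta_{[0]} + \delta_{[0]} \otimes \delta_{[B]} + (\text{terms } \delta_{[X]} \otimes \delta_{[Y]} \text{ with } X, Y \not\cong 0)$, the basis element $\delta_{[B]}$ is primitive iff $B$ is indecomposable; and a homogeneous combination $\sum_i c_i \delta_{[B_i]}$ can be primitive only when each $B_i$ with $c_i \ne 0$ is indecomposable, because the coefficient of a fixed $\delta_{[X]} \otimes \delta_{[Y]}$ in $\Delta(\delta_{[B_i]})$ picks out the single class $B_i \cong X \oplus Y$. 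Hence $\mathfrak{p} = \on{span}_k\{\delta_{[B]} : B \text{ indecomposable}\}$ and $\H_\E \simeq \mathbf{U}(\mathfrak{p})$, as claimed.

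The only real obstacle is the key identity $g^{X \oplus Y}_{A,C} = \sum g^{X}_{A_1,C_1}\, g^{Y}_{A_2,C_2}$, and specifically applying the direct-sum hypothesis at the level of subobjects and quotients rather than merely of isomorphism classes; I expect one must first upgrade the stated condition ``$C \subseteq A \oplus B \Rightarrow C \simeq A' \oplus B'$'' to that refined form using the bi-Cartesian axioms of Definition \ref{proto_exact}. The remaining verifications are formal, and the final step is the cited Milnor--Moore theorem.
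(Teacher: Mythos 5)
Your proposal follows essentially the same route as the paper, which simply defines $\Delta$ via direct sums, asserts its compatibility with $\bullet$ and the identification of primitives with the indecomposable classes, and invokes Milnor--Moore; you fill in the details (the identity $g^{X\oplus Y}_{A,C}=\sum g^{X}_{A_1,C_1}g^{Y}_{A_2,C_2}$, gradedness, connectedness, the primitive computation) that the paper leaves as "easily seen." The one step you flag---upgrading the subobject-splitting hypothesis to include the quotient identification $(X\oplus Y)/(D_1\oplus D_2)\simeq X/D_1\oplus Y/D_2$---is likewise left implicit in the paper, so your treatment is consistent with (indeed more explicit than) the paper's own argument.
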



\begin{myeg}
The category $\E=\Set$, also known as the category of  finite dimensional vector spaces over ``the field with one element $\mathbb{F}_1$'' has direct sums (defined as wedge sums), and satisfies the conditions of the theorem. In this case $\H_{\E} \simeq k[x]$, with $$ \Delta(x) = x\otimes 1 + 1 \otimes x. $$ 
\end{myeg}

\section{Matroids and Strong Maps} \label{matroids_intro}

This section provides a short introduction to the basic terminology
and results of matroid theory we use in this paper.  For more details,
the reader is encouraged to see \cite{Ox}.

Matroids are combinatorial abstractions of various properties of
linear independence among finitely many vectors in a vector space.  As
such, these objects admit a number of ``cryptomorphic''
axiomatizations; that is, there are a variety of ways of formulating
the axioms for matroids, any of which can be translated to any other.
We present one such formulation, namely the \emph{flats operator}.
Given a finite set $E$ (the \emph{ground set} of our matroid), a
function $\sigma:2^E\to2^E$ is a \emph{flats operator} for a matroid
on $E$ when it satisfies the following axioms:
\begin{enumerate}
\item[(F1)] For all $S\subseteq E$ we have $S\subseteq\sigma S$.
\item[(F2)] The map $\sigma$ is idempotent (i.e.\
  $\sigma=\sigma\sigma$).
\item[(F3)] For all $S\subseteq T\subseteq E$ we have
  $\sigma S\subseteq\sigma T$.
\item[(F4)] For all $S\subseteq E$ and all $x\in E$ and
  $y\in\sigma(S\cup\{x\})\setminus\sigma S$ we have
  $x\in\sigma(S\cup\{y\})$.
\end{enumerate}
Sets of the form $\sigma S$ for some $S\subseteq E$ are called
\emph{flats}.

\begin{rmk}
  Note that (F1), (F2), and (F3) are the axioms of a \emph{closure
    operator} on $E$.  Thus a flats operator on $E$ is a closure
  operator on $E$ satisfying (F4).
\end{rmk}

\begin{myeg}\label{flats examples}
  The following are the prototypical examples of flats
  operators.\footnote{Not all matroids arise in this way.  For more
    details, see \cite{Ox}.}
  \begin{enumerate}
  \item\label{vectorial flats} Let $E$ be a finite subset of a vector
    space $V$.  For all $S\subseteq E$ let
    $\sigma S=\langle S\rangle\cap E$ where $\langle S\rangle$ denotes
    the the subspace of $V$ spanned by $S$.  Then $\sigma$ is a flats
    operator on $E$.
  \item\label{graphic flats} Let $\Gamma$ be a graph with a finite edge
    set $E$.  For all $S\subseteq E$ let $\sigma S$ be the set all
    edges $e$ for which there is a path in $\Gamma$ connecting the
    ends of $e$ and using only edges of $S$.  Then $\sigma$ is a flats
    operator on $E$.
  \end{enumerate}
\end{myeg}

\begin{rmk}
  We note the following:
  \begin{enumerate}
  \item In light of Example \ref{flats examples}, this structure on $E$ abstracts the notion of ``span'' in a vector space.
  \item Given a flats operator $\sigma$ for a matroid $M$ on $E$, one
    can construct $\mathcal{F}(M)$, the set of all flats determined by
    $\sigma$.  The set of flats can also be used to axiomatize
    matroids ``cryptomorphically,'' but the details are unimportant
    for the results of this paper.  We use this fact implicitly below
    when we define the restriction and contraction.
  \end{enumerate}
\end{rmk}

In what follows, for a matroid $M$ on a finite set $E$, we let $\mathcal{F}(M)$ be the set of all flats of $M$. 

\begin{mydef}\label{restriction-contraction}
  Let $M$ be a matroid on $E$ and $S\subseteq E$.
  \begin{enumerate}
  \item The \emph{restriction} of $M$ to $S$ is the matroid $M|S$ on
    $S$ with flats
    \[
      \mathcal{F}(M|S)=\set{A\cap S}{A\in\mathcal{F}(M)}.
    \]
  \item The \emph{contraction} of $M$ by $S$ is the matroid $M/S$ on
    $E\setminus S$ with flats
    \[
      \mathcal{F}(M/S)
      =\set{A\setminus S}{S\subseteq A\in\mathcal{F}(M)}.
    \]
  \end{enumerate}
\end{mydef}

\begin{myeg}\label{restriction examples}
  Let $M$ be a matroid on a ground set $E$ and $S\subseteq E$.
  \begin{enumerate}
  \item\label{vectorial restriction} If $M$ is obtained from
    $E\subseteq V$ for a vector space $V$ as in Example \ref{flats
      examples}, then $M|S$ is obtained likewise
    from $S$ in $\langle S\rangle$.  In particular, matroid
    restriction corresponds to restriction to a subspace in the
    context of vector spaces.
  \item\label{graphic restriction} If $M$ is obtained from $E$ as the
    edge set of a graph $\Gamma$ as in Example \ref{flats
      examples}, then $M|S$ is obtained likewise
    from the graph $\Gamma|S$ with only the edges of $S$.
  \end{enumerate}
\end{myeg}

\begin{myeg}\label{contraction examples}
  Let $M$ be a matroid on a ground set $E$ and $S\subseteq E$.
  \begin{enumerate}
  \item\label{vectorial contraction} If $M$ is obtained from
    $E\subseteq V$ for a vector space $V$ as in Example \ref{flats
      examples}, then $M/S$ is obtained likewise
    from $E\setminus S$ in the vector space $V/\langle S\rangle$.  In
    particular, matroid contraction corresponds to the quotient by a
    subspace in the context of vector spaces.
  \item\label{graphic contraction} If $M$ is obtained from $E$ as the
    edge set of a graph $\Gamma$ as in Example \ref{flats
      examples}, then $M/S$ is obtained likewise
    from the contracted graph $\Gamma/S$ in which the edges of $S$ are
    contracted.
  \end{enumerate}
\end{myeg}

\begin{mydef}
The direct sum $M_1 \oplus M_2 $ of matroids $M_1$ and $M_2$ is defined as the matroid on the ground set $E_{M_1} \sqcup E_{M_2}$ with flats operator $\sigma_1 \sqcup \sigma_2$. In other words, the flats of $M_1 \oplus M_2$ are of the form $F_1 \sqcup F_2$ where $F_1 \in \mathcal{F}(M_1)$ and $F_2 \in \mathcal{F}(M_2)$.  
\end{mydef}

\begin{myeg}\label{direct_sum_exampless}
\begin{enumerate}
\item If $M_1, M_2$ are obtained from subsets $E_i \subseteq V_i, \; i=1,2$ for vector spaces $V_1, V_2$, then $M_1 \oplus M_2$ is obtained from $E_1 \sqcup E_2 \subseteq V_1 \oplus V_2$.
\item If $M_1, M_2$ are graphic, corresponding to graphs $\Gamma_1, \Gamma_2$, then $M_1 \oplus M_2$ is the graphic matroid of the disjoint union $\Gamma_1 \sqcup \Gamma_2$. 
\end{enumerate}
\end{myeg}

A \emph{loop} of a matroid is an element of the ground set which
belongs to the flat $\sigma\emptyset$.\footnote{In a matroid
  determined by a graph as in Example \ref{flats
    examples}.\ref{graphic flats}, this is exactly a loop of the
  graph.}  A \emph{pointed matroid} is a pair $(M,*_M)$ where $M$ is a
matroid and $*_M$ is a distinguished loop (a.k.a.\ the \emph{point}).
We think of the point as a zero element.  We will often suppress the
point in our notation and say that $M$ is a pointed matroid with a point
$*_M$.

\begin{rmk}
When dealing with pointed matroids $(M,*_M)$, we adopt the following conventions to simplify notation, and to ensure that the result of various operations is also a pointed matroid. Recall that the coproduct of pointed sets $(S_1, p_1), (S_2, p_2)$ is $S_1 \vee S_2$ - their wedge sum, in which the basepoints get identified. 
\begin{enumerate}
 \item We will denote by $\wt{E}_M$ the \emph{non-zero elements} in the ground set. Thus for a pointed matroid $(M,*_M)$ we have $E_M=\wt{E}_M \sqcup \{ *_M \}$.
 \item For $S \subseteq \wt{E}_M$,  $M \vert S$ will denote the restriction $M \vert (S \cup \{ *_M \})$. Since $*_M \notin S$, the definition of $M/S$ needs no modification. 
 \item If $(M_1, *_{M_1}), (M_2, *_{M_2})$ are pointed matroids, then $M_1 \oplus M_2$ is defined as the pointed matroid on the ground set $E_{M_1} \vee E_{M_2}$ with the flats operator $\sigma_1 \vee \sigma_2$. 
 \end{enumerate}
\end{rmk}


Having the notion that pointed matroids are generalizations of vector
spaces, one naturally seeks an appropriate translation of linear maps
to this setting.  

\begin{mydef}\label{def:pointed_strong}
Let $M$ and $N$ be pointed matroids on ground sets
$E_M$ and $E_N$ respectively and having flats $\mathcal{F}_M$ and
$\mathcal{F}_N$ respectively.  A \emph{(pointed) strong map} of pointed
matroids $f:M\to N$ is a function $f:E_M\to E_N$ such that
$f(*_M)=*_N$ and for all $A\in\mathcal{F}_N$ we have
$f^{-1}A\in\mathcal{F}_M$.
\end{mydef}

\begin{myeg}\label{example maps}
  The following are the prototypical examples of strong
  maps.\footnote{Not all strong maps between matroids arise in this
    way (even if the matroids themselves arise in this fashion).}
  \begin{enumerate}
  \item\label{vectorial maps} Let $M$ and $N$ be pointed matroids
    arising from $k$-vector spaces $V_M$ and $V_N$ respectively for
    some common field $k$ as in Example \ref{flats
      examples} above.  Every linear map
    $L:V_M\to V_N$ determines a (pointed) strong map $M\to N$.
  \item\label{graphic maps} Let $\Gamma$ and $\Lambda$ be graphs with
    distinguished loops $*_\Gamma$ and $*_\Lambda$, and let
    $(M_\Gamma,*_\Gamma)$ and $(M_\Lambda,*_\Lambda)$ be the pointed
    matroids arising from $\Gamma$ and $\Lambda$ as in Example
    \ref{flats examples} above.  Every graph
    morphism $f:\Gamma\to\Lambda$ preserving the points yields a
    pointed strong map $M_\Gamma\to M_\Lambda$.
  \end{enumerate}
\end{myeg}

The following is immediate:

\begin{pro}
  Pointed matroids and strong maps together form a category $\MS$.
\end{pro}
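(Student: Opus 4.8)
The plan is to verify the three defining conditions of a category for the structure $\MS$ whose objects are pointed matroids and whose morphisms are pointed strong maps, as defined in Definition~\ref{def:pointed_strong}. The only real content is to check that strong maps compose and that the identity is a strong map; the rest (associativity, unit laws) is inherited for free from the category of pointed sets, since strong maps are by definition certain set maps and composition of strong maps is composition of the underlying functions.

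First I would check that the identity function $\id_{E_M}:E_M\to E_M$ is a pointed strong map $M\to M$. It clearly fixes $*_M$, and for any flat $A\in\mathcal{F}_M$ we have $\id_{E_M}^{-1}A=A\in\mathcal{F}_M$, so (F)-preimage condition holds. Next I would check closure under composition: given pointed strong maps $f:M\to N$ and $g:N\to P$, the composite $g\circ f:E_M\to E_P$ satisfies $(g\circ f)(*_M)=g(*_N)=*_P$, and for any flat $A\in\mathcal{F}_P$ we have $(g\circ f)^{-1}A=f^{-1}(g^{-1}A)$; since $g^{-1}A\in\mathcal{F}_N$ because $g$ is strong, and then $f^{-1}(g^{-1}A)\in\mathcal{F}_M$ because $f$ is strong, we conclude $(g\circ f)^{-1}A\in\mathcal{F}_M$, so $g\circ f$ is a pointed strong map. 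Associativity of composition and the left/right unit laws then hold automatically because composition of strong maps is, on underlying sets, ordinary function composition, which is associative and unital with $\id$.

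There is essentially no obstacle here — the statement is labeled ``immediate'' in the text for good reason. The only point requiring any care is making sure the preimage-of-flats condition in Definition~\ref{def:pointed_strong} interacts correctly with composition, i.e. recognizing that $(g\circ f)^{-1}=f^{-1}\circ g^{-1}$ on subsets and that the two strong-map hypotheses chain together in the right order. Once that observation is made, the proof is a two-line verification, and I would present it in exactly that compressed form.
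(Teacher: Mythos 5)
Your verification is correct and is exactly the routine check (identities and composites of strong maps are strong maps, via $(g\circ f)^{-1}=f^{-1}\circ g^{-1}$ on flats) that the paper treats as immediate, offering no written proof. Nothing is missing.
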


There is a forgetful functor $$\mathbb{F}: \MS \mapsto \Set$$ which takes a pointed matroid $(M,*_M)$ to its underlying pointed ground set $(E_M, *_M)$.

\section{$\MS$ as a proto-exact category} \label{MS_strong}

Our goal in this section is to show that $\MS$ has the structure of a
proto-exact category in the sense of \cite{DK}. We begin by exhibiting
the classes of admissible monos/epis. 

\begin{mydef}
  Let $\MM$ consist of all strong maps in $\MS$ that can be factored
  as $$ N \overset{\sim}{\rightarrow} M \vert S \hookrightarrow M,$$
  and $\EE$ consists of all strong maps in $\MS$ that can be factored
  as $$ M \twoheadrightarrow M/S \overset{\sim}{\rightarrow} N $$ for
  some $S \subseteq E_M$.
\end{mydef}

Throughout this section, we let $\E:=\MS$. We show that $(\E,\MM,\EE)$
as above is a proto-exact category.
We first prove certain basic lemmas which will be used in what
follows. For the notational convenience, for a function $f:A \to B$
and $S\subseteq A$, we will write $fS$ for the image $f(S)$ whenever
there is no possible confusion. All matroids are assumed to be pointed unless otherwise stated. 


\begin{lem}\label{iso}
  Let $M$ (resp.~$N$) be a pointed matroid on $E_M$ (resp.~$E_N$). A
  function $f:E_M\to E_N$ is an isomorphism in $\E$ precisely when $f$
  is a pointed bijection satisfying
  \[
    A\in\mc{F}(M)\iff fA\in\mc{F}(N).
  \]
\end{lem}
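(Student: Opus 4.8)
The plan is to prove both implications of Lemma~\ref{iso} directly from the definitions of strong map, isomorphism, restriction, and contraction.

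\textbf{The easy direction.} First I would show that if $f:E_M\to E_N$ is a pointed bijection with $A\in\mc{F}(M)\iff fA\in\mc{F}(N)$, then $f$ is an isomorphism in $\E$. Since $f$ is a bijection, it has a set-theoretic inverse $g=f^{-1}:E_N\to E_M$, which is again a pointed map (it sends $*_N$ to $*_M$). The biconditional on flats says exactly that $f$ carries $\mc{F}(M)$ bijectively onto $\mc{F}(N)$, so for every $A\in\mc{F}(N)$ we have $f^{-1}A=gA\in\mc{F}(M)$, i.e.\ $f$ is a strong map; and symmetrically, for every $B\in\mc{F}(M)$ we have $g^{-1}B=fB\in\mc{F}(N)$, so $g$ is a strong map. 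Since $f\circ g=\id$ and $g\circ f=\id$ as functions, and these identity functions are strong maps (hence morphisms in $\E$), $f$ is an isomorphism in $\E$.

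\textbf{The converse.} Now suppose $f:M\to N$ is an isomorphism in $\E$, with two-sided inverse strong map $g:N\to M$. Then $f\circ g=\id_{E_N}$ and $g\circ f=\id_{E_M}$ as underlying functions, since the forgetful functor $\FFF:\MS\to\Set$ is faithful and $\id_{E_N}$, $\id_{E_M}$ are the identity morphisms; hence $f$ is a pointed bijection with $f^{-1}=g$. It remains to verify the flat biconditional. If $A\in\mc{F}(N)$ then $f^{-1}A\in\mc{F}(M)$ because $f$ is strong; this gives the $\Leftarrow$ direction after noting $f^{-1}A = gA$ is just $A$ pulled back, i.e.\ for $B := f^{-1}A$ we get $fB = A$, so $B\in\mc{F}(M)\Rightarrow fB\in\mc{F}(N)$ needs the reverse. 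So I would argue symmetrically: if $B\in\mc{F}(M)$ then, using that $g:N\to M$ is strong, $g^{-1}B\in\mc{F}(N)$; and since $g=f^{-1}$ is a bijection, $g^{-1}B=fB$, giving $fB\in\mc{F}(N)$. Combining the two strong-map conditions with the bijectivity of $f$ yields $A\in\mc{F}(M)\iff fA\in\mc{F}(N)$.

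\textbf{Main obstacle.} There is no deep obstacle here; the lemma is essentially an unwinding of definitions. The one point requiring a little care is the bookkeeping between $f^{-1}A$ (preimage) and $fA$ (image) when $f$ is a bijection: one must use bijectivity to identify $f^{-1}$ with the inverse function $g$ and rewrite $g^{-1}(B)$ as $f(B)$, so that the strong-map condition on $g$ becomes a statement about images under $f$. I would also remark that an isomorphism in $\E$, being invertible, is in particular both in $\MM$ and in $\EE$ (factoring through the trivial restriction/contraction $S=\wt{E}_M$ resp.\ $S=\emptyset$), which is consistent with axiom (2) of Definition~\ref{proto_exact}, though this observation is not needed for the proof itself.
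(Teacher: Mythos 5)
Your proof is correct and follows essentially the same route as the paper's: both directions are a direct unwinding of the definitions, using that the underlying map of the categorical inverse is the set-theoretic inverse and that the strong-map conditions on $f$ and $f^{-1}$, together with bijectivity, yield the flat biconditional. (One cosmetic remark: in the converse direction what you actually use is functoriality of $\FFF$, not its faithfulness, but since morphisms of $\MS$ are literally functions on ground sets this is immediate either way.)
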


\begin{proof}
  Let $f:E_M\to E_N$ be a set map.
	
  \textit{Sufficiency}: Suppose $f$ is an isomorphism of pointed
  matroids $M$ and $N$.  Applying the forgetful functor $\FF$ we see
  that $f$ is necessarily a pointed bijection; moreover, the
  underlying map of the inverse map $f^{-1}$ is the inverse of the
  underlying map.  Thus both $f$ and $f^{-1}$ are strong maps; given
  $fA\in\ \mc{F}(N)$ we have $f^{-1}(fA)=A\in\mc{F}(M)$, and similarly
  given $A\in\mc{F}(M)$ we have $(f^{-1})^{-1}A=fA\in\mc{F}(N)$.
	
  \textit{Necessity}: Suppose $f$ is a pointed bijection with
  $A\in\mc{F}(M)$ if and only if $fA\in\mc{F}(N)$.  Notice that we
  need only see $f$ and $f^{-1}$ are strong maps to conclude.  Given
  $F\in\mc{F}(N)$ we have $F=f(f^{-1}F)$, so $f^{-1}F\in\mc{F}(M)$ by
  our assumption; thus $f$ is a strong map.  Moreover, given
  $F\in\mc{F}(M)$ we have $(f^{-1})^{-1}F=fF\in\mc{F}(N)$; thus
  $f^{-1}$ is a strong map, and $f$ is an isomorphism.
\end{proof}




Notice that given subsets $T\subseteq S\subseteq E_M$ we have the
following:
\[
  \mc{F}((M|S)/T) =\set{(F\cap S)\setminus T}{T\subseteq
    F\in\mc{F}(M)}.
\]

\begin{lem}\label{iso restriction and contraction}
  Let $f:M\to N$ be an isomorphism in $\E$.
  \begin{enumerate}
  \item For all $\ast_M\in S\subseteq E_M$ the map $f|_S:M|S\to N|fS$
    is an isomorphism.
  \item For all $S\subseteq E_M\setminus\{\ast_M\}$ the map
    $f|_{E_M\setminus S}:M/S\to N/fS$ is an isomorphism.
  \end{enumerate}
\end{lem}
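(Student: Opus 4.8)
The plan is to prove both statements by directly verifying the criterion of Lemma \ref{iso}, using the isomorphism $f$ to transport flats. Since $f:M\to N$ is an isomorphism in $\E$, by Lemma \ref{iso} it is a pointed bijection satisfying $A\in\mc{F}(M)\iff fA\in\mc{F}(N)$. In both parts I will construct the relevant restricted/corestricted map, check it is a pointed bijection, and then check the flats-correspondence using the explicit descriptions of $\mc{F}(M|S)$, $\mc{F}(M/S)$ from Definition \ref{restriction-contraction}.

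For part (1): the map $f|_S$ has domain ground set $S$ (which contains $\ast_M$) and image ground set $fS$ (which contains $f(\ast_M)=\ast_N$, so it is again a pointed matroid's ground set). Since $f$ is a bijection, $f|_S:S\to fS$ is a pointed bijection. Now I must show $A\in\mc{F}(M|S)\iff (f|_S)A\in\mc{F}(N|fS)$. By definition $\mc{F}(M|S)=\set{F\cap S}{F\in\mc{F}(M)}$ and $\mc{F}(N|fS)=\set{G\cap fS}{G\in\mc{F}(N)}$. The key computation is that for $F\in\mc{F}(M)$ one has $f(F\cap S)=fF\cap fS$ (valid because $f$ is injective), and $fF$ ranges over exactly $\mc{F}(N)$ as $F$ ranges over $\mc{F}(M)$. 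This gives a bijection between the two sets of flats, matched up by $f|_S$, which is precisely the condition of Lemma \ref{iso} for $f|_S$.

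For part (2): here $S\subseteq E_M\setminus\{\ast_M\}$, and $M/S$ is the matroid on $E_M\setminus S$; note $\ast_M\in E_M\setminus S$, so $\ast_M$ is still a loop of $M/S$ (one should observe $\emptyset\subseteq\sigma\emptyset$ is a flat containing any $S$-free loop, or more simply note $\ast_M\in\sigma_{M/S}\emptyset$ since $\ast_M$ lies in every flat of $M$ and hence in every flat of $M/S$). The map $f|_{E_M\setminus S}:E_M\setminus S\to f(E_M\setminus S)=E_N\setminus fS$ is again a pointed bijection. For the flats: $\mc{F}(M/S)=\set{F\setminus S}{S\subseteq F\in\mc{F}(M)}$ and $\mc{F}(N/fS)=\set{G\setminus fS}{fS\subseteq G\in\mc{F}(N)}$. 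The key points are: first, $S\subseteq F$ if and only if $fS\subseteq fF$ (again using injectivity and surjectivity of $f$), so the indexing flats correspond; second, $f(F\setminus S)=fF\setminus fS$ for injective $f$. Together these show $f|_{E_M\setminus S}$ matches $\mc{F}(M/S)$ bijectively with $\mc{F}(N/fS)$, so by Lemma \ref{iso} it is an isomorphism.

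I do not expect a serious obstacle here; the argument is a routine set-theoretic transport of structure along a bijection, and the only points requiring a word of care are: (a) checking that the restricted/corestricted maps still send point to point and have the correct (pointed-matroid) ground sets, in particular that $\ast_M$ remains a loop after contraction by an $S$ not containing it; and (b) being careful with the elementary identities $f(X\cap Y)=fX\cap fY$ and $f(X\setminus Y)=fX\setminus fY$, which hold because $f$ is a bijection. If anything is "the hard part," it is simply making sure the indexing sets of flats are put in bijection correctly — i.e.\ that every flat of $N|fS$ (resp.\ $N/fS$) arises as the image of a flat of $M|S$ (resp.\ $M/S$) — but this follows immediately from surjectivity of $f$ together with the flats-correspondence for $f$ itself.
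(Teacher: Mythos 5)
Your proposal is correct and follows essentially the same route as the paper: both verify the criterion of Lemma \ref{iso} by transporting the explicit flat descriptions of Definition \ref{restriction-contraction} along the bijection $f$, using $f(F\cap S)=fF\cap fS$, $f(F\setminus S)=fF\setminus fS$, and $S\subseteq F\iff fS\subseteq fF$. The extra care you take about the basepoint and the surjectivity of the flat correspondence is implicit in the paper's chain of equivalences, so there is no substantive difference.
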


\begin{proof}
  Let $f:M\to N$ be an isomorphism in $\E$.  Note that
  $f(\ast_M)=\ast_N$, so we need not worry about contracting or
  deleting the point as $f$ is a bijection.
	
  \textit{Restriction}: We have the following string of equivalent
  statements via Definition \ref{restriction-contraction}:
  \begin{align*}
    A\in\mc{F}(M|S)
    \iff & \exists B\in\mc{F}(M)\textrm{ such that } A=B\cap S\\
    \iff & \exists B\in\mc{F}(M)\textrm{ such that } fA=fB\cap fS\\
    \iff & \exists B'\in\mc{F}(N)\textrm{ such that }fA=B'\cap fS \\
    \iff & fA\in\mc{F}(N|fS)
  \end{align*}
  Moreover $f|_S$ is a pointed bijection.  Hence $f|_S$ is an
  isomorphism by Lemma \ref{iso}.
	
  \textit{Contraction}: Similarly, we have the following equivalent
  statements via Definition \ref{restriction-contraction}:
  \begin{align*}
    A\in\mc{F}(M/S)
    \iff & \exists B\in\mc{F}(M)
           \textrm{ such that } S\subseteq B\textrm{ and } A=B\setminus S \\
    \iff & \exists B\in\mc{F}(M)
           \textrm{ such that } fS\subseteq fB \textrm{ and } fA=fB\setminus fS\\
    \iff & \exists B'\in\mc {F}(N)
           \textrm{ such that }fS\subseteq B' \textrm{ and }fA=B'\setminus fS \\
    \iff & fA\in\mc{F}(N/fS)
  \end{align*}
  Moreover $f|_{E_M\setminus S}$ is a pointed bijection.  Hence
  $f|_{E_M\setminus S}$ is an isomorphism by Lemma \ref{iso}.
\end{proof}

\begin{lem}\label{minors commute}
  For every matroid $M$ and every $T\subseteq S\subseteq E_M$ we have
  $(M|S)/T=(M/T)|S$.
\end{lem}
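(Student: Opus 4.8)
The plan is to prove the identity directly at the level of flats operators, using the cryptomorphism between a matroid and its set of flats together with Definition~\ref{restriction-contraction}. The first step is a bookkeeping observation about ground sets: $(M|S)/T$ is a matroid on $S\setminus T$, and $(M/T)|S$ should be read as the restriction of the matroid $M/T$ (which lives on $E_M\setminus T$) to $S\cap(E_M\setminus T)=S\setminus T$, so it too is a matroid on $S\setminus T$. Thus it suffices to check that $\mc{F}((M|S)/T)=\mc{F}((M/T)|S)$ as collections of subsets of $S\setminus T$.

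For the left-hand side I would simply quote the computation recorded immediately before the statement,
\[
  \mc{F}((M|S)/T)=\set{(F\cap S)\setminus T}{T\subseteq F\in\mc{F}(M)}.
\]
For the right-hand side I would apply Definition~\ref{restriction-contraction} twice: first $\mc{F}(M/T)=\set{A\setminus T}{T\subseteq A\in\mc{F}(M)}$, and then restricting this matroid to $S\setminus T$ gives $\mc{F}((M/T)|S)=\set{(A\setminus T)\cap(S\setminus T)}{T\subseteq A\in\mc{F}(M)}$.

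It then remains only to observe the elementary set identity $(A\setminus T)\cap(S\setminus T)=(A\cap S)\setminus T$, valid for arbitrary sets, since both sides consist exactly of the elements belonging to $A$ and to $S$ but not to $T$. Feeding this into the previous display matches the two families of flats verbatim, and since the two matroids also share the ground set $S\setminus T$, they are equal. I do not expect any genuine obstacle here: the argument is a two-line unwinding of the definitions, and the only point deserving explicit mention is the mild notational convention that $(M/T)|S$ denotes restriction to $S\setminus T$, which I would state at the outset so that the comparison of ground sets is unambiguous.
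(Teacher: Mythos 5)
Your argument is correct. It does, however, take a different route from the paper: the paper disposes of the lemma in one line by citing the standard commutation of deletion and contraction, $(M\setminus A)/B=(M/B)\setminus A$ for disjoint $A,B$, applied with $A=E_M\setminus S$ and $B=T$, whereas you verify the identity directly from Definition~\ref{restriction-contraction} at the level of flats. Your computation is sound: the left-hand family is the displayed formula $\set{(F\cap S)\setminus T}{T\subseteq F\in\mc{F}(M)}$, the right-hand family is $\set{(A\setminus T)\cap(S\setminus T)}{T\subseteq A\in\mc{F}(M)}$, and the set identity $(A\setminus T)\cap(S\setminus T)=(A\cap S)\setminus T$ matches them, while both matroids live on $S\setminus T$. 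What the paper's approach buys is brevity and an appeal to a well-known fact (available in Oxley); what yours buys is a self-contained verification entirely within the flats formalism the paper has set up, plus an explicit resolution of the mild notational abuse in $(M/T)|S$ (restriction is really to $S\setminus T$), which the paper leaves implicit. One could add, if one wanted your proof to also establish the quoted display rather than cite it, the one-line observation that for $T\subseteq S$ a flat $A\cap S$ of $M|S$ contains $T$ if and only if $A$ does; but as the display is recorded in the paper just before the lemma, quoting it is legitimate.
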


\begin{proof}
  It is a standard exercise that $(M\setminus A)/B=(M/B)\setminus A$.
  Apply this result with $A=E\setminus S$ and $B=T$.
\end{proof}

\begin{lem}
  Let $f:M\to N$ be a matroid strong map. Then we have the following:
  \begin{enumerate}
  \item $f$ is a monomorphism in $\E$ precisely when $f$ is injective.
  \item $f$ is an epimorphism in $\E$ precisely when $f$ is
    surjective.
  \end{enumerate}
\end{lem}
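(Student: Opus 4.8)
The statement has two parts; I will treat them in parallel, using the forgetful functor $\FF \colon \MS \to \Set$ and the characterization of isomorphisms in Lemma \ref{iso}. The key structural fact to exploit is that $\FF$ is faithful and sends monos/epis in $\E$ to monos/epis in $\Set$, so that injectivity (resp. surjectivity) of $f$ as a set map is \emph{necessary}; the content is the converse.

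\textbf{Necessity.} Suppose $f \colon M \to N$ is a monomorphism in $\E$. I would first note that the free-pointed-matroid functor (the left adjoint to $\FF$ mentioned in the introduction), or more simply a direct argument with one-element test matroids, detects non-injectivity: if $f(x)=f(y)$ with $x \ne y$ non-basepoint elements, one builds two distinct strong maps $g_1, g_2$ from a suitable small matroid $P$ into $M$ with $f g_1 = f g_2$, contradicting monicity. Concretely, let $P$ be the pointed matroid on $\{*, p\}$ with $p$ not a loop (a single non-loop point), and define $g_1(p)=x$, $g_2(p)=y$; each is a strong map since every flat of $M$ pulls back to either $\{*\}$ or $\{*,p\}$, both flats of $P$, and $f g_1 = f g_2$. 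Dually, if $f$ is an epimorphism but not surjective, pick $z \in E_N$ not in the image and produce two strong maps $N \to Q$ agreeing on the image of $f$ but differing at $z$, e.g. $Q$ the pointed matroid on $\{*, q\}$ with $q$ a loop, mapping everything but $z$ to $*$ and sending $z$ to $*$ vs. ... — here one must be slightly careful to keep both maps strong; using $Q$ with $q$ a loop makes every function into $Q$ a strong map, so $z \mapsto *$ and $z \mapsto q$ (all else to $*$) are two distinct strong maps equalizing $f$.

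\textbf{Sufficiency.} This is the main point and uses the definitions of $\MM, \EE$. Suppose $f \colon M \to N$ is an injective strong map. Set $S := f(\wt E_M) \cup \{*_N\} \subseteq E_N$, so $f$ factors as $M \xrightarrow{\;f'\;} N|S \hookrightarrow N$ where $f'$ is the corestriction. It suffices to show $f'$ is an isomorphism, and by Lemma \ref{iso} it is enough to check $A \in \mc F(M) \iff f'A \in \mc F(N|S)$, since $f'$ is already a pointed bijection. The direction ``$f'A \in \mc F(N|S) \Rightarrow A \in \mc F(M)$'' is exactly the strong-map condition $f^{-1}(B) \in \mc F(M)$ applied to $B \in \mc F(N)$ with $B \cap S = f'A$, together with injectivity to identify $f^{-1}(B)$ with $A$. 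For the forward direction ``$A \in \mc F(M) \Rightarrow f'A \in \mc F(N|S)$'', the issue is that strong maps do not in general push flats forward; this is where I expect the real work. The way out is to use that $f$ \emph{is} injective: then $f(M)$ is a matroid on $S$ isomorphic to $M$ via $f'$, and for a \emph{bijective} strong map the flat-pullback condition forces the pushforward condition as well — this is precisely the argument already made in the proof of Lemma \ref{iso}, Necessity. So the crux reduces to: a bijective strong map is an isomorphism, which is Lemma \ref{iso}, and the factorization through $N|S$ realizes $f$ as an admissible mono. The epimorphism case is dual: given a surjective strong map $f \colon M \to N$, set $T := f^{-1}(*_N) \setminus \{*_M\}$ and show $f$ factors as $M \twoheadrightarrow M/T \xrightarrow{\sim} N$, checking via Lemma \ref{iso} and Definition \ref{restriction-contraction} that the induced map $M/T \to N$ is a bijective strong map, hence an isomorphism.

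\textbf{Anticipated obstacle.} The delicate step is confirming that the corestriction $f' \colon M \to N|S$ (resp. the coinduced map $M/T \to N$) genuinely has the flat-pullback property \emph{and} the reverse implication needed for Lemma \ref{iso} — i.e. that restricting/contracting $N$ along the image does not create ``extra'' flats downstairs or lose flats upstairs. One must verify $\mc F(N|S) = \{ B \cap S : B \in \mc F(N)\}$ interacts correctly with $f^{-1}$, using injectivity of $f$ to pass between $B \cap S$ and $f(f^{-1}B)$; and in the epi case, that $\mc F(M/T) = \{ F \setminus T : T \subseteq F \in \mc F(M)\}$ matches $\{ f^{-1}(C) \setminus T : C \in \mc F(N)\}$ under surjectivity. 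These are short set-theoretic manipulations but are the only place the matroid axioms (beyond bijectivity) could in principle intervene, so I would write them out carefully.
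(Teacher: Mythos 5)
Your ``Necessity'' direction (categorical mono implies injective, categorical epi implies surjective) is fine and is essentially the paper's own argument: the paper tests against the pointed matroid $U^{\ast}_{1,1}$ on $\{1,\ast\}$ with $1$ not a loop (every pointed map out of it is strong) and against $U^{\ast}_{0,1}$ with $1$ a loop (every pointed map into it is strong), exactly as you do. (Your restriction to non-basepoint $x\neq y$ omits the case $f(x)=\ast_N$ with $x\neq \ast_M$, but the same pair of test maps handles it.)

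The problem is your ``Sufficiency'' section. First, it aims at the wrong statement: ``monomorphism in $\E$'' in this lemma means a categorical monomorphism (left-cancellable morphism), not membership in the class $\MM$ of admissible monomorphisms. On the intended reading this direction is immediate: a morphism of $\MS$ is determined by its underlying set map, so if $f$ is injective and $fg_1=fg_2$ then $g_1=g_2$ as set maps and hence as strong maps, and dually for epimorphisms; that one-sentence observation is all the paper does here. What you actually try to prove --- that every injective strong map lies in $\MM$ (i.e.\ factors as an isomorphism onto a restriction) and every surjective strong map lies in $\EE$ --- is false. Second, and independently, the pivotal step of your argument is false: a bijective strong map need not be an isomorphism, and Lemma \ref{iso} does not say it is; it assumes the two-way condition $A\in\mc{F}(M)\iff fA\in\mc{F}(N)$ and only then concludes $f$ is invertible. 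Concretely, let $M$ be the pointed matroid on $\{1,\ast\}$ with $1$ not a loop (flats $\{\ast\}$ and $\{1,\ast\}$) and $N$ the pointed matroid on the same set with $1$ a loop (unique flat $\{1,\ast\}$). The identity map is a bijective strong map, since the unique flat of $N$ pulls back to a flat of $M$, but it is not an isomorphism because $\{\ast\}\in\mc{F}(M)$ while $\{\ast\}\notin\mc{F}(N)$; correspondingly it is an injective strong map not in $\MM$ and a surjective strong map not in $\EE$, so your corestriction $f'\colon M\to N|S$ (here $S=E_N$) is not an isomorphism and the factorization argument collapses. The repair is simply to delete that section and replace it by the faithfulness argument above.
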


\begin{proof}	
  The necessity must hold for both statements simply by noting that
  every strong map is also a function on underlying sets, and thus the
  required properties for monics and epics must hold by the
  corresponding properties of their underlying maps.
	
  Assume $f:M\to N$ is monic and consider the pointed matroid
  $U^\ast_{1,1} = (\{1,\ast\},\ast)$ with flats $\{\ast\}$ and
  $\{1,\ast\}$.  Trivially every pointed map $g:U^\ast_{1,1}\to M$ is
  a strong map.  For each $a\in E_M$ let $g_a:U^\ast_{1,1}\to M$
  denote the pointed map sending $1\mapsto a$.  Suppose $f(a)=f(b)$
  for some $a,b\in E_M$; thus $fg_a=fg_b$ yields $a=g_a(1)=g_b(1)=b$
  by the assumption that $f$ is monic.  Hence $f$ is injective as
  desired.
	
  Assume $f:M\to N$ is epic and consider the pointed matroid
  $U_{0,1}^\ast=(\{1,\ast\},\ast)$ with the flat $\{1,\ast\}$.
  Trivially every pointed map $h:N\to U_{0,1}^\ast$ is a strong map.
  For each $a\in N$ define $h_{a}:N\to U_{0,1}^\ast$ by $x\mapsto 1$
  precisely when $x=a$.  If $f$ is not surjective, then choosing any
  $a\in E_N\setminus fE_M$ we have $h_af=h_\ast f$, and thus
  $h_\ast=h_a$ by the assumption that $f$ is epic.  But this implies
  $a=\ast\in fE_M$, which is absurd.  Hence $f$ is surjective as
  desired.
\end{proof}

\begin{lem}\label{canonical maps}
  Let $M$ be a matroid on $E_M$ and $S\subseteq E_M$.
  \begin{enumerate}
  \item If $\ast_M\in S$, then there is a canonical map
    $i_S:M|S\hookrightarrow M$ in $\E$.
  \item If $\ast_M\notin S$, then there is a canonical map
    $c_S:M\twoheadrightarrow M/S$ in $\E$.
  \end{enumerate}
\end{lem}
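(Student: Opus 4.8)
The plan is to exhibit both maps explicitly on underlying pointed sets and then check the strong-map condition directly from Definition \ref{restriction-contraction}, after which the monomorphism/epimorphism claims follow immediately from the preceding lemma characterizing monics and epics in $\E$ as injections and surjections.

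For (1), I would take $i_S\colon M|S\to M$ to be the underlying-set inclusion $S\hookrightarrow E_M$; since $\ast_M\in S$ this is a map of pointed sets, and it is the only natural candidate, whence ``canonical''. To see it is a strong map, observe that for any $A\in\mathcal{F}(M)$ the preimage $i_S^{-1}A$ is exactly $A\cap S$, and $A\cap S\in\mathcal{F}(M|S)$ by the very definition of the restriction. Thus $i_S$ is a morphism in $\E$; being injective it is a monomorphism by the previous lemma, and as it is literally of the form $M|S\xrightarrow{\sim}M|S\hookrightarrow M$ it lies in $\MM$.

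For (2), I would take $c_S\colon M\to M/S$ to be the map $E_M\to E_M\setminus S$ which is the identity on $E_M\setminus S$ and sends every element of $S$ to $\ast_M$ (legitimate since $\ast_M\notin S$); again this is a canonical map of pointed sets. For the strong-map condition, let $A\setminus S$ be a flat of $M/S$, so $S\subseteq A\in\mathcal{F}(M)$. Since $\ast_M$ is a loop of $M$ it lies in $\sigma\emptyset$ and hence in every flat, so $\ast_M\in A$, and as $\ast_M\notin S$ we get $\ast_M\in A\setminus S$; therefore every element of $S$ maps into $A\setminus S$, while an element outside $S$ maps into $A\setminus S$ precisely when it lies in $A$. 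Combining, $c_S^{-1}(A\setminus S)=S\cup(A\setminus S)=A\in\mathcal{F}(M)$. So $c_S$ is a morphism in $\E$, and being surjective it is an epimorphism by the previous lemma; as it is of the form $M\twoheadrightarrow M/S\xrightarrow{\sim}M/S$ it lies in $\EE$.

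The only step that requires any care is the preimage computation for $c_S$: one must use that $\ast_M$, being a loop, belongs to every flat, which is exactly what forces the collapsed block $S$ to be swept into each flat of $M/S$ and makes the preimage equal to $A$ rather than $A\setminus S$. Everything else is a direct unwinding of definitions; this is where the pointedness hypothesis genuinely enters, but it is not a serious obstacle.
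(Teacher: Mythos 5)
Your proof is correct and follows essentially the same route as the paper: take the set inclusion for $i_S$ and the collapse-$S$-to-the-basepoint map for $c_S$, then verify the strong-map condition by computing preimages of flats, using in the contraction case that the basepoint (being a loop) lies in every flat so that $c_S^{-1}(A\setminus S)=S\cup(A\setminus S)=A$. Your explicit justification of $\ast_M\in A\setminus S$ and the remarks on membership in $\MM$ and $\EE$ are fine additions but do not change the argument.
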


\begin{proof}
  \textit{Restriction}: Suppose $\ast_M\in S$ and let $i_S$ denote the
  inclusion $S\hookrightarrow E_M$.  Note that for all $F\in\mc{F}(M)$
  we have $i_S^{-1}F = S\cap F\in\mc{F}(M|S)$ by Definition
  \ref{restriction-contraction}.  Hence $i_S$ is a strong map as
  desired.
	
  \textit{Contraction}: Suppose $\ast_M\notin S$ and let
  $c_S:E_M\to E_M\setminus S$ denote the map defined by
  $c_SS=\{\ast\}$ and $c_S|_{E_M\setminus S}=\id$.  Now for all
  $F\in\mc{F}(M/S)$, there is a flat $A\in\mc{F}(M)$ with
  $S\subseteq A$ and $F=A\setminus S$.  Thus noting $\ast\in F$ yields
  $c_S^{-1}F = S\cup F = S\cup(A\setminus S) = A\in\mc{F}(M)$ as
  $S\subseteq A$.  Hence $c_S$ is a strong map as desired.
\end{proof}

Now, we prove that $(\E,\MM,\EE)$ is a proto-exact category by
verifying each property as below.

\begin{pro}[Verifying Property 1]\label{proposition: property 1}
  $(\E,\MM,\EE)$ is equipped with a zero object.
\end{pro}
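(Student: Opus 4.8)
The plan is to exhibit an explicit zero object in $\E = \MS$ and check the axioms required by Definition \ref{proto_exact}(1), namely that every map out of it lies in $\MM$ and every map into it lies in $\EE$. The natural candidate is the pointed matroid $\mathbf{0} := (\{\ast\},\ast)$ on a one-element ground set, whose only flat is $\{\ast\} = \sigma\emptyset$. First I would verify that $\mathbf{0}$ is a zero object: for any pointed matroid $M$ there is exactly one pointed map $\{\ast\} \to E_M$ (sending $\ast \mapsto \ast_M$), and it is vacuously a strong map since the preimage of any flat of $M$ either contains $\ast$ or is empty, and in both cases lies in $\mc{F}(\mathbf{0})$ (the empty set is not generally a flat, but $f^{-1}A$ is either $\{\ast\}$ or $\emptyset$; here one notes $\ast_M \in \sigma_M\emptyset$, so every flat of $M$ contains $\ast_M$, hence $f^{-1}A = \{\ast\}$ always, which is the unique flat of $\mathbf{0}$). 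Dually, there is exactly one pointed map $E_M \to \{\ast\}$, the constant map, and it is strong because $f^{-1}\{\ast\} = E_M \in \mc{F}(M)$ (the ground set is always a flat). So $\mathbf{0}$ is both initial and terminal.

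Next I would check the admissibility conditions. For the unique map $\mathbf{0} \to M$: taking $S = \{\ast_M\} \subseteq E_M$, we have $M|S = M|\{\ast_M\} \cong \mathbf{0}$ (its only flat is $\{\ast_M\}$), and the map factors as $\mathbf{0} \overset{\sim}{\to} M|\{\ast_M\} \hookrightarrow M$ using the canonical inclusion $i_S$ from Lemma \ref{canonical maps}(1). Hence $\mathbf{0} \to M$ lies in $\MM$. For the unique map $M \to \mathbf{0}$: taking $S = \wt{E}_M = E_M \setminus \{\ast_M\}$, we have $\ast_M \notin S$, so by Lemma \ref{canonical maps}(2) there is a canonical contraction $c_S : M \twoheadrightarrow M/S$, and $M/S$ is a pointed matroid on $E_M \setminus S = \{\ast_M\}$, hence $\cong \mathbf{0}$; thus $M \to \mathbf{0}$ factors as $M \twoheadrightarrow M/\wt{E}_M \overset{\sim}{\to} \mathbf{0}$ and lies in $\EE$.

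The only genuinely delicate point — really the one place where matroid-specific reasoning enters rather than formal category theory — is confirming that the one-element pointed matroid $\mathbf{0}$ is a legitimate object of $\MS$ and that $M|\{\ast_M\}$ and $M/\wt E_M$ genuinely coincide with it up to isomorphism in $\E$; this rests on the fact that $\ast_M$ being a \emph{loop} means $\ast_M \in \sigma_M\emptyset$, so that the closure of $\emptyset$ (and of $\{\ast_M\}$) in the relevant minors is forced to be $\{\ast_M\}$, and one invokes Lemma \ref{iso} to identify the resulting one-element pointed matroid with $\mathbf{0}$. I expect this bookkeeping with the point/loop to be the main (though still minor) obstacle; everything else is an immediate unwinding of definitions. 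I would close by remarking that $\mathbf{0}$ will serve as the zero object throughout the remaining verifications.
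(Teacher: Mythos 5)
Your proof is correct and follows essentially the same route as the paper: the same zero object $(\{\ast\},\ast)$, with $0\to M$ factored through $M|\{\ast_M\}$ to land in $\MM$ and $M\to 0$ factored through $M/(E_M\setminus\{\ast_M\})$ to land in $\EE$. The extra verifications you supply (uniqueness and strongness of the pointed maps, and the loop/point bookkeeping identifying the one-element minors with the zero object) are just details the paper leaves implicit.
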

\begin{proof}
  The pointed matroid $(\{*\},*)$ is the zero object. Indeed, one
  easily sees that every map $0 \to M$ can be factored as
  $0 \to M|\{\ast\} \hookrightarrow M$ and hence is in
  $\MM$. Similarly, every map $M \to 0$ can be factored as
  $M \twoheadrightarrow M/(E_M\setminus\{\ast_M\}) \to 0$ and hence in
  $\EE$.
\end{proof}


\begin{pro}[Verifying Property 2]\label{proposition: property 2}
  The classes $\MM$ and $\EE$ are closed under composition and contain
  all isomorphisms.
\end{pro}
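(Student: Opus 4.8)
The plan is to handle $\MM$ and $\EE$ separately, and within each, to treat "contains all isomorphisms" and "closed under composition" as two tasks. The first task is immediate: an isomorphism $f\colon N\xrightarrow{\sim} M$ factors as $N\xrightarrow{\sim} M|E_M\hookrightarrow M$ since $M|E_M=M$ (note $\ast_M\in E_M$), so $f\in\MM$; dually $f$ factors as $M\twoheadrightarrow M/\emptyset\xrightarrow{\sim}N$, so $f\in\EE$. The substantive task is closure under composition, which I would prove for $\MM$ and then invoke the dual argument for $\EE$.

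For $\MM$: suppose $g\colon L\to M$ and $f\colon M\to N$ are admissible monos. By Lemma (on monos/epis) each is injective, so we may assume after composing with the isomorphisms in their factorizations that $g$ is (up to iso) the canonical inclusion $i_S\colon M|S\hookrightarrow M$ for some $\ast_M\in S\subseteq E_M$ playing the role of $L$, and $f$ is (up to iso) the canonical inclusion $i_T\colon N|T\hookrightarrow N$ with $\ast_N\in T\subseteq E_N$ and $M\cong N|T$. Under this identification $S$ corresponds to a subset $S'\subseteq T\subseteq E_N$ with $\ast_N\in S'$, and the key computation is that $(N|T)|S' = N|S'$ — this is a one-line check from Definition \ref{restriction-contraction}, since $\mc{F}((N|T)|S')=\{(A\cap T)\cap S' : A\in\mc{F}(N)\}=\{A\cap S':A\in\mc{F}(N)\}=\mc{F}(N|S')$ using $S'\subseteq T$. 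Hence the composite $f\circ g$ agrees, up to the isomorphisms from Lemma \ref{iso restriction and contraction} transporting restrictions along $f$, with $N|S'\hookrightarrow N$, exhibiting a factorization $L\xrightarrow{\sim}N|S'\hookrightarrow N$, so $f\circ g\in\MM$.

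For $\EE$: the argument is dual, using contractions in place of restrictions. Given admissible epis $M\twoheadrightarrow M/S\xrightarrow{\sim}L$ and $L\twoheadrightarrow L/T\xrightarrow{\sim}N$, transport $T$ back to a subset $S'\subseteq E_M\setminus\{\ast_M\}$ disjoint from $S$ (via the isomorphism and Lemma \ref{iso restriction and contraction}(2)), and use the identity $(M/S)/S' = M/(S\cup S')$, which again follows directly from Definition \ref{restriction-contraction} since $\mc{F}((M/S)/S')=\{(A\setminus S)\setminus S': S\subseteq A\in\mc{F}(M),\ S'\subseteq A\setminus S\} = \{B\setminus(S\cup S'): S\cup S'\subseteq B\in\mc{F}(M)\}=\mc{F}(M/(S\cup S'))$. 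This yields a factorization $M\twoheadrightarrow M/(S\cup S')\xrightarrow{\sim}N$, so the composite lies in $\EE$.

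I expect the main obstacle to be purely bookkeeping: correctly tracking the isomorphisms in the definitions of $\MM$ and $\EE$ so that "the subset $S$ inside $M\cong N|T$" becomes an honest subset $S'$ of $E_N$, and making sure the point $\ast$ is handled consistently (for $\MM$ the basepoint always lies in the restricting set; for $\EE$ it never lies in the contracting set). Lemmas \ref{iso} and \ref{iso restriction and contraction} are exactly what make these transports legitimate, and the two identities $(N|T)|S'=N|S'$ and $(M/S)/S'=M/(S\cup S')$ — close cousins of Lemma \ref{minors commute} — do the real work; neither requires more than unwinding Definition \ref{restriction-contraction}.
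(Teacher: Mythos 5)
Your proposal is correct and follows essentially the same route as the paper: factor each admissible mono/epi per the definition, transport the relevant subset along the isomorphism in the factorization using Lemma \ref{iso restriction and contraction}, and conclude via the compatibility of iterated restrictions (resp.\ contractions). The only difference is that you verify the identities $(N|T)|S'=N|S'$ and $(M/S)/S'=M/(S\cup S')$ explicitly from Definition \ref{restriction-contraction}, which the paper's proof uses tacitly.
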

\begin{proof}
  To see Property 2, first notice that every isomorphism $f:M\to N$
  can be factored as $M \xrightarrow{f} N|E_N \hookrightarrow N$ and
  $M \twoheadrightarrow M/\emptyset \xrightarrow{f} N$; in particular,
  every isomorphism is a member of both $\MM$ and $\EE$.

  Given two composible members $f:M\to N$ and $g:N\to P$ of $\MM$,
  factor these as $M \xrightarrow{f_0} N|S \hookrightarrow N$ and
  $N \xrightarrow{g_0} P|T \hookrightarrow P$.  Now the composite $gf$
  factors as
  $M \xrightarrow{f_0} N|S \hookrightarrow N \xrightarrow{g_0} P|T
  \hookrightarrow P$.  Notice by construction that $g_0S\subseteq T$;
  as $g_0$ is an isomorphism, so Lemma \ref{iso restriction and
    contraction} yields that $g_0|_S: N|S\to P|g_0S$ is an
  isomorphism. In particular, we have the factorization
  $M \xrightarrow{f_0} N|S \xrightarrow{\id} N|S \xrightarrow{g_0|_S}
  P|g_0S \hookrightarrow P$; the first three arrows in this diagram
  are isomorphisms. Hence
  $M \xrightarrow{g_0|_Sf} P|g_0S \hookrightarrow P$ is a
  factorization of $gf$ which shows this is an admissible
  monomorphism.

  Given two composible members $f:M\to N$ and $g:N\to P$ of $\EE$,
  factor these as $M \to M/S \xrightarrow{f_0} N$ and
  $N \to N/T \xrightarrow{g_0} P$.  Now the composite factors as
  $M \to M/S \xrightarrow{f_0} N \to N/T \xrightarrow{g_0} P$.  Notice
  by construction that $f_0$ is an isomorphism, so Lemma \ref{iso
    restriction and contraction} yields that
  $M/(S\cup f_0^{-1}T) \xrightarrow{f_0|_{E_M\setminus(S\cup
      f_0^{-1}T)}} N/T$ is an isomorphism.  In particular we have a
  factorization
  $M \to M/(S\cup f^{-1}T) \xrightarrow{f_0|_{E_M\setminus(S\cup
      f_0^{-1}T)}} N/T \xrightarrow{g_0} P$; the latter two arrows in
  this diagram are isomorphisms.  Hence
  $M \to M/(S\cup f^{-1}T) \xrightarrow{g_0f_0|_{E_M\setminus(S\cup
      f_0^{-1}T)}} P$ is a factorization of $gf$ which shows this is
  an admissible epimorphism.
\end{proof}



To see Properties 3, 4, and 5 we will appeal frequently to the
following Lemma:
\begin{lem}\label{simple bicartesian}
  For all $T\subseteq S\subseteq E_M$ with $\ast_M\in S\setminus T$,
  the following is a biCartesian square in $\E$:
  \[
    \begin{tikzcd}
      M|S \ar{r}{i_S} \ar[swap]{d}{c_T} & M \ar{d}{c_T'}
      \\
      (M|S)/T \ar[swap]{r}{i_S'} & M/T
    \end{tikzcd}
  \]
\end{lem}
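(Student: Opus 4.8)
The plan is to verify directly that the square is both Cartesian and co-Cartesian, working entirely on the level of underlying pointed sets and flats. The underlying set square is
\[
  \begin{tikzcd}
    S \ar{r}{i_S} \ar[swap]{d}{c_T} & E_M \ar{d}{c_T'}
    \\
    (S\setminus T)\cup\{\ast\} \ar[swap]{r}{i_S'} & (E_M\setminus T)\cup\{\ast\}
  \end{tikzcd}
\]
where the horizontal maps are the inclusions and the vertical maps collapse $T$ to the basepoint. In $\Set$ this square is easily seen to be biCartesian: given $x\in E_M$ and $y\in(S\setminus T)\cup\{\ast\}$ agreeing in $(E_M\setminus T)\cup\{\ast\}$, either both map to a common nonbasepoint element (forcing $x=y\in S\setminus T$) or $c_T'(x)=\ast$; in the latter case $x\in T\cup\{\ast\}$ and $y=\ast$, and the unique preimage in $S$ is $\ast$ (which works since $\ast\in S\setminus T$). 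Co-Cartesianness in $\Set$ is the dual check. So the main content is that the \emph{matroid} square has the correct universal property within $\E$, i.e.\ that the comparison maps produced by $\Set$ are in fact strong maps, and that this holds for cones built from strong maps.

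First I would record that all four corners and all four maps are legitimate: $i_S,i_S'\in\MM$ and $c_T,c_T'\in\EE$ by Lemma \ref{canonical maps}, and the square commutes on underlying sets (both composites send $x\in S$ to $x$ if $x\notin T$ and to $\ast$ if $x\in T\cup\{\ast_M\}$). The identification of the two ways of forming the bottom-right corner, $(M|S)/T = (M/T)|S$, is Lemma \ref{minors commute}, and the bottom-left corner is computed by the displayed formula for $\mc{F}((M|S)/T)$ just before the statement.

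For the Cartesian property: suppose $g:P\to M$ and $h:P\to (M|S)/T$ are strong maps with $c_T'\circ g = i_S'\circ h$. On underlying sets the pullback $S\times_{(E_M\setminus T)\cup\{\ast\}}E_M$ is (canonically) $S$ itself via $i_S$ and $c_T$, so there is a unique pointed set map $u:E_P\to S$ with $i_S\circ u = g$ and $c_T\circ u = h$. I must check $u$ is a strong map, i.e.\ $u^{-1}A\in\mc{F}(P)$ for every $A\in\mc{F}(M|S)$. Using $\mc{F}(M|S)=\{\,F\cap S : F\in\mc{F}(M)\,\}$ and the concrete description of $u$ as $x\mapsto g(x)$ (landing in $S$), I would split into the case where $A$ does not contain $\ast$ — then $u^{-1}A = g^{-1}F$ for a suitable $F\in\mc{F}(M)$ not meeting $T$ — and the case $\ast\in A$ — then one writes $A = (F\cap S)$ with $T\subseteq F$ and uses that $h^{-1}(A\setminus\ldots)$ is a flat of $P$ together with $u^{-1}A = g^{-1}F$; in both subcases $u^{-1}A$ is a flat of $P$ because $g$ (resp.\ $h$) is strong. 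Co-Cartesianness is the formally dual argument: given $g:M\to Q$ and $h:(M|S)/T\to Q$ agreeing on $M|S$, the pushout on pointed sets is $(E_M\setminus T)\cup\{\ast\}$ via $c_T'$ and $i_S'$, yielding a unique pointed map $v:(E_M\setminus T)\cup\{\ast\}\to E_Q$; to see $v$ is strong, pull back a flat $B\in\mc{F}(Q)$, note $v^{-1}B$ agrees with $g^{-1}B$ away from $T$ and with the basepoint's fiber on $T$, and use that $g^{-1}B\in\mc{F}(M)$ is a flat containing $T$ (because $\ast\in g^{-1}B$ forces, via compatibility with $h$ and the contraction formula, that $T\subseteq g^{-1}B$), so that $(g^{-1}B)\setminus T\in\mc{F}(M/T)$; restricting along $S$ and unwinding shows $v^{-1}B\in\mc{F}(M/T)=\mc{F}\big((E_M\setminus T)\cup\{\ast\}\big)$.

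The main obstacle I anticipate is the bookkeeping around the basepoint in the strong-map verifications: flats containing $\ast$ versus flats not containing $\ast$ behave differently under restriction and contraction, and one must be careful that the comparison map $u$ (resp.\ $v$) really does pull flats back to flats in the borderline case where $\ast$ (equivalently, $T$) is involved. Once those two case analyses are done cleanly — each is a short diagram chase using only Definition \ref{restriction-contraction}, Lemma \ref{minors commute}, and the strongness of the given cone maps — the biCartesian claim follows. I expect no difficulty beyond this; there is no need to invoke Property 3 of a proto-exact category here since we are checking Cartesian and co-Cartesian separately and directly.
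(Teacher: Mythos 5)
Your plan is essentially the paper's own proof: both universal properties are verified directly by constructing the comparison map on underlying pointed sets (where the square is biCartesian) and then checking it is a strong map via the flat descriptions of restriction and contraction, the key point in the pushout direction being that commutativity with the cone maps forces $T\subseteq g^{-1}B$, so that $g^{-1}B\setminus T$ is a flat of $M/T$. One small remark: in the pullback check the case split on whether $\ast\in A$ (and the extra conditions ``$F$ not meeting $T$'' or ``$T\subseteq F$'', which cannot always be arranged) is unnecessary, since every flat contains the basepoint loop and the identity $u^{-1}A=g^{-1}F$ for any $F\in\mathcal{F}(M)$ with $A=F\cap S$ already gives strongness of $u$ from strongness of $g$.
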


\begin{proof}
  Notice trivially that the above square commutes.
	
  \textbf{Cartesian}: Suppose that we have the following commutative
  diagram in $\E$:
  \[
    \begin{tikzcd}
      M|S \ar{r}{i_S} \ar[swap]{d}{c_T} & M \ar[swap]{d}{c_T'}
      \ar{ddr}{\beta} &
      \\
      (M|S)/T \ar{r}{i_S'} \ar[swap]{rrd}{\alpha} & M/T &
      \\
      & & N
    \end{tikzcd}
  \]
  Define $\gamma=\beta|_{E_M\setminus T}:M/T \to N$ and notice
  $\beta=\gamma c_T'$ by construction and the fact that
  $\beta i_s =\alpha c_T$. Furthermore we have
  \[
    \alpha c_T=\beta i_S=\gamma c_T'i_S=\gamma i_S'c_T
  \]
  which yields $\alpha=\gamma i_S'$ after noting trivially that
  $c_T|_{S\setminus T}=\id_{S\setminus T}$.
	
  Let $F\in\mc{F}(N)$ be arbitrary.  By assumption we have that
  $\beta^{-1}F\in\mc{F}(M)$ and $\alpha^{-1}F\in\mc{F}((M|S)/T)$.  Now
  there is an $A\in\mc{F}(M)$ such that $T\subseteq A$ and
  $\alpha^{-1}F=(A\setminus T)\cap S$.  On the other hand
  $\beta i_S=\alpha c_T$ yields the following:
  \[
    S\cap\beta^{-1}F = i_S^{-1}\beta^{-1}F = c_T^{-1}\alpha^{-1}F =
    T\cup\alpha^{-1}F = T\cup((A\setminus T)\cap S)
  \]
  Thus $T\subseteq S\cap\beta^{-1}F$ yields $T\subseteq\beta^{-1}F$,
  and so $\gamma^{-1}F = (\beta^{-1}F)\setminus T\in\mc{F}(M/T)$.
  Hence $\gamma$ is a strong map.  Applying the forgetful functor
  $\FF:\E\to \Set$, note that $\FF\gamma$ is the pushout morphism of
  the corresponding square; $\gamma$ is thus uniquely determined in
  $\E$ by uniqueness of the underlying set map $\FF\gamma$.  Hence the
  square is Cartesian.
	
  \textbf{CoCartesian}: Suppose that we have the following commutative
  diagram in $\E$:
  \[
    \begin{tikzcd}
      N \ar{drr}{\beta} \ar[swap]{ddr}{\alpha} & &
      \\
      & M|S \ar[swap]{r}{i_S} \ar{d}{c_T} & M \ar{d}{c_T'}
      \\
      & (M|S)/T \ar[swap]{r}{i_S'} & M/T
    \end{tikzcd}
  \]
  One observes that $\beta(E_N) \subseteq S$. Indeed, assume to the
  contrary that there is an $x\in E_N\setminus\beta^{-1}S$; thus
  $\beta(x)\in E_M\setminus S$ and we have $\beta(x)=c_T'\beta(x)$ as
  $T\subseteq S$.  Now this yields
  $\beta(x)=i_S'\alpha(x)=\alpha(x)\in S$, contradicting our initial
  assumption.  Hence $\beta(E_N)\subseteq S$.
	
  Define $\gamma:E_N\to S:x\mapsto\beta(x)$ and note that $\gamma$ is
  well-defined by our above argument.  By construction
  $i_S\gamma=\beta$, and
  $i_S'\alpha=c_T'\beta=c_T'i_S\gamma=i_S'c_T\gamma$; hence we have
  $\alpha=c_T\gamma$ by injectivity of $i_S'$.
	
  Let $F\in\mc{F}(M)$ be arbitrary.  Now
  $\gamma^{-1}(F\cap S)=\beta^{-1}(F\cap S)=\beta^{-1}F\in\mc{F}(N)$
  as $\beta$ is a strong morphism.  Hence $\gamma$ is a strong
  morphism.  Applying the forgetful functor $\FF:\E\to \Set$, note
  that $\FF\gamma$ is the pullback morphism of this square; $\gamma$
  is thus uniquely determined in $\E$.  Hence the square is
  coCartesian.
\end{proof}

We can now complete the proof that $\E$ is proto-exact.

\begin{pro}[Verifying Property 4]\label{proposition: property 4}
  Every diagram
  $\xymatrix{ P \ar@{^{(}->}[r]^{i'} & Q & \ar@{->>}[l]_{j'} N}$ in
  $\E$ with $i' \in \MM$ and $j' \in \EE$ can be completed to a
  biCartesian square
  \[
    \xymatrix{
      M \ar[r]^{i} \ar[d]_{j} & N\ar[d]^{j'} \\
      P \ar[r]_{i'} & Q }
  \]
  for some $M \in \E$, $i \in \MM$, $j \in \EE$.
\end{pro}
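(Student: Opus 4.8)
The plan is to reduce the general pushout to the ``universal'' biCartesian square of Lemma~\ref{simple bicartesian} by translating the given data into subsets of a common ground set. Start with $\xymatrix{ P \ar@{^{(}->}[r]^{i'} & Q & \ar@{->>}[l]_{j'} N}$, where $i' \in \MM$ and $j' \in \EE$. By definition of $\MM$, factor $i'$ as $P \overset{\sim}{\to} Q|S \hookrightarrow Q$ for some $\ast_Q \in S \subseteq E_Q$, and by definition of $\EE$, factor $j'$ as $Q \twoheadrightarrow Q/T \overset{\sim}{\to} N$ for some $T \subseteq \wt{E}_Q$ (so $\ast_Q \notin T$). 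Using Lemma~\ref{iso restriction and contraction} and Lemma~\ref{minors commute}, I may replace $P$ by $Q|S$ and $N$ by $Q/T$ and assume $i' = i_S$ and $j' = c_T$ are the canonical maps of Lemma~\ref{canonical maps}. Crucially, $\ast_Q \in S$ and $\ast_Q \notin T$ give $\ast_Q \in S \setminus T$, but $S$ and $T$ need not be comparable; this is the one extra wrinkle beyond Lemma~\ref{simple bicartesian}.

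To handle non-comparable $S, T$, set $M := Q|(S \cup T) / T$, which makes sense since $T \subseteq S \cup T$ and $\ast_Q \in (S\cup T)\setminus T$. Using Lemma~\ref{minors commute} one computes $M = (Q/T)|(S\cup T\setminus T) = (Q/T)|((S\cup T)\setminus T)$, i.e. $M$ is a restriction of $N = Q/T$, giving a canonical admissible mono $i: M \hookrightarrow N$; and $M$ is a contraction of $Q|(S\cup T)$, which in turn restricts into $Q$... more directly, $M = Q|(S\cup T)/T$ receives the contraction map from $Q|(S\cup T)$, and $Q|(S\cup T)$ need not sit inside $Q|S = P$. So instead I take the cleaner route: observe $S \subseteq S \cup T$, so there is a canonical mono $Q|S \hookrightarrow Q|(S\cup T)$, and composing with the contraction $c_T: Q|(S\cup T) \twoheadrightarrow Q|(S\cup T)/T = M$ would go the wrong way. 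The correct assignment is $M := Q|S/(S\cap T)$: then $\ast_Q \in S$, $\ast_Q \notin S \cap T$, $S \cap T \subseteq S$, so $M$ makes sense; the contraction $Q|S \twoheadrightarrow Q|S/(S\cap T) = M$ gives $j \in \EE$ (after identifying $P \cong Q|S$), and I must produce the mono $i: M \hookrightarrow N$. By Lemma~\ref{minors commute}, $M = Q|S/(S\cap T) = (Q/(S\cap T))|S$; and $N = Q/T = (Q/(S\cap T))/(T\setminus S)$ — so both $M$ and $N$ are minors of $Q' := Q/(S\cap T)$, with $M = Q'|S$ a restriction and $N = Q'/(T \setminus S)$ a contraction, where now $S$ and $T\setminus S$ are disjoint with $\ast_{Q'} \in S$. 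Lemma~\ref{simple bicartesian} (applied with $T \setminus S$ in place of its $T$, and $S \cup (T\setminus S) = S \cup T$ in place of its $S$, inside the matroid $Q'$... adjusting indices) then provides the biCartesian square whose corners are $Q'|S = M$, $Q'|(S \cup T)/(T\setminus S)$, ... . I will arrange the indices so that the resulting biCartesian square is exactly
\[
  \xymatrix{ M \ar[r]^{i} \ar[d]_{j} & N \ar[d]^{j'} \\ P \ar[r]_{i'} & Q, }
\]
reading off $i$ as a canonical restriction-inclusion and $j$ as a canonical contraction, hence $i \in \MM$ and $j \in \EE$.

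The main obstacle is the bookkeeping needed to fit the non-comparable subsets $S$ and $T$ into the comparable-subset hypothesis of Lemma~\ref{simple bicartesian}: one must pass to the minor $Q' = Q/(S\cap T)$ (or, symmetrically, verify that $Q|(S\cup T)$ works), check via Lemma~\ref{minors commute} and Lemma~\ref{iso restriction and contraction} that all four objects and maps are identified correctly up to canonical isomorphism, and confirm the resulting square really is the one demanded. The verification that it is biCartesian is then immediate from Lemma~\ref{simple bicartesian}, since biCartesianness is stable under composing the square with isomorphisms on each corner. I expect no difficulty in the strong-map or flats computations themselves — those are all subsumed by the cited lemmas — only in choosing the right common ground set and matching up the canonical maps.
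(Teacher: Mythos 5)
There is a genuine gap, and it starts at the very first step: you factored the admissible epimorphism $j'\colon N\twoheadrightarrow Q$ in the wrong direction. By the definition of $\EE$, a map in $\EE$ with source $N$ and target $Q$ factors as $N\twoheadrightarrow N/T\overset{\sim}{\to}Q$ for some $T\subseteq \wt{E}_N$; your factorization $Q\twoheadrightarrow Q/T\overset{\sim}{\to}N$ is a factorization of a map from $Q$ to $N$, which is not the arrow in the given diagram (indeed there is in general no strong map $Q/T\to Q$ at all). Everything downstream inherits this error: you place both $S$ and $T$ inside $E_Q$, perceive a ``non-comparability'' problem that does not actually arise, and end up proposing $M:=Q|S/(S\cap T)$, a minor of $Q$. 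That object is too small to be the pullback: in the correct picture $Q$ is a contraction of $N$, so $M$ must be a restriction of the \emph{larger} matroid $N$. A quick sanity check exposes this: if $i'=\mathrm{id}_Q$ (so $S=E_Q$), the completed square must have $M\cong N$, whereas your recipe returns a minor of $Q$. Note also that the maps you construct go the wrong way for the required square (your $j$ is a contraction $P\twoheadrightarrow M$, while the square demands $j\colon M\twoheadrightarrow P$).

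The correct route, which the paper follows, is: factor $i'$ as $P\overset{\sim}{\to}Q|S\hookrightarrow Q$ with $S\subseteq E_Q$ and $j'$ as $N\twoheadrightarrow N/T\overset{\sim}{\to}Q$ with $T\subseteq \wt{E}_N$, then set $A:=(j')^{-1}(S)\subseteq E_N$ and $M:=N|A$. Since $j'$ sends $T$ to the basepoint and $\ast_Q\in S$, one automatically has $T\subseteq A$ and $\ast_N\in A\setminus T$, so the comparable-subset hypothesis of Lemma \ref{simple bicartesian} holds with no extra bookkeeping; that lemma gives the biCartesian square for $T\subseteq A$ inside $N$, and Lemmas \ref{minors commute} and \ref{iso restriction and contraction} let you transport it along the two isomorphisms from the factorizations to obtain exactly the square $M\hookrightarrow N$, $M\twoheadrightarrow P$, over $P\hookrightarrow Q\twoheadleftarrow N$. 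So the key missing idea in your plan is to pull $S$ back along $j'$ into the ground set of $N$, rather than trying to push $T$ into the ground set of $Q$.
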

\begin{proof}
  Since $i' \in \MM$, there exists $S \subseteq E_Q$ such that
  $i'=i_sg$, where $g:P \to Q|S$ is an isomorphism and $i_S:Q|S \to Q$
  (as in Lemma \ref{canonical maps}). Similarly, since $j' \in \EE$,
  there exists $T \subseteq E_N$ such that $j'=fc'_T$, where
  $f:N/T \to Q$ is an isomorphism and $c'_T:N \to N/T$ (as in Lemma
  \ref{canonical maps}). We prove that $M=N|(j')^{-1}(S)$ gives us the
  desired biCartesian square along with a canonical choice of $i$ and
  $j$.

  Let $A:=(j')^{-1}(S)$. Then one can easily see that $T\subseteq A$,
  $*_N \in A\setminus T$ and hence, from the above factorizations of
  $i'$ and $j'$, we obtain the following commuting diagram:
  \[
    \begin{tikzcd}[column sep=large,row sep=large]
      & N|A \ar{r}{i_{A}} \ar{d}{c_T} & N \ar{d}{c_T'}
      \\
      P \ar{r}{f|_{A}^{-1}g} \ar{d}{\id} & (N/T)\vert A \ar{r}{i_{A}'}
      \ar{d}{f|_{A}} & N/T \ar{d}{f}
      \\
      P\ar{r}{g} & Q\vert S \ar{r}{i_S} & Q
    \end{tikzcd}
  \]

  Now note that the following square is a commuting square in $\E$,
  where $i=i_{A}$ and $j=g^{-1}f|_{A}c_T$:
  \[
    \begin{tikzcd}
      N|A \ar{r}{i} \ar[swap]{d}{j} & N \ar{d}{j'}
      \\
      P \ar[swap]{r}{i'} & Q
    \end{tikzcd}
  \]

  Then we have a factorization of $i$ as follows:
  \[
    N|A\xrightarrow{\id}N|A\xrightarrow{i_{A}}N
  \]
  Using Lemma \ref{minors commute}, we have a factorization of $j$:
  \[
    N|A\xrightarrow{c_T}(N|A)/T\xrightarrow{g^{-1}f|_{A}} P.
  \]
  Moreover $g^{-1}f|_{A}$ is an isomorphism by Lemma \ref{iso
    restriction and contraction}.  Hence $i\in\MM$ and $j\in\EE$.

  To see that this square is Cartesian, note that every commuting
  diagram
  \[
    \begin{tikzcd}
      N|A \ar{r}{i} \ar[swap]{d}{j} & N \ar[swap]{d}{j'}
      \ar{ddr}{\alpha} &
      \\
      P \ar{r}{i'} \ar[swap]{rrd}{\beta} & Q &
      \\
      & & M
    \end{tikzcd}
  \]
  determines a corresponding commuting diagram
  \[
    \begin{tikzcd}[row sep=large,column sep=large]
      N|A \ar{r}{i_{A}} \ar[swap]{d}{c_T} & N \ar[swap]{d}{c_T'}
      \ar{ddr}{\alpha} &
      \\
      (N|A)/T \ar{r}{i_{A}'} \ar[swap]{rrd}{\beta g^{-1}f|_{A}} & N/T
      &
      \\
      & & M
    \end{tikzcd}
  \]
  which admits a unique map $\delta:N/T\to M$ such that the diagram
  commutes by Lemma \ref{simple bicartesian}.  On the other hand, this
  implies that $\gamma=\delta f^{-1}$ is the pushout of the original
  square by uniqueness of the pushout in $\Set$.

  To see that this square is coCartesian, note that every commuting
  diagram
  \[
    \begin{tikzcd}
      M \ar{rrd}{\alpha} \ar[swap]{rdd}{\beta} & &
      \\
      & N|A \ar[swap]{r}{i} \ar{d}{j} & N \ar{d}{j'}
      \\
      & P \ar[swap]{r}{i'} & Q
    \end{tikzcd}
  \]
  determines a corresponding commuting diagram
  \[
    \begin{tikzcd}[column sep=large,row sep=large]
      M \ar{rrd}{\alpha} \ar[swap]{rdd}{f|_{A}^{-1}g\beta} & &
      \\
      & N|A \ar[swap]{r}{i_{A}} \ar{d}{c_T} & N \ar{d}{c_T'}
      \\
      & (N|A)/T \ar[swap]{r}{i'_{A}} & N/T
    \end{tikzcd}
  \]
  which admits a unique map $\gamma:M\to N|A$ such that the diagram
  commutes by Lemma \ref{simple bicartesian}.  On the other hand, this
  implies that $\gamma$ is the pullback of the original square by
  uniqueness of the pullback in $\Set$.

  In particular, we have shown that every diagram
  $P\overset{i'}{\hookrightarrow}Q\overset{j'}{\twoheadleftarrow}N$
  with arrows $i'\in\MM$ and $j'\in\EE$ determines a biCartesian
  square with new arrows
  $P\overset{j}{\twoheadleftarrow}M\overset{j}{\hookrightarrow}N$
  where which $i\in\MM$ and $j\in\EE$.

\end{proof}

\begin{pro}[Verifying Property 5]\label{proposition: property 5}
  Every diagram
  $\xymatrix{ P & \ar@{->>}[l]_{j} M \ar@{^{(}->}[r]^{i} & N}$ in $\E$
  with $i \in \MM$ and $j \in \EE$ can be completed to a biCartesian
  square
  \[
    \xymatrix{
      M \ar[r]^{i} \ar[d]_{j} & N\ar[d]^{j'} \\
      P \ar[r]^{i'} & Q.  }
  \]
  for some $Q \in \E$, $i' \in \MM$, $j' \in \EE$.
\end{pro}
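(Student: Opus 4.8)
The plan is to dualize the proof of Property 4, working with pushouts and restrictions in place of pullbacks and contractions. Since $j \in \EE$, factor it as $M \twoheadrightarrow M/T \overset{\sim}{\rightarrow} P$ for some $T \subseteq \wt{E}_M$; write $f : M/T \to P$ for this isomorphism and $c_T : M \to M/T$ for the canonical contraction from Lemma \ref{canonical maps}. Since $i \in \MM$, factor it as $M \overset{\sim}{\rightarrow} N|S \hookrightarrow N$ for some $S \subseteq E_M$ with $*_M \in S$; write $g : M \to N|S$ for this isomorphism and $i_S : N|S \hookrightarrow N$ for the canonical restriction. Transporting $T$ across the isomorphism $g$, set $T' := g(T) \subseteq \wt{E}_{N|S} \subseteq \wt{E}_N$. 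The candidate for $Q$ is then $Q := N/T'$, with $i' : P \to Q$ built from the isomorphism $M/T \cong (N|S)/T'$ (via $g$, using Lemma \ref{iso restriction and contraction}) composed with the canonical map $(N|S)/T' \hookrightarrow N/T'$ coming from Lemma \ref{minors commute}, precomposed with $f^{-1}$; and $j' := c'_{T'} : N \twoheadrightarrow N/T'$ the canonical contraction.

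First I would assemble, exactly as in Proposition \ref{proposition: property 4}, the "stacked" commuting diagram relating $M$, $N|S$, $N$, $M/T$, $(N|S)/T'$, $N/T'$, $P$, and $Q$, where the top-right square is the instance of Lemma \ref{simple bicartesian} for $T' \subseteq S \subseteq E_N$ (noting $*_N \in S \setminus T'$ since $*_M \in S$ and $*_M \notin T$), and the remaining squares are built from isomorphisms. Then I would check that the resulting outer square
\[
  \xymatrix{
    M \ar[r]^{i} \ar[d]_{j} & N \ar[d]^{j'} \\
    P \ar[r]^{i'} & Q
  }
\]
commutes and that $i' \in \MM$, $j' \in \EE$: the map $i'$ factors through the isomorphism $(N|S)/T' \cong M/T \cong P$ followed by a canonical restriction inclusion (rewritten via Lemma \ref{minors commute} as $(N|S)/T' = (N/T')|S \hookrightarrow N/T'$), hence lies in $\MM$; and $j' = c'_{T'}$ lies in $\EE$ by definition.

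For biCartesianness I would again reduce to Lemma \ref{simple bicartesian}. Given any test cone over $P \leftarrow M \to N$ (for the coCartesian property) or under $P \to Q \leftarrow N$ (for the Cartesian property), I would use the isomorphisms $g$ and $f$ to convert it into a test cone for the biCartesian square of Lemma \ref{simple bicartesian}, extract the unique mediating map there, and transport it back; uniqueness is then inherited, since applying the forgetful functor $\FF : \E \to \Set$ identifies the mediating map with the (unique) pushout/pullback map in $\Set$, and a strong map is determined by its underlying set map. The only genuine points requiring care — and the main obstacle — are the bookkeeping claims $*_N \in S \setminus T'$, $T' \subseteq S$, and the compatibility of restriction-then-contraction with the isomorphisms $f, g$, all of which follow from Lemmas \ref{iso restriction and contraction} and \ref{minors commute} but must be tracked precisely through the diagram. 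Since this argument is the exact mirror image of Proposition \ref{proposition: property 4}, I would present it concisely, emphasizing the construction of $Q$, $i'$, $j'$ and then invoking Lemma \ref{simple bicartesian} together with the uniqueness-via-$\FF$ argument to conclude.
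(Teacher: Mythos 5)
Your proposal is correct and is essentially the paper's own argument: factor $i$ and $j$ through the canonical restriction and contraction, transport $T$ across the isomorphism onto a subset $T'\subseteq S\subseteq E_N$, take $Q=N/T'$ with $j'$ the canonical contraction and $i'$ the composite of the induced isomorphism $P\cong (N|S)/T'$ with $(N|S)/T'=(N/T')|S\hookrightarrow N/T'$, and then reduce biCartesianness to Lemma \ref{simple bicartesian} together with uniqueness of the mediating map obtained by applying the forgetful functor to $\Set$. The only (harmless) slip is notational: in the factorization of $i$ the set $S$ is a subset of $E_N$ containing $\ast_N$, not a subset of $E_M$.
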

\begin{proof}
  Since $i\in\MM$ and $j\in\EE$, we have factorizations of $i$ and $j$
  from which we obtain the following commuting diagram:
  \[
    \begin{tikzcd}[column sep=large,row sep=large]
      M \ar{r}{f} \ar[swap]{d}{c_T} & N|S \ar{r}{i_S} \ar{d}{c_{fT}} &
      N \ar{d}{c_{fT}'}
      \\
      M/T \ar[swap]{d}{g} \ar{r}{f|_{E_M\setminus T}} & (N\vert S)/fT
      \ar{r}{i_S'} \ar{d}{gf|_{E_M\setminus fT}^{-1}} & N/fT
      \\
      P \ar[swap]{r}{\id} & P &
    \end{tikzcd}
  \]
  Now note that the following square is a commuting square in $\E$,
  where $i'=i_S'f|_{E_M\setminus fT}g^{-1}$ and $j'=c_{fT}'$:
  \[
    \begin{tikzcd}
      M \ar{r}{i} \ar[swap]{d}{j} & N \ar{d}{j'}
      \\
      P \ar[swap]{r}{i'} & N/fT
    \end{tikzcd}
  \]
  We have factorizations
  $N\xrightarrow{c_{fT}'}N/fT\xrightarrow{\id}N/fT$ and
  $P\xrightarrow{f|_{E_M\setminus
      fT}g^{-1}}(N|S)/fT\xrightarrow{i_S'}N/fT$.  Moreover
  $f|_{E_M\setminus fT}g^{-1}$ is an isomorphism by Lemma \ref{iso
    restriction and contraction}.  Hence $i'\in\MM$ and $j'\in\EE$.

  To see that this square is Cartesian, note that every commuting
  diagram
  \[
    \begin{tikzcd}
      M \ar{r}{i} \ar[swap]{d}{j} & N \ar[swap]{d}{j'}
      \ar{ddr}{\alpha} &
      \\
      P \ar{r}{i'} \ar[swap]{rrd}{\beta} & N/fT &
      \\
      & & Q
    \end{tikzcd}
  \]
  determines a corresponding commuting diagram
  \[
    \begin{tikzcd}[row sep=large,column sep=large]
      N|S \ar{r}{i_S} \ar[swap]{d}{c_{fT}} & N \ar[swap]{d}{c_{fT}'}
      \ar{ddr}{\alpha} &
      \\
      (N|S)/fT \ar{r}{i_S'} \ar[swap]{rrd}{\beta gf|_{E_M\setminus
          T}^{-1}} & N/fT &
      \\
      & & Q
    \end{tikzcd}
  \]
  which admits a unique map $\gamma:N/T\to Q$ such that the diagram
  commutes by Lemma \ref{simple bicartesian}.  On the other hand, this
  implies that $\gamma$ is the pushout of the original square by
  uniqueness of the pushout in $\Set$.

  To see that this square is coCartesian, note that every commuting
  diagram
  \[
    \begin{tikzcd}
      Q \ar{rrd}{\alpha} \ar[swap]{rdd}{\beta} & &
      \\
      & M \ar[swap]{r}{i} \ar{d}{j} & N \ar{d}{j'}
      \\
      & P \ar[swap]{r}{i'} & N/fT
    \end{tikzcd}
  \]
  determines a corresponding commuting diagram
  \[
    \begin{tikzcd}[column sep=large,row sep=large]
      Q \ar{rrd}{\alpha} \ar[swap]{rdd}{f|_{E_M\setminus
          T}g^{-1}\beta} & &
      \\
      & N|S \ar[swap]{r}{i_S} \ar{d}{c_{fT}} & N \ar{d}{c_{fT}'}
      \\
      & (N|S)/fT \ar[swap]{r}{i'_S} & N/fT
    \end{tikzcd}
  \]
  which admits a unique map $\delta:Q\to N|S$ such that the diagram
  commutes by Lemma \ref{simple bicartesian}.  On the other hand, this
  implies that $\gamma=f^{-1}\delta$ is the pullback of the original
  square by uniqueness of the pullback in $\Set$.
\end{proof}

\begin{pro}[Verifying Property 3]\label{proposition: property 3}
  A commuting square in $\E$ with $i,i'\in\MM$ and $j,j'\in\EE$:
  \[
    \begin{tikzcd}
      M \ar{r}{i} \ar[swap]{d}{j} & N \ar{d}{j'}
      \\
      P \ar[swap]{r}{i'} & Q
    \end{tikzcd}
  \]
  is Cartesian if and only if it is coCartersian.
\end{pro}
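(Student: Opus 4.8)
The plan is to obtain Property 3 as a purely formal consequence of Properties 4 and 5 (already verified) together with the uniqueness of limits and colimits; the two implications are mirror images of each other. Keep the notation of the statement: a commuting square with top map $i\colon M\to N$, left map $j\colon M\to P$, right map $j'\colon N\to Q$, bottom map $i'\colon P\to Q$, where $i,i'\in\MM$ and $j,j'\in\EE$.

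For ``Cartesian $\Rightarrow$ coCartesian'' I would start from the cospan $P\xrightarrow{i'}Q\xleftarrow{j'}N$. Since $i'\in\MM$ and $j'\in\EE$, Proposition \ref{proposition: property 4} completes it to a biCartesian square with a new vertex $M'$ and admissible maps $P\xleftarrow{\hat\jmath}M'\xrightarrow{\hat\imath}N$. Being biCartesian, that square is in particular a pullback of $N\xrightarrow{j'}Q\xleftarrow{i'}P$; and so, by hypothesis, is the original square. Hence there is a unique isomorphism $u\colon M\to M'$ with $\hat\imath u=i$ and $\hat\jmath u=j$. Relabelling the vertex $M'$ along $u$ carries the biCartesian square exactly to the original square, and since a limit (resp.\ colimit) cone remains one after replacing an object by an isomorphic copy along the structure maps, the original square is biCartesian — in particular coCartesian.

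The reverse implication ``coCartesian $\Rightarrow$ Cartesian'' is handled symmetrically: starting from the span $P\xleftarrow{j}M\xrightarrow{i}N$ (which has $i\in\MM$, $j\in\EE$), Proposition \ref{proposition: property 5} completes it to a biCartesian square with a new vertex $Q'$ and admissible maps $P\xrightarrow{\hat\imath'}Q'\xleftarrow{\hat\jmath'}N$; this square is a pushout of $P\xleftarrow{j}M\xrightarrow{i}N$, as is the original square, so a unique isomorphism $v\colon Q'\to Q$ with $v\hat\imath'=i'$ and $v\hat\jmath'=j'$ identifies the two squares. Thus the original square is biCartesian, in particular Cartesian. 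This finishes the verification of Property 3, and with it the proof that $(\E,\MM,\EE)$ is proto-exact.

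I do not anticipate a genuine obstacle here: all of the substantive content lives in Lemma \ref{simple bicartesian} and in Propositions \ref{proposition: property 4} and \ref{proposition: property 5}. The only point needing (minimal) care is that the biCartesian square produced by Proposition \ref{proposition: property 4} (resp.\ \ref{proposition: property 5}) really is a completion of the \emph{given} cospan (resp.\ span) rather than of some unrelated diagram; this is precisely what makes the comparison map into it an isomorphism, by uniqueness of pullbacks (resp.\ pushouts).
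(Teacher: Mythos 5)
Your proposal is correct and follows essentially the same route as the paper: both deduce Property 3 from Propositions \ref{proposition: property 4} and \ref{proposition: property 5} together with uniqueness of pullbacks/pushouts up to isomorphism, transporting the constructed biCartesian square onto the given one. Your write-up merely spells out the comparison isomorphisms ($u$ and $v$) that the paper leaves implicit.
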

\begin{proof}
  Suppose the above square is either Cartesian or coCartesian.  By the
  previous propositions, both
  $P\overset{j}{\twoheadleftarrow}M\overset{i}{\hookrightarrow}N$ and
  $P\overset{i'}{\hookrightarrow}Q\overset{j'}{\twoheadleftarrow}N$
  can be completed to biCartesian squares in $\E$ having all arrows
  from $\MM$ and $\EE$.  On the other hand, pullback and pushout
  objects are unique up to isomorphism.  Thus the original square is
  necessarily biCartesian.  Hence Cartesian and coCartesian are
  equivalent for all such squares.
\end{proof}

Propositions \ref{proposition: property 1}, \ref{proposition: property
  2}, \ref{proposition: property 3}, \ref{proposition: property 4},
and \ref{proposition: property 5} thus complete our first proof of
Theorem A.

\section{$\MS$ as a proto-exact category via $\mathbb{B}$-modules} \label{MS_Bmodules}

In this section, we present another proof showing that $\MS$ is a proto-exact category by appealing to the recent work \cite{CGM} of C.~Crowley, N.~Giansiracusa, and J.~Mundinger. Our motivation of introducing the second proof is to shed some light on generalizing the current work to the case of Hopf algebras for matroids over hyperfields introduced by the authors of the current paper in \cite{EJS}. 

\subsection{Matroids as $\BB$-modules}

In \cite{CGM} the authors give a very useful characterization of matroids and strong maps in terms of $\BB$-modules, where $\BB$ denotes the Boolean semifield $\BB=\{0,1\}$ with 
\[
1 \cdot 1 = 1, \, 0 \cdot 1=1 \cdot 0 = 0 \cdot 0 =0 \; \; \; \; \; 0+0=0, \, 0+1=1+0=1+1=1.
\] 
We proceed to review their construction. For a finite set $E$, we denote by $\BB^E$ the free $\BB$-module on $E$, with standard basis $\{ e_i \}, i \in E$. The linear dual $\on{Hom}_{\BB}(\BB^E, \BB)$ is denoted $(\BB^{E})^{\vee}$, and has dual basis $\{ x_i \}, i \in E$, with $\langle e_i, x_j \rangle = \delta_{ij}$.

\begin{rmk}\label{remark: free module}
In fact, the notion of free $\mathbb{B}$-modules is rather subtle. For instance, any `free $\mathbb{B}$-module' of dimension $n$ does not have to be isomorphic to $\mathbb{B}^n$. See, \cite[\S 2]{MZ} for details. Nonetheless, in this paper, we restrict ourselves to the case of free $\mathbb{B}$-modules of the form $\mathbb{B}^E$. 
\end{rmk}

From now one, all matrids are assumed to be pointed. Given a matroid $M$, let $L_M \subseteq \BB^{E_M}$ be the $\BB$-submodule generated by the support vectors of the cocircuits of $M$. They show:

\begin{pro}[\cite{CGM}]
	Let $M$ be a matroid. Then $M$ is completely determined by the $\BB$-module $L_M$ together with its embedding $L_M \hookrightarrow \BB^{E_M}$. 
\end{pro}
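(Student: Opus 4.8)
\emph{Proof strategy.} The plan is to reconstruct $M$ from the pair $\bigl(L_M,\ L_M\hookrightarrow\BB^{E_M}\bigr)$ in three steps: first recover the cocircuits of $M$, then its hyperplanes, then its flats, and finally appeal to the cryptomorphism between the flats of a matroid and its flats operator. To begin, note what the data records: the target $\BB^{E_M}$ of the embedding supplies the pointed ground set $E_M$, and — identifying a vector of $\BB^{E_M}$ with its support and using $1+1=1$ in $\BB$ — the module addition is union of supports while the module order ($v\le w$ iff $v+w=w$) is inclusion of supports. Since $L_M$ is generated by the supports of the cocircuits of $M$, it is thereby recorded as the family of all subsets of $E_M$ that are unions of cocircuit supports (the empty union being $\emptyset$).

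\textbf{Step 1: cocircuits.} I would recover the cocircuits of $M$ as exactly the minimal nonzero elements of $L_M$ in the module order. Indeed, if $v\in L_M$ is minimal and nonzero, then $v$ is a union of cocircuit supports, each of which is a nonzero element of $L_M$ lying $\le v$ and hence equal to $v$ by minimality; so $v$ is itself a cocircuit support. Conversely, if $C$ is a cocircuit and $\emptyset\ne V\subseteq C$ with $V\in L_M$, then $V$ is a union of cocircuit supports all contained in $C$; but the cocircuits of a matroid form an antichain under inclusion (equivalently, they are the circuits of the dual matroid $M^{*}$), so $V=C$ and $C$ is minimal. This is the only step invoking genuine matroid content.

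\textbf{Step 2: hyperplanes and flats.} From the cocircuits I would recover the hyperplanes of $M$ via the standard bijection $H\mapsto E_M\setminus H$ between hyperplanes and cocircuits — here the concrete embedding is essential, since one complements inside the ambient set $E_M$. Then $\mc{F}(M)$ is recovered as the family of all intersections of subfamilies of hyperplanes (the empty intersection being $E_M$), using the standard fact that every flat is the intersection of the hyperplanes containing it. Two remarks: in the degenerate case $L_M=0$ (matroids of rank $0$) there are no cocircuits and no hyperplanes and $\mc{F}(M)=\{E_M\}$, matching $\sigma(\emptyset)=E_M$; and since an element of $E_M$ is a loop precisely when it lies in no cocircuit, the point $\ast_M$ — which occurs in no support in $L_M$ — is automatically recovered as a loop, so the reconstructed object is genuinely a \emph{pointed} matroid. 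Finally the flats operator is reconstructed by $\sigma_M(S)=\bigcap\set{F\in\mc{F}(M)}{S\subseteq F}$, which is exactly the flats-to-operator direction of the cryptomorphism satisfying (F1)--(F4) recalled earlier; hence $M$ is completely determined.

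\textbf{Main obstacle.} I expect all the real content to be concentrated in the antichain argument of Step 1 (together with the mild bookkeeping needed to keep the point $\ast_M$ in its proper place); Step 2 and the final reconstruction are routine applications of well-known matroid cryptomorphisms (cocircuits $\leftrightarrow$ complements of hyperplanes, flats $=$ intersections of hyperplanes). The one conceptual point worth stressing in the write-up is \emph{why the embedding is part of the data and not a redundancy}: the abstract $\BB$-module $L_M$ is just a finite join-semilattice and remembers neither $E_M$ nor which coordinate is which, whereas the reconstruction above reads cocircuits off as concrete subsets of $E_M$ and then complements within $E_M$.
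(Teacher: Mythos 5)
Your reconstruction is correct, but note that the paper itself offers no proof of this proposition: it is quoted from \cite{CGM}, so there is no in-text argument to compare yours against. Judged on its own merits, your three-step route is sound: the identification of cocircuit supports with the minimal nonzero elements of $L_M$ (via the antichain property of cocircuits, i.e.\ circuits of $M^*$), the passage to hyperplanes by complementation inside $E_M$, and the recovery of $\mc{F}(M)$ as intersections of hyperplanes are all standard and correctly handled, including the rank-zero degenerate case and the remark that the basepoint, being a loop, never occurs in a support. One small streamlining worth knowing (and essentially how \cite{CGM} packages the statement): you can bypass the minimal-element/antichain step entirely, since the supports of elements of $L_M$ are exactly the unions of cocircuits of $M$, and a subset of $E_M$ is a union of cocircuits if and only if its complement is a flat; hence $\mc{F}(M)=\set{E_M\setminus \on{supp}(v)}{v\in L_M}$ directly, after which the closure operator is recovered as you describe. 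Your closing observation that the embedding is genuinely needed (the abstract join-semilattice $L_M$ alone remembers neither $E_M$ nor the coordinates) is apt and worth keeping.
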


For matroids $N, M$, a map $f: E_N \rightarrow E_M$, induces a $\BB$-module map $$f_*: (\BB^{E_N})^{\vee} \rightarrow (\BB^{E_M}) ^{\vee}$$ $$f_* (x_i) = x_{f(i)}.$$ Taking the transpose (dual) we obtain a $\BB$-module map \begin{equation} \label{ind_map} f^{\vee}_* : \BB^{E_M} \rightarrow \BB^{E_N}. \end{equation}

\begin{pro}[\cite{CGM}]
	$f: E_N \rightarrow E_M$ defines a strong map if and only if $f^{\vee}_*(L_M) \subseteq L_N$.
\end{pro}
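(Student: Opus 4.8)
The plan is to identify $f^\vee_*$ with the set-theoretic preimage operation and then read off both sides of the claimed equivalence as statements about preimages of flats. First I would compute $f^\vee_*$ explicitly. Pairing $f^\vee_*(v)$ for $v \in \BB^{E_M}$ against the dual basis vector $x_i$, $i \in E_N$, and using that $f^\vee_*$ is the transpose of $f_*$ gives
\[
\langle f^\vee_*(v), x_i \rangle = \langle v, f_*(x_i)\rangle = \langle v, x_{f(i)} \rangle = v_{f(i)},
\]
so $(f^\vee_*(v))_i = v_{f(i)}$; that is, $f^\vee_*$ is pullback of functions along $f$. Consequently, on the level of supports, $\on{supp}(f^\vee_*(v)) = f^{-1}(\on{supp}(v))$, so $f^\vee_*$ realizes the preimage operation $S \mapsto f^{-1}(S)$.

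Next I would describe the supports appearing in $L_M$. Since addition in $\BB$ is idempotent, the $\BB$-submodule generated by the cocircuit support vectors consists exactly of the characteristic vectors of unions of cocircuits of $M$ (with $0$ the empty union). By the standard matroid duality that cocircuits are the complements of hyperplanes and that every flat is the intersection of the hyperplanes containing it, a subset is a union of cocircuits if and only if its complement is an intersection of hyperplanes, i.e.\ a flat. Hence $v \in L_M$ if and only if $E_M \setminus \on{supp}(v)$ is a flat of $M$, and likewise for $L_N$; complementation thus gives a bijection between flats of $M$ and supports of vectors of $L_M$.

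I would then reduce the inclusion to its generators. As $f^\vee_*$ is a $\BB$-module homomorphism and $L_M$ is generated by the cocircuit vectors $\chi_{C^*}$, we have $f^\vee_*(L_M) \subseteq L_N$ if and only if $f^\vee_*(\chi_{C^*}) \in L_N$ for every cocircuit $C^*$ of $M$. By the explicit form of $f^\vee_*$, $\on{supp}(f^\vee_*(\chi_{C^*})) = f^{-1}(C^*)$, and by the description of $L_N$ above this vector lies in $L_N$ precisely when $E_N \setminus f^{-1}(C^*) = f^{-1}(E_M \setminus C^*) = f^{-1}(H)$ is a flat of $N$, where $H = E_M \setminus C^*$ is the hyperplane complementary to $C^*$. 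As $C^*$ ranges over the cocircuits of $M$, $H$ ranges over all hyperplanes, so $f^\vee_*(L_M) \subseteq L_N$ is equivalent to $f^{-1}(H) \in \mathcal{F}(N)$ for every hyperplane $H$ of $M$.

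Finally I would match this with the strong-map condition of Definition \ref{def:pointed_strong}, namely that $f^{-1}(A) \in \mathcal{F}(N)$ for every flat $A$ of $M$. One implication is immediate since hyperplanes are flats. For the converse, writing a flat as $A = \bigcap_i H_i$ gives $f^{-1}(A) = \bigcap_i f^{-1}(H_i)$, an intersection of flats of $N$ and hence a flat, because flats are closed under intersection. This establishes the equivalence; the basepoint requirement $f(*_N) = *_M$ is part of the datum of a pointed map and is compatible with both sides, since the loop $*_M$ lies in no cocircuit and so its coordinate never occurs among vectors of $L_M$. I expect the only real obstacle to be the identification of $L_M$: correctly carrying out the Boolean linear algebra so that the submodule generated by cocircuits becomes the family of unions of cocircuit supports, and then matching these with complements of flats via matroid duality. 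Once $f^\vee_*$ has been recognized as preimage, the remaining steps are formal.
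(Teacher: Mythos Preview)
Your argument is correct. The paper itself does not supply a proof of this proposition; it is quoted from \cite{CGM} without proof, so there is nothing to compare against. Your identification of $f^\vee_*$ as pullback along $f$, the description of $L_M$ as the characteristic vectors of complements of flats (via the cocircuit--hyperplane duality and idempotence of Boolean addition), and the reduction of the flat-preimage condition to hyperplanes are all sound, and the edge case $A=E_M$ (empty intersection of hyperplanes) is handled by $f^{-1}(E_M)=E_N$. Your remark on the basepoint is also accurate: the condition $f^\vee_*(L_M)\subseteq L_N$ only forces $f(*_N)$ to be a loop of $M$, not the distinguished loop, so the pointedness of $f$ is indeed an ambient assumption rather than a consequence.
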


\begin{mydef}
	Let $\Emb$ denote the category with:
	\begin{itemize}
		\item Objects are embedded sub-modules $L\subseteq \BB^{E}$ for a finite set $E$. 
		\item  Morphisms from $L \subseteq \BB^E$ to $K \subseteq \BB^F$ are commutative squares 
		\[
		\xymatrix{
			\BB^E \ar@{->}[r] & \BB^F  \\
			L \ar@{->}[r] \ar@{^{(}->}[u] &  K \ar@{^{(}->}[u] 
		}
		\]
	\end{itemize}
\end{mydef}

Then, one has the following:

\begin{pro}[\cite{CGM}] 
	There exists a faithful functor $$ \L: (\MS)^{op} \rightarrow \Emb $$ which assigns to a matroid $M$ the embedded sub-module $L_M \subseteq \BB^{E_M}$ and to a strong map of matroids $f: N \rightarrow M $ the induced map \eqref{ind_map}.
\end{pro}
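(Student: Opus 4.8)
The statement repackages the two results of \cite{CGM} recalled above in categorical language, so the plan is: (i) check that the stated rules on objects and morphisms actually land in $\Emb$; (ii) verify the contravariant functor axioms; (iii) verify faithfulness by recovering the underlying set map of a strong map from its induced $\BB$-module map. On objects there is nothing to check: the first proposition of \cite{CGM} above says that a pointed matroid $M$ gives rise to the embedded submodule $L_M\subseteq\BB^{E_M}$, which is by definition an object of $\Emb$. For morphisms, a strong map $f:N\to M$ in $\MS$ — equivalently, a morphism $M\to N$ in $(\MS)^{op}$ — induces $f_*:(\BB^{E_N})^{\vee}\to(\BB^{E_M})^{\vee}$ with $f_*(x_i)=x_{f(i)}$, and dualizing yields the map $f^{\vee}_*:\BB^{E_M}\to\BB^{E_N}$ of \eqref{ind_map}. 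By the second proposition of \cite{CGM} above, $f$ being a strong map is \emph{equivalent} to $f^{\vee}_*(L_M)\subseteq L_N$; hence $f^{\vee}_*$ restricts to a $\BB$-module map $L_M\to L_N$, and the resulting square
\[
\xymatrix{
\BB^{E_M} \ar@{->}[r]^{f^{\vee}_*} & \BB^{E_N}  \\
L_M \ar@{->}[r] \ar@{^{(}->}[u] &  L_N \ar@{^{(}->}[u]
}
\]
is a bona fide morphism $\L(M)\to\L(N)$ in $\Emb$.

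For the functor axioms, the identity $\id_M:M\to M$ induces $(\id_M)_*(x_i)=x_i$, so $(\id_M)^{\vee}_*=\id_{\BB^{E_M}}$ and $\L$ preserves identities. Given composable strong maps $N\xrightarrow{f}M\xrightarrow{g}P$ in $\MS$, evaluation on the dual basis gives $(g\circ f)_*(x_i)=x_{g(f(i))}=g_*(f_*(x_i))$, so $(g\circ f)_*=g_*\circ f_*$; dualizing reverses the order, giving $(g\circ f)^{\vee}_*=f^{\vee}_*\circ g^{\vee}_*$. This is exactly the composition law for a functor $(\MS)^{op}\to\Emb$, and the induced squares compose correctly in $\Emb$ since the maps on the submodules $L_{(-)}$ are restrictions of equal maps on the ambient free modules.

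It remains to prove faithfulness. Fixing $j\in E_M$ and $i\in E_N$, from $f_*(x_i)=x_{f(i)}$ one reads off that $f^{\vee}_*$ carries the standard basis vector $e_j\in\BB^{E_M}$ to $\sum_{i\in f^{-1}(j)}e_i\in\BB^{E_N}$. Thus $f^{\vee}_*$ determines the fibres $f^{-1}(j)$ for every $j\in E_M$, and hence the function $f$ itself. Consequently, if $f,g:N\to M$ are strong maps with $\L(f)=\L(g)$ as morphisms of $\Emb$ — in particular with $f^{\vee}_*=g^{\vee}_*$ — then $f=g$, so $\L$ is faithful. Essentially everything here is bookkeeping; the only point deserving a moment's care is this last one, where one uses that dualizing over $\BB$ loses no information, so that the induced $\BB$-module map genuinely remembers the strong map. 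Since all modules in play are free on finite sets this is immediate from the matrix description, and I anticipate no real obstacle.
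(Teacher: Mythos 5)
Your proposal is correct. Note that the paper itself offers no proof of this proposition: it is recalled from \cite{CGM}, so there is nothing internal to compare against. Your verification is the natural one and is sound — in particular the faithfulness step is right, since the transpose formula $f^{\vee}_*(e_j)=\sum_{i\in f^{-1}(j)}e_i$ recovers every fibre of $f$ from the ambient map $\BB^{E_M}\to\BB^{E_N}$, which is part of the data of a morphism in $\Emb$, and the contravariant composition law $(g\circ f)^{\vee}_*=f^{\vee}_*\circ g^{\vee}_*$ together with the inclusion $f^{\vee}_*(L_M)\subseteq L_N$ from the cited criterion gives functoriality into $\Emb$.
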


The functor $\L$ is not full, and as the authors point out in \cite{CGM}, a general morphism in $\Emb$ between two objects in the essential image of $\L$ may be viewed as a "multi-valued" strong map. One also obtains the following pleasant characterization of the restriction and contraction operations:

\begin{pro}[\cite{CGM}]\label{proposition: Noah's minor}
	Let $M$ be a matroid, and $S \subseteq E_M$. 
	\begin{enumerate}
		\item $$\L(M \vert S) = \pi_S(L_M) \subseteq \BB^S, $$ where $$ \pi_S: \BB^{E_M} \rightarrow \BB^S $$  is the canonical projection. Under the functor $\L$, the commutative square
		$$
		\xymatrix{
			\BB^{E_M} \ar@{->}[r]^{\pi_S} & \BB^S  \\
			L_M \ar@{->}[r] \ar@{^{(}->}[u] &  \pi_S(L_M) \ar@{^{(}->}[u] 
		}
		$$
		corresponds to the canonical strong inclusion $ M  \hookleftarrow M \vert S$
		\item $$ \L(M/S) = L_M \cap \BB^{E_M \backslash S} \subseteq  \BB^{E_M \backslash S}, $$ where $\BB^{E_M \backslash S}$ denotes the $\BB$-submodule of $\BB^{E_M}$ consiting of vectors having $0$ in all components corresponding to $S$. Under the functor $\L$, the commutative square
		$$
		\xymatrix{
			\BB^{E_M \backslash S} \ar@{->}[r]^{\iota} & \BB^{E_M}  \\
			L_M \cap \BB^{E_M \backslash S}  \ar@{->}[r] \ar@{^{(}->}[u] &  L_M \ar@{^{(}->}[u] 
		}
		$$
		corresponds to the canonical strong map $M \rightarrow M/S$. 
	\end{enumerate}
\end{pro}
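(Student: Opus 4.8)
The plan is to reduce everything to a single concrete description of the $\BB$-module $L_M$ and then read off both parts of the statement. First I would record that, because addition in $\BB$ is idempotent and the scalars act trivially, the $\BB$-submodule of $\BB^E$ generated by a finite collection of vectors is exactly the set of their coordinatewise joins over all sub-collections; equivalently, it is the set of indicator vectors $\mathbb{1}_X$ with $X$ running over all unions of the supports of the given vectors (the empty union giving $0$). Applying this to the generating set of $L_M$ and invoking the standard fact that the cocircuits of $M$ are precisely the complements $E_M\setminus H$ of the hyperplanes $H$ of $M$, I would then show that a subset $X\subseteq E_M$ is a union of cocircuits if and only if $E_M\setminus X$ is a flat: one direction because an intersection of flats is a flat, the other because every flat is the intersection of the hyperplanes containing it. This yields the description
\[
L_M=\{\,\mathbb{1}_{E_M\setminus F}\ :\ F\in\mathcal{F}(M)\,\}\subseteq\BB^{E_M},
\]
with $F=E_M$ giving $0$ — an inclusion-reversing bijection between $L_M$ and the lattice of flats of $M$. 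This identification is the only genuinely matroid-theoretic ingredient, and the equivalence ``union of cocircuits $\Leftrightarrow$ complement of a flat'' is the step I expect to require the most care, though it is short.

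With this in hand I would carry out the two minor computations using Definition \ref{restriction-contraction}. For restriction, $\mathcal{F}(M|S)=\{F\cap S:F\in\mathcal{F}(M)\}$, hence $L_{M|S}=\{\mathbb{1}_{S\setminus(F\cap S)}:F\in\mathcal{F}(M)\}=\{\mathbb{1}_{S\setminus F}:F\in\mathcal{F}(M)\}$; on the other hand $\pi_S(\mathbb{1}_{E_M\setminus F})=\mathbb{1}_{(E_M\setminus F)\cap S}=\mathbb{1}_{S\setminus F}$, so $\pi_S(L_M)=L_{M|S}=\L(M|S)$. For contraction, $\mathcal{F}(M/S)=\{F\setminus S:S\subseteq F\in\mathcal{F}(M)\}$, and since $S\subseteq F$ gives $(E_M\setminus S)\setminus(F\setminus S)=E_M\setminus F$, we obtain $L_{M/S}=\{\mathbb{1}_{E_M\setminus F}:S\subseteq F\in\mathcal{F}(M)\}$; but $L_M\cap\BB^{E_M\setminus S}$ is exactly the set of those $\mathbb{1}_{E_M\setminus F}\in L_M$ with $(E_M\setminus F)\cap S=\emptyset$, i.e. with $S\subseteq F$, so $L_M\cap\BB^{E_M\setminus S}=L_{M/S}=\L(M/S)$.

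It then remains to identify the two commutative squares with the images under $\L$ of the canonical maps, and here I would just unwind \eqref{ind_map}. For $i_S\colon M|S\hookrightarrow M$, whose underlying set map is the inclusion $S\hookrightarrow E_M$, the induced dual map sends $x_i\mapsto x_i$ for $i\in S$, i.e. it is the inclusion of $(\BB^S)^\vee$ into $(\BB^{E_M})^\vee$ along the corresponding part of the dual basis; its transpose is precisely the coordinate projection $\pi_S\colon\BB^{E_M}\to\BB^S$, which by the previous paragraph restricts to $L_M\to L_{M|S}$, giving the first square. Dually, for $c_S\colon M\to M/S$ the induced dual map collapses the coordinates indexed by $S$ onto the basepoint and fixes the rest, so its transpose is the coordinate inclusion $\iota\colon\BB^{E_M\setminus S}\hookrightarrow\BB^{E_M}$, which restricts to $L_{M/S}\hookrightarrow L_M$, giving the second square. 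Combined with the equalities $\L(M|S)=\pi_S(L_M)$ and $\L(M/S)=L_M\cap\BB^{E_M\setminus S}$, this proves both statements.
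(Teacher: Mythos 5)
The paper itself offers no proof of this proposition: it is imported verbatim from \cite{CGM} and then used as input to the second proof of Theorem A, so there is no internal argument to compare yours against line by line. Judged on its own, your proposal is correct and is essentially the argument underlying \cite{CGM}: the reduction to the description $L_M=\{\,\text{indicator vectors of } E_M\setminus F : F\in\mc{F}(M)\,\}$ is exactly the right key step (joins of cocircuit vectors are complements of flats, since cocircuits are complements of hyperplanes, every flat is an intersection of hyperplanes, and intersections of flats are flats), and your two computations with Definition \ref{restriction-contraction} then give $\pi_S(L_M)=\L(M\vert S)$ and $L_M\cap\BB^{E_M\setminus S}=\L(M/S)$ correctly.

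The one sentence to tighten is the identification of the contraction square with $\L(c_S)$. Under the paper's literal convention that $\BB^{E_M}$ is free on the whole pointed ground set (so there is an honest basis vector $e_{*}$), the transpose of $(c_S)_*$ is \emph{not} the coordinate inclusion $\iota$: it sends $e_j\mapsto e_j$ for $j\in E_M\setminus(S\cup\{*_M\})$, but $e_{*}\mapsto e_{*}+\sum_{i\in S}e_i$, because $c_S^{-1}(*_M)=S\cup\{*_M\}$. This map agrees with $\iota$ on $L_{M/S}$ (every element there has vanishing $*$-coordinate, the point being a loop), and it is literally $\iota$ under the convention---surely the intended one here and in \cite{CGM}, cf.\ ``we think of the point as a zero element''---that the basepoint is identified with $0$, i.e.\ $x_{*_M}=0$ and the ambient free module is taken on $\wt{E}_M$. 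So either invoke that convention explicitly, or note that the discrepancy is concentrated in the $*$-coordinate and vanishes on the embedded submodules, which is all the statement in $\Emb$ requires; apart from this bookkeeping point your argument is complete.
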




\begin{mythm}
$\MS$ has the structure of a proto-exact category
\end{mythm}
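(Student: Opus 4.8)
The plan is to transport the proto-exact structure already established on $\E = \MS$ in Section \ref{MS_strong} through the functor $\L \colon (\MS)^{op} \to \Emb$, using the explicit descriptions of restriction and contraction in Proposition \ref{proposition: Noah's minor} to give a clean, module-theoretic verification of the axioms of Definition \ref{proto_exact}. Concretely, I would declare the admissible monomorphisms in $\Emb$ (restricted to the essential image of $\L$) to be the maps corresponding under $\L$ to matroid contractions $M \twoheadrightarrow M/S$, i.e.\ those squares whose bottom arrow is an inclusion $L_M \cap \BB^{E_M \setminus S} \hookrightarrow L_M$ induced by a coordinate inclusion $\BB^{E_M \setminus S} \hookrightarrow \BB^{E_M}$; dually, admissible epimorphisms correspond to restrictions $M|S \hookrightarrow M$, realized by coordinate projections $\pi_S \colon \BB^{E_M} \to \BB^S$ carrying $L_M$ onto $\pi_S(L_M)$. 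Since $\L$ is faithful and is an equivalence onto its image as far as these distinguished squares are concerned, it suffices to check the axioms on the $\Emb$-side, where they become elementary statements about $\BB$-submodules of free $\BB$-modules and coordinate projections/inclusions.

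The key steps, in order: (i) record that $\L$ reverses the roles of $\MM$ and $\EE$ — matroid restrictions (admissible epis in $\MS$) go to coordinate-projection squares, contractions (admissible monos in $\MS$) go to coordinate-inclusion squares — so that the pushout/pullback completion axioms (4) and (5) for $\MS$ become, respectively, pullback/pushout completion statements in $\Emb$; (ii) verify the zero object ($L = 0 \subseteq \BB^{\{*\}}$) and closure under composition, which on the module side amounts to the observations that a composite of coordinate projections is a coordinate projection and likewise for inclusions, together with $\pi_S(\pi_T(L)) = \pi_S(L)$ for $S \subseteq T$ and the dual intersection identity; (iii) prove the basic biCartesian square corresponding to Lemma \ref{simple bicartesian}, namely that for $T \subseteq S \subseteq E$ the square with corners $L$, $\pi_S(L)$, $L \cap \BB^{E \setminus T}$, $\pi_S(L \cap \BB^{E \setminus T}) = \pi_S(L) \cap \BB^{S \setminus T}$ is both a pullback and a pushout in $\Emb$ — this rests on the fact that coordinate projections and coordinate inclusions commute in the evident way (the module-theoretic shadow of Lemma \ref{minors commute}); (iv) deduce Properties 3, 4, 5 from (iii) exactly as in Section \ref{MS_strong}, by completing a given span/cospan of distinguished maps to a biCartesian square using the model square of (iii) and invoking uniqueness of pullbacks/pushouts of underlying $\BB$-modules (or of underlying pointed sets via the forgetful functor) to get the universal property in $\Emb$.

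The main obstacle I anticipate is step (iii) — or more precisely, making sure that the universal properties hold \emph{in $\Emb$} and not merely for the ambient free modules. The subtlety flagged in Remark \ref{remark: free module} (that $\BB$-modules behave badly, e.g.\ ``free modules'' need not be $\BB^n$) means one cannot simply cite generic (co)limit facts about $\BB$-modules; instead the argument must stay inside the concrete world of submodules of coordinate spaces $\BB^E$ with coordinate projections/inclusions as structure maps, and one must check directly that the candidate pushout/pullback submodule really is $\pi_S(L)$ or $L \cap \BB^{E \setminus T}$ and that a competing cone factors uniquely. In practice this reduces to the same diagram chases as in Lemma \ref{simple bicartesian}, reformulated via $\L$; the payoff, as the authors note, is that the argument is phrased entirely in terms of embedded $\BB$-submodules and should generalize to matroids over hyperfields along the lines of \cite{EJS, CGM}. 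I would therefore present the proof as: fix the classes $\MM, \EE$ in $\Emb$ as above, transport along the faithful functor $\L$, and verify (1)–(5) by the module-theoretic analogues of Lemmas \ref{iso}–\ref{simple bicartesian} and Propositions \ref{proposition: property 1}–\ref{proposition: property 5}, citing Proposition \ref{proposition: Noah's minor} at each step.
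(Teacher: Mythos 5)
Your plan is essentially the paper's own second proof: transport the verification to $\Emb$ along the faithful contravariant functor $\L$, use Proposition \ref{proposition: Noah's minor} to identify restrictions with coordinate projections and contractions with coordinate inclusions, and establish axioms (1)--(5) by exhibiting the explicit biCartesian square with corners $L$, $\pi_S(L)$, $L\cap\BB^{E\setminus T}$, $\pi_S(L\cap\BB^{E\setminus T})$, which is exactly the square the paper computes for the translated diagrams of kinds (4) and (5), with (3) following by comparison. The only corrections needed are two direction slips: in $\MS$ the restrictions are the admissible \emph{monomorphisms} and the contractions the admissible \emph{epimorphisms} (your step (i) states the reverse), and consequently, under the contravariant $\L$, axiom (4) translates to a \emph{pushout}-completion problem in $\Emb$ and axiom (5) to a \emph{pullback}-completion problem, not the other way around.
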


\begin{proof}
$\MS$ is pointed, with $(\{\ast \}, \ast)$ the zero object. We note that $\L((\{ \ast \}, \ast)) =0$ - the trivial $\BB$-module. This verifies property $(1)$ of Definition \ref{proto_exact}. The classes $\MM, \EE$ obviously contain all isomorphisms and are closed under composition, showing $(2)$ of Definition \ref{proto_exact}. To show the existence of the push-outs and pull-backs $(4), (5)$, it suffices to work in the category $\Emb$ via the embedding $\L$, keeping in mind that it is contravariant. 

Consider a diagram in $\MS$ of the kind considered in $(4)$. Applying $\L$, this becomes a diagram in $\Emb$ of the form
\[ \xymatrix{ \pi(L_M \cap \BB^{E_M \backslash T}) \subseteq \BB^{E_M \backslash (T \cup S)} & \ar@{->>}[l]_-\pi L_M \cap  \BB^{E_M \backslash T} \subseteq \BB^{E_M \backslash T} \ar@{^{(}->}[r]^-i  & L_M \subseteq B^{E_M}} \]
where $S, T \subseteq E_M$ are disjoint, and $\pi$ denotes the projection induced by the inclusion of sets $E_M \backslash (S \cup T) \subseteq E_M \backslash T$. The pushout of this diagram in $\Emb$ is easily seen to be 

\[ 
\xymatrix{
 \pi(L_M) \subseteq \BB^{E_M \backslash S} & \ar@{->>}[l]_-\pi L_M \subseteq \BB^{E_M} \\
 \pi(L_M \cap \BB^{E_M \backslash T}) \subseteq \BB^{E_M \backslash (T \cup S)}  \ar@{^{(}->}[u] & \ar@{->>}[l]_-\pi L_M \cap  \BB^{E_M \backslash T} \subseteq \BB^{E_M \backslash T}  \ar@{^{(}->}[u]  }
  \]
For a matroid $M$,  $\pi(L_M) \subseteq \BB^{E_M \backslash S}$ is isomorphic to $\L(M \vert S)$, which shows that the pullback of the diagram $(4)$ exists in $\MS$, and that the completing maps lie in $\MM, \EE$ as desired. 

Similarly, a diagram in $\MS$ of the kind $(5)$, becomes after applying $\L$ a diagram in $\Emb$ of the form
\[
\xymatrix{ \pi(L_M) \cap  \BB^{E_M \backslash (T \cup S)} \subseteq \BB^{E_M \backslash (T \cup S)}  \ar@{^{(}->}[r] & \pi(L_M) \subseteq \BB^{E_M \backslash S} &  \ar@{->>}[l]_-\pi  L_M \subseteq \BB^{E_M}   }
\]
for disjoint $S, T \subseteq E_M$. The pullback of this diagram in $\Emb$ is 
\[ 
\xymatrix{
 \pi(L_M) \subseteq \BB^{E_M \backslash S} & \ar@{->>}[l]_-\pi L_M \subseteq \BB^{E_M} \\
 \pi(L_M \cap \BB^{E_M \backslash T}) \subseteq \BB^{E_M \backslash (T \cup S)}  \ar@{^{(}->}[u] & \ar@{->>}[l]_-\pi L_M \cap  \BB^{E_M \backslash T} \subseteq \BB^{E_M \backslash T}  \ar@{^{(}->}[u]  }
  \]
This shows that the pushout of the diagram $(5)$ exists in $\MS$. Furthermore, comparing $(4), (5)$ shows that property $(3)$ holds. This completes the proof. 
\end{proof}

\begin{rmk}
We note the following:
\begin{enumerate}
\item The admissible sub-objects and quotient objects of $M \in \MS$ correspond respectively to matroids $M \vert S$ and $M/S$ for subsets $S \subseteq \tilde{E}_M$. 
\item The indecomposable objects of $\MS$ are precisely the connected pointed matroids.
\item The admissible sub-quotients of $M \in \MS$ are precisely the pointed minors of $M$. 
\item The forgetful functor $\FF: \MS \mapsto \Set$ is an exact functor of proto-exact categories. 
\end{enumerate}
\end{rmk}

The above proof shows that the biCartesian completions of the diagrams from Definition (\ref{proto_exact}) in $\MS$ are minors of the matroids in the diagrams. Let $\mathcal{M}$ be a collection of pointed matroids which is closed under taking pointed minors, and let $\MS(\mathcal{M})$ denote the full sub-category of $\MS$ generated by objects in $\mathcal{M}$. We then obtain

\begin{mythm} \label{minor_closed_subcat}
$\MS(\mathcal{M})$ has the structure of proto-exact category. It is a full sub-category of $\MS$. 
\end{mythm}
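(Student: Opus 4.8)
The plan is to transfer the proto-exact structure from $\MS$ to the full subcategory $\MS(\mathcal{M})$ essentially for free, using two facts already established: the admissible monomorphisms and epimorphisms of $\MS$ are, up to isomorphism, matroid restrictions and contractions, and the biCartesian completions of the diagrams in Definition \ref{proto_exact} constructed in the last two sections have as their new vertex a \emph{pointed minor} of a matroid occurring in the diagram. I would take $\MM(\mathcal{M})$ (respectively $\EE(\mathcal{M})$) to be the class of morphisms of $\MS(\mathcal{M})$ that lie in $\MM$ (respectively $\EE$) when viewed in $\MS$.

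First I would dispose of the trivial case $\mathcal{M}=\emptyset$ and assume $\mathcal{M}\neq\emptyset$, fixing some $M\in\mathcal{M}$. Since $\ast_M$ is a loop, $M\vert\{\ast_M\}$ is the zero matroid $(\{\ast\},\ast)$, which is therefore a pointed minor of $M$ and so an object of $\mathcal{M}$; as the factorizations $0\to M\vert\{\ast\}\hookrightarrow M$ and $M\twoheadrightarrow M/(E_M\setminus\{\ast_M\})\to 0$ of Proposition \ref{proposition: property 1} pass only through the zero matroid and $M$, property (1) holds in $\MS(\mathcal{M})$. Property (2) is immediate: $\MM(\mathcal{M})$ and $\EE(\mathcal{M})$ contain the isomorphisms of $\MS(\mathcal{M})$ and are closed under composition because $\MM,\EE$ are (Proposition \ref{proposition: property 2}) and composition in $\MS(\mathcal{M})$ is that of $\MS$.

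For properties (4) and (5), given a diagram of the appropriate shape in $\MS(\mathcal{M})$, the completion produced in Proposition \ref{proposition: property 4} (respectively Proposition \ref{proposition: property 5}) has new vertex $N\vert(j')^{-1}(S)$ (respectively $N/fT$), a restriction (respectively contraction) of the object $N\in\mathcal{M}$, hence a pointed minor of $N$ and thus an object of $\mathcal{M}$; moreover the four maps of the square lie in $\MM$ and $\EE$, hence in $\MM(\mathcal{M})$ and $\EE(\mathcal{M})$. That the completed square remains biCartesian \emph{relative to $\MS(\mathcal{M})$} is automatic from fullness: any competing object of $\MS(\mathcal{M})$ is also an object of $\MS$, the unique mediating morphism furnished by the universal property in $\MS$ is a morphism of $\MS(\mathcal{M})$ again by fullness, and its uniqueness is inherited. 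Since moreover every corner of this completion lies in $\mathcal{M}$, the square is biCartesian in $\MS(\mathcal{M})$. Property (3) then follows by the argument of Proposition \ref{proposition: property 3}: a square in $\MS(\mathcal{M})$ with maps in $\MM(\mathcal{M}),\EE(\mathcal{M})$ that is Cartesian or coCartesian must, by uniqueness of pullback and pushout objects in $\MS(\mathcal{M})$, be isomorphic to the biCartesian completion just described, and hence is itself biCartesian. The only point needing care is the distinction between universality in $\MS(\mathcal{M})$ and in $\MS$; it is rendered harmless by fullness together with the observation that every universal object in sight is a minor of a matroid from the diagram, so I expect no genuine obstacle.
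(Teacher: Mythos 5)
Your proposal is correct and follows essentially the same route as the paper, which simply observes that the biCartesian completions constructed in Propositions \ref{proposition: property 4} and \ref{proposition: property 5} are minors of matroids appearing in the diagram, so that for a minor-closed collection $\mathcal{M}$ the full subcategory $\MS(\mathcal{M})$ inherits the proto-exact structure by fullness; your write-up merely makes explicit the transfer of the universal properties and of Properties (1)--(3). The only point worth recording is that the degenerate case $\mathcal{M}=\emptyset$ must be excluded (an empty full subcategory has no zero object), while for nonempty minor-closed $\mathcal{M}$ the zero matroid is automatically an object, exactly as you note.
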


\section{Algebraic K-theory of matroids} \label{Kth}

\subsection{K-theory of proto-exact categories}

We begin by recalling the construction of the algebraic K-theory of a proto-exact category following \cite{DK, Hek}.  Let $\C$ be a
proto-exact category and let $\SS_n = \SS_n(\C)$ denote the maximal groupoid in the category of
diagrams of the form
\begin{equation}\label{eq:sdiag}
		\xymatrix{ 0 \ar@{^{(}->}[r] & A_{0,1} \ar@{^{(}->}[r] \ar@{->>}[d] & A_{0,2}
		\ar@{^{(}->}[r] \ar@{->>}[d] & \dots & A_{0,n-1} \ar@{^{(}->}[r] \ar@{->>}[d]&
		A_{0,n} \ar@{->>}[d]\\
		&0 \ar@{^{(}->}[r] & A_{1,2} \ar@{^{(}->}[r] \ar@{->>}[d] &\dots & A_{1,n-1} \ar@{^{(}->}[r] \ar@{->>}[d]
		& A_{1,n} \ar@{->>}[d]\\
		&& 0 & \ddots &\vdots & \vdots &\\
		&&&& A_{n-2,n-1} \ar@{^{(}->}[r]\ar@{->>}[d] & A_{n-2,n}\ar@{->>}[d]\\
		&& & & 0 \ar@{^{(}->}[r] & A_{n-1,n}\ar@{->>}[d]\\
		&& & & & 0
	}
\end{equation}
where all horizontal maps are in $\mathfrak{M}$ and all vertical maps in $\mathfrak{E}$, and all squares are required to be biCartesian.
For every $0 \le k \le n$, there is a functor
\[
	\partial_k : \SS_n \to \SS_{n-1}
\]
obtained by omitting in the diagram \eqref{eq:sdiag} the objects in the $k$th row and $k$th column and forming the composite of the
remaining morphisms. Similarly, for every $0 \le k \le n$, there is a functor
\[
	\sigma_k : \SS_{n} \to \SS_{n+1}
\]
given by replacing the $k$th row by two rows connected via identity maps and replacing the $k$th
column by two columns connected via identity maps. $\SS_*(\C)$ together with the $\partial_*, \sigma_*$ forms a simplicial object in the category of groupoids. 

\begin{mydef}  \label{kth_def}
The K-theory of $\C$ is defined by
\begin{equation}
K_n (\C) = \pi_{n+1} | \SS_{\bullet} \C |,
\end{equation}
where $|\SS_{\bullet} \C |$ denotes the geometric realization of $\SS_{\bullet} \C$.
\end{mydef}

\begin{rmk}
One may also develop K-theory of proto-exact categories via a version of Quillen's Q-construction. It is shown in \cite{Hek} that this approach leads to isomorphic K-groups. In particular, Theorem \ref{Kset}, whose original proof used the Q-construction, remains valid if the K-theory of $\Set$ is defined as in Definition \ref{kth_def}. 
\end{rmk}

The Grothendieck group $K_0(\C)$ can be described explicitly as the free group on symbols $[A], A \in \on{Iso}(\C)$, modulo the relations $[B] = [A][C]$ for each
admissible short exact sequence
		\[
		\ses{A}{B}{C}.
		\]
When $\C$ admits direct sums for which 
\[
\ses{A}{A\oplus B}{B} \; \textrm{ and } \ses{B}{A \oplus B}{A}
\]
are both admissible, which is the case for $\C=\MS$, $K_0(\C)$ is Abelian, and can be described as the free Abelian group on $[A] \in \on{Iso}(\C)$ modulo the relations $[B] = [A] + [C]$.
K-theory is functorial under exact functors, meaning that an exact functor between proto-exact categories $F: \C \mapsto \mathcal{D}$ induces group homomorphisms
\[
F_*: K_n (\C) \mapsto K_n (\mathcal{D})
\]
compatible with composition. 

\subsection{K-theory of $\MS$}

We begin by calculating the Grothendieck group of $\MS$. Given a non-zero pointed matroid $M$ and $e \in \tilde{E}_M$, we have an admissible short exact sequence
\[
\ses{M\vert e}{M}{M/e} 
\]
Iterating this procedure shows that the class of any $M \in \MS$ can be expressed as a sum of pointed matroids with one-element ground sets. There are two non-isomorphic such matroids, denoted ${\bf a, b}$, where $rk({\bf a})=1$ and $rk({\bf b})=0$ (i.e. ${\bf b}$ is a "non-zero pointed loop"). They span $K_0 (\MS)$ and are easily seen to be independent, since rank and ground set cardinality is additive in admissible short exact sequences. We have thus proved the following:

\begin{mythm}\label{theorem: K_0}
There is an isomorphism $$K_0(\MS) \rightarrow \mathbb{Z} \oplus \mathbb{Z}$$ determined by $$M \rightarrow (rk(M), \vert E_M \vert - rk(M))$$ for each pointed matroid $M$.
\end{mythm}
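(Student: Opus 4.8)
The plan is to exhibit an explicit surjection $K_0(\MS) \twoheadrightarrow \mathbb{Z} \oplus \mathbb{Z}$ and to show it is injective by producing enough elements of $\on{Iso}(\MS)$ whose classes are linearly independent. First I would establish that $K_0(\MS)$ is generated by the two one-element pointed matroids $\mathbf{a}$ (rank $1$) and $\mathbf{b}$ (rank $0$), using the admissible short exact sequence $\ses{M\vert e}{M}{M/e}$ for any $e \in \tilde{E}_M$ and inducting on $|\tilde{E}_M|$; since any pointed matroid on a one-element nonzero ground set is isomorphic to one of $\mathbf{a}, \mathbf{b}$, this shows $[\mathbf{a}], [\mathbf{b}]$ span. (One should note that $K_0(\MS)$ is abelian here, since $\MS$ admits split admissible extensions $\ses{A}{A\oplus B}{B}$ as recorded in Section \ref{Kth}.)

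Next I would construct the homomorphism in the stated direction. Define $\phi\colon \mathbb{Z}[\on{Iso}(\MS)] \to \mathbb{Z}\oplus\mathbb{Z}$ by $\phi([M]) = (rk(M), |\tilde{E}_M| - rk(M))$, where I write $|\tilde E_M|$ for $|E_M| - 1$ (the number of nonzero ground-set elements). The key point is that both $rk$ and $|\tilde E_M|$ are additive across admissible short exact sequences: an admissible extension $\ses{A}{B}{C}$ in $\MS$ is, up to isomorphism, of the form $\ses{M\vert S}{M}{M/S}$ for some $S \subseteq \tilde E_M$, and for matroid minors one has $rk(M) = rk(M\vert S) + rk(M/S)$ and, on ground sets, $\tilde E_M = S \sqcup (\tilde E_M \setminus S)$, so $|\tilde E_M| = |\tilde E_{M\vert S}| + |\tilde E_{M/S}|$; subtracting gives additivity of the corank-type quantity as well. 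Hence $\phi$ descends to $\bar\phi\colon K_0(\MS)\to\mathbb{Z}\oplus\mathbb{Z}$. Since $\bar\phi([\mathbf{a}]) = (1,0)$ and $\bar\phi([\mathbf{b}]) = (0,1)$, the map $\bar\phi$ is surjective, and combined with the spanning statement it shows $K_0(\MS)$ is generated by two elements mapping to a basis of $\mathbb{Z}\oplus\mathbb{Z}$; therefore $\bar\phi$ is an isomorphism.

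The only genuine content — and the step I expect to require the most care — is verifying that $rk$ and the ground-set count are well defined as functions on $K_0$, i.e.\ the additivity identity $rk(M) = rk(M\vert S) + rk(M/S)$ for pointed matroids together with the bookkeeping of basepoints. This is a standard fact in matroid theory (the corank-nullity relation, or directly from $rk(M/S) = rk(M) - rk(M\vert S)$ when $S$ is regarded via $cl_M(S)$), but one must check it interacts correctly with the pointing conventions of Section \ref{matroids_intro}: the basepoint $*_M$ is a loop, so it contributes $0$ to rank and is excluded from $\tilde E_M$, and restriction/contraction by a subset $S \subseteq \tilde E_M$ preserves the point, so no correction terms appear. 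Once this is in place, the independence of $[\mathbf a]$ and $[\mathbf b]$ is immediate from $\bar\phi(\mathbf a)=(1,0)$, $\bar\phi(\mathbf b)=(0,1)$, and the theorem follows.
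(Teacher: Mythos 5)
Your proposal is correct and follows essentially the same route as the paper: decompose any pointed matroid via the admissible sequences $\ses{M\vert e}{M}{M/e}$ to see that the two one-element matroids $\mathbf{a},\mathbf{b}$ generate, then use additivity of rank and (non-basepoint) ground-set size in admissible short exact sequences to get the isomorphism with $\mathbb{Z}\oplus\mathbb{Z}$; the paper states this argument more tersely but with the identical ingredients. Your care with the basepoint bookkeeping (using $|\tilde{E}_M|=|E_M|-1$) is in fact the correct reading of the formula in the theorem statement.
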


The forgetful functor $\FF: \MS \rightarrow \Set$ sending a pointed matroid to its ground set has a left adjoint (see \cite{HP})  $\GG: \Set \rightarrow \MS$ sending a pointed set $E$ to the "free pointed matroid on $E$". More precisely, $\GG(E)$ is the pointed matroid whose flats consist of all subsets of $E$ containing the basepoint. We have $\FF \circ \GG = I$, which implies the following:

\begin{mythm}\label{theomre: K_i injection}
There are injective group homomorphisms $$\pi^s_n(\mathbb{S}) \simeq K_n (\Set) \hookrightarrow K_n (\MS)$$ for all $n \geq 0$. 
\end{mythm}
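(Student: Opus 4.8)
The plan is to exploit the adjunction $\GG \dashv \FF$ together with the identity $\FF \circ \GG = \mathrm{id}_{\Set}$. Both functors are exact functors of proto-exact categories: $\FF$ is exact since it sends the canonical restriction $M\vert S \hookrightarrow M$ to the pointed injection $\widetilde{E}_M\cap(S\cup\{*\})\hookrightarrow E_M$ and the contraction $M\twoheadrightarrow M/S$ to the pointed surjection collapsing $S$ to the basepoint, which are precisely the admissible mono/epis in $\Set$; and $\GG$ is exact because $\GG(E)$ has \emph{all} basepoint-containing subsets as flats, so $\GG$ carries admissible short exact sequences of pointed sets to admissible short exact sequences of free pointed matroids (one checks $\GG(T)\vert S = \GG(S)$ and $\GG(T)/S=\GG(T\setminus S)$ directly from the flats description). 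Hence by functoriality of K-theory under exact functors, $\FF$ and $\GG$ induce group homomorphisms $\FF_*:K_n(\MS)\to K_n(\Set)$ and $\GG_*:K_n(\Set)\to K_n(\MS)$ for all $n\ge 0$.

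Next I would observe that functoriality of the K-theory construction respects composition of exact functors and sends the identity functor to the identity homomorphism; this is part of the general machinery recalled in the excerpt (K-theory is functorial and compatible with composition). Applying this to $\FF\circ\GG=\mathrm{id}_{\Set}$ gives $\FF_*\circ\GG_* = (\mathrm{id}_{\Set})_* = \mathrm{id}_{K_n(\Set)}$. A group homomorphism with a left inverse is injective, so $\GG_*:K_n(\Set)\hookrightarrow K_n(\MS)$ is injective for every $n\ge 0$. Combining with Theorem~\ref{Kset}, which identifies $K_n(\Set)$ with $\pi_n^s(\mathbb{S})$, yields the desired chain of injections $\pi_n^s(\mathbb{S})\simeq K_n(\Set)\hookrightarrow K_n(\MS)$.

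The only genuine content beyond abstract nonsense is verifying that $\GG$ is exact, i.e. that it preserves admissible short exact sequences; everything else is formal. Concretely, for a pointed surjection $p:(T,*)\twoheadrightarrow(T',*)$ in $\EE_{\Set}$ with admissible sub-object $(T'',*)\hookrightarrow(T,*)$ where $T''=p^{-1}(*)$, one must check that $\GG(T'')\hookrightarrow\GG(T)\twoheadrightarrow\GG(T')$ is again an admissible short exact sequence in $\MS$; this reduces to the two identities $\GG(T)\vert T'' \cong \GG(T'')$ and $\GG(T)/T''\cong\GG(T')$, both of which are immediate from Definition~\ref{restriction-contraction} applied to the matroid whose flats are all subsets containing the basepoint. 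I would spell these two isomorphisms out as a short lemma preceding the theorem, since they are the crux; the rest of the argument is a one-line diagram chase using $\FF_*\GG_* = \mathrm{id}$. I do not anticipate any real obstacle here, as the "free" nature of $\GG(E)$ makes all minor operations on it transparent; the main care needed is simply to confirm that the canonical maps realizing restriction and contraction of $\GG(E)$ agree on the nose with $\GG$ applied to the canonical maps in $\Set$, so that $\GG$ does land in the prescribed classes $\MM$ and $\EE$ rather than merely preserving isomorphism classes of short exact sequences.
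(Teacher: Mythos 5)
Your argument is exactly the paper's: the paper proves this theorem by noting that the forgetful functor $\FF$ and its exact left adjoint $\GG$ (the free pointed matroid functor) satisfy $\FF\circ\GG=\mathrm{id}_{\Set}$, so functoriality of K-theory gives $\FF_*\circ\GG_*=\mathrm{id}$ and hence $\GG_*$ is injective, combined with Theorem~\ref{Kset}. Your additional verification that $\GG$ is exact (via $\GG(T)\vert S\cong\GG(S)$ and $\GG(T)/S\cong\GG(T\setminus S)$, with the usual pointed convention that one contracts by $S\subseteq\wt{E}$) is correct and simply makes explicit a step the paper leaves implicit.
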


This shows in particular that $K_n(\MS)$ is in general non-trivial for $n > 0$, and contains interesting information of a homotopy-theoretic nature. 

\section{ $\H_{\MS}$ and the Matroid-Minor Hopf algebra}

In this section, we relate the Hall algebra of $\E = \MS$ ( and more generally of the categories $\MS(\mathcal{M})$)  to the Matroid-Minor Hopf algebras introduced by W.~R.~Schmitt in \cite{Schmitt}. We begin by reviewing the latter, adapting to the case of pointed matroids. 

\subsection{The Matroid-Minor Hopf algebra} \label{MM_Hopf}

Let $\mathcal{M}$ be a collection of pointed matroids which is closed under taking
pointed minors and direct sums and $\Miso$ be the set of isomorphism classes
of pointed matroids in $\mathcal{M}$. Let $[M]$ be the isomorphism class of a pointed matroid $M$ in $\mathcal{M}$. $\Miso$ is equipped with a natural commutative monoid structure, via the pointed direct sum, as follows:
\[
  [M_1]\cdot[M_2]:=[M_1\oplus M_2]
\]
and the identity $[(\{ \ast \}, \ast)]$, the equivalence class of the zero pointed
matroid. Let $k[\Miso]$ be the monoid algebra of $\Miso$ over a field
$k$.

In \cite{Schmitt} Schmitt constructs a comultiplication and counit:
\begin{itemize}
\item (Coproduct)
  \[
    \Delta: k[\Miso] \to k[\Miso]\otimes_kk[\Miso],\quad [M] \mapsto
    \sum_{S\subseteq \wt{E}_M} [M|_S] \otimes [M/S].
  \]
\item (Counit)
  \[
    \varepsilon:k[\Miso] \to k, \quad [M]\mapsto
    \left\{ \begin{array}{ll}
              1 & \textrm{if $E_M=\emptyset$}\\
              0& \textrm{if $E_M\neq \emptyset$},
            \end{array}
          \right. 
        \]
\end{itemize}

$k[\Miso]$ carries a natural grading, where $\deg(M,*_M) = \# \wt{E}_M$. 
With the above maps and grading, $k[\Miso]$ becomes a graded connected
bialgebra and hence, from the result of M.~Takeuchi
\cite{takeuchi1971free}, $k[\Miso]$ has a unique Hopf algebra
structure with a unique antipode $S$ given by:
\begin{equation}
  S=\sum_{i \in \mathbb{N}}(-1)^im^{i-1}\circ \pi^{\otimes i}\circ \Delta^{i-1},
\end{equation}
where $m^{-1}$ is a canonical injection from $k$ to $k[\Miso]$,
$\Delta^{-1}:=\varepsilon$, and $\pi:k[\Miso] \to k[\Miso]$ is the
projection map defined by
\[
  \pi|_{k[\Miso]_n} \left\{ \begin{array}{ll}
                          \id & \textrm{if $n \geq 1$}\\
                          0& \textrm{if $n=0$},
                        \end{array} \right.
\]
and extended linearly to $k[\Miso]$. 

\begin{rmk}
The requirement that $\mathcal{M}$ be closed under direct sums is only needed to define the algebra structure. The coalgebra structure requires only that $\mathcal{M}$ be closed under taking minors. If $\mathcal{M}_1 \subseteq \mathcal{M}_2$, then $k[\mathcal{M}_1]$ is a Hopf subalgebra of $k[\mathcal{M}_2]$.
\end{rmk}

The dual Hopf algebra of $k[\Miso]$, denoted $k[\Miso]^*$, is described explicitly in \cite{CS, KRS}. It is shown that $k[\Miso]^*$ is isomorphic to $\{f:  \Miso \rightarrow k \}$, with the product given by the convolution:
\begin{equation} \label{dual_mm_product}
f \diamond g ([M]) = \sum_{S \subseteq \wt{E}_M} f( [M \vert S]) g([M/S])
\end{equation}
and coproduct 
\[
\Delta(f)([M],[N]) := f([M \oplus N])
\]

Comparing this with the Hopf structure of the Hall algebra $\H_{\MS(\mathcal{M})}$ of Section \ref{Hall_alg_sec}, we see that the coproducts agree, and the algebra structures are opposite of each other; i.e. $\bullet = \diamond^{op}$. However, every enveloping algebra possesses an algebra anti-automorphism which fixes the coproduct. We thus obtain:

\begin{mythm} \label{theorem: hopf and hall}
Let $\mathcal{M}$ be a collection of pointed matroids closed under taking pointed minors and direct sums. Then $\H_{\MS{\mathcal{M}}} \simeq k[\Miso]^*$, where $k[\Miso]^*$ denotes Schmitt's matroid-minor Hopf algebra attached to the collection $\mathcal{M}$. $\H_{\MS{\mathcal{M}}} \simeq \mathbb{U}(\delta_{[M]}), \; [M] \in \Miso^{ind}$, where $\Miso^{ind}$ denotes the isomorphism classes of connected pointed matroids in $\Miso$. 
\end{mythm}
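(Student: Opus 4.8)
The plan is to identify $\H_{\MS(\mathcal{M})}$ with $k[\Miso]^*$ by comparing the two bialgebra structures directly on the basis of delta functions, and then upgrade the resulting bialgebra isomorphism to a Hopf algebra isomorphism using the Milnor--Moore-type structure theorem recorded in Theorem \ref{Hall_theorem}.

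First I would recall from Section \ref{Hall_alg_sec} (and the remark following the second proof of Theorem A) that the admissible subobjects of a pointed matroid $M$ in $\MS(\mathcal{M})$ are exactly the restrictions $M|S$ for $S\subseteq\wt{E}_M$, with corresponding quotient $M/S$; since $\mathcal{M}$ is closed under pointed minors, every such $M|S$ and $M/S$ lies in $\mathcal{M}$, so the Hall product and coproduct are well-defined on $\H_{\MS(\mathcal{M})}$. Writing out the Hall product from \eqref{hall_product} on delta functions gives $\delta_{[M]}\bullet\delta_{[N]}$ as a sum over isomorphism classes $[L]$ weighted by $\#\{S\subseteq\wt{E}_L : L|S\simeq N,\ L/S\simeq M\}$; dualizing, the convolution product \eqref{dual_mm_product} on $k[\Miso]^*$ satisfies $(f\diamond g)([L])=\sum_{S\subseteq\wt{E}_L} f([L|S])g([L/S])$, so pairing $f\diamond g$ against $[L]$ and expanding $f,g$ in the dual basis shows the $\diamond$-structure constants are the same numbers but with the roles of $M$ and $N$ interchanged — i.e. $\bullet=\diamond^{\mathrm{op}}$, exactly as asserted in the paragraph preceding the theorem. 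The coproducts match on the nose: $\Delta(\delta_{[M]})=\sum_{S\subseteq\wt{E}_M}\delta_{[M|S]}\otimes\delta_{[M/S]}$ from \eqref{coprod-def} together with the description of admissible subobjects of a direct sum (the hypothesis $C\subseteq A\oplus B\Rightarrow C\simeq A'\oplus B'$ holds for matroids), which is precisely the dual of Schmitt's coproduct $\Delta([M])=\sum_S[M|S]\otimes[M/S]$ read off against $[A]\otimes[B]$.

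Next I would invoke Theorem \ref{Hall_theorem}: since $\MS(\mathcal{M})$ is finitary proto-exact and satisfies the direct-sum condition, $\H_{\MS(\mathcal{M})}$ is a graded connected co-commutative Hopf algebra, hence by Milnor--Moore it is the universal enveloping algebra $\mathbf{U}(P)$ of its primitive Lie algebra $P$, which is spanned by $\delta_{[M]}$ for $M$ indecomposable — i.e. (by the remark after the $\BB$-module proof) for $M$ a connected pointed matroid in $\mathcal{M}$, giving $P$ spanned by $\{\delta_{[M]} : [M]\in\Miso^{\mathrm{ind}}\}$. To get the isomorphism with $k[\Miso]^*$ rather than just $(k[\Miso]^*)^{\mathrm{op}}$, I would use that any enveloping algebra $\mathbf{U}(\mathfrak{g})$ admits the principal anti-automorphism $x\mapsto -x$ on $\mathfrak{g}$, extended as an algebra anti-homomorphism, which is a coalgebra map; applying this to $\H_{\MS(\mathcal{M})}=\mathbf{U}(P)$ converts $\bullet$ into $\bullet^{\mathrm{op}}=\diamond$ while fixing the (co-commutative) coproduct, yielding a Hopf algebra isomorphism $\H_{\MS(\mathcal{M})}\xrightarrow{\ \sim\ }k[\Miso]^*$. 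Finally I would note $k[\Miso]^*$ is Schmitt's matroid-minor Hopf algebra by the identification in \cite{CS, KRS} quoted above, and that the antipode is automatically preserved since a graded connected bialgebra has a unique antipode (Takeuchi).

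The only real subtlety — and the step I would treat most carefully — is the passage from the bialgebra identification $\bullet=\diamond^{\mathrm{op}}$ to an honest Hopf \emph{isomorphism}: one must check that the principal anti-automorphism of the enveloping algebra is compatible with the gradings (it is, since it is homogeneous of degree $0$, negating each graded piece of $P$) and genuinely co-commutes with $\Delta$ (true because $\Delta$ is co-commutative, so $\mathrm{flip}\circ\Delta=\Delta$ and the anti-automorphism intertwines $\Delta$ with $\mathrm{flip}\circ(S\otimes S)\circ\Delta=(S\otimes S)\circ\Delta$ on primitives, extending multiplicatively-cum-antimultiplicatively). Everything else is bookkeeping: matching structure constants, confirming finitariness of $\MS(\mathcal{M})$ (inherited from $\MS$, which is finitary by Theorem A), and citing the known description of $k[\Miso]^*$.
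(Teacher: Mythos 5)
Your proposal is correct and follows essentially the same route as the paper: the paper's proof is exactly the comparison made in the paragraph preceding the theorem (the coproducts agree and $\bullet=\diamond^{\mathrm{op}}$), combined with the enveloping-algebra structure from Theorem \ref{Hall_theorem} and the fact that an enveloping algebra admits an algebra anti-automorphism fixing the coproduct. Your extra verifications (matching structure constants on delta functions, grading compatibility of the principal anti-automorphism, Takeuchi's uniqueness of the antipode) merely make explicit what the paper leaves implicit.
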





\begin{bibdiv}
	\begin{biblist}
	\bib{CS}{article}{
			author={Crapo, Henry},
			author={Schmitt, William},
			title={A free subalgebra of the algebra of matroids},
			journal={European J. Combin.},
			volume={26},
			date={2005},
			number={7},
			pages={1066--1085},
		}


\bib{CLS}{article}{
	author={Chu, Chenghao},
	author={Lorscheid, Oliver},
	author={Santhanam, Rekha},
	title={Sheaves and $K$-theory for $\Bbb F_1$-schemes},
	journal={Adv. Math.},
	volume={229},
	date={2012},
	number={4},
	pages={2239--2286},
}


\bib{CGM}{article}{
	author={Crowley, C.}, 
	author={Giansiracusa, N.},
	author={Mundinger, J.},
	title={A module-theoretic approach to matroids},
	journal={preprint},
	date={2017},
	eprint={arXiv:1712.03440},
}


\bib{Dei}{article}{
	author={Deitmar, Anton},
	title={Remarks on zeta functions and $K$-theory over ${\bf F}_1$},
	journal={Proc. Japan Acad. Ser. A Math. Sci.},
	volume={82},
	date={2006},
	number={8},
	pages={141--146},
}


\bib{D}{article}{
	author={Dyckerhoff, Tobias},
	title={Higher categorical aspects of Hall algebras},
	journal={preprint},
	eprint= {arXiv: 1505.06940},
}


\bib{DK}{article}{
	author={Dyckerhoff, Tobias},
	author={Kapranov, Mikhail},
	title={Higher Segal Spaces I},
	journal={preprint},
	eprint={arXiv: 1212.3563 },
}


\bib{EJS}{article}{
	author={Eppolito, Chris},
	author={Jun, Jaiung},
	author={Szczesny, Matt},
	title={Hopf algebras for matroids over hyperfields},
	journal={preprint},
	eprint={arXiv:1712.08903},
}


\bib{GKT1}{article}{
	author={Galvez, Imma},
	author={Kock, Joachim},
	author={Tonks, Andrew},
	title={Decomposition spaces, incidence algebras and Möbius inversion I: basic theory},
	journal={Adv. Math., to appear},
	eprint={arXiv:1512.07573},
}


\bib{GKT2}{article}{
	author={Galvez, Imma},
	author={Kock, Joachim},
	author={Tonks, Andrew},
	title={Decomposition spaces, incidence algebras and Möbius inversion II: completeness, length filtration, and finiteness},
	journal={Adv. Math., to appear},
	eprint={arXiv:1512.07577},
}


\bib{GKT3}{article}{
	author={Galvez, Imma},
	author={Kock, Joachim},
	author={Tonks, Andrew},
	title={Decomposition spaces, incidence algebras and Möbius inversion III: the decomposition space of Möbius intervals},
	journal={Adv. Math., to appear},
	eprint={arXiv:1512.07580},
}


\bib{Hek}{thesis}{
	author={Hekking, Jeroen},
	title={Segal Objects in Homotopical Categories \& K-theory of Proto-exact Categories},
	type={Master's Thesis, Univ. of Utrecht},
	date={2017},
	eprint={https://www.universiteitleiden.nl/binaries/content/assets/science/mi/scripties/master/hekking_master.pdf},
}


\bib{HP}{article}{
	author={Heunen, Chris},
	author={Patta, Vaia},
	title={The category of matroids},
	journal={Applied Categorical Structures},
	volume={26},
	number={2},
	pages={205--237},
	year={2018},
	publisher={Springer}
}


\bib{H}{article}{
	author={Hubery, Andrew},
	title={From Triangulated Categories to Lie Algebras: a theorem of Peng and Xiao},
	journal={Trends in Representation Theory of Algebras and Related Topics, Contemp. Math.},
	volume={406},
	pages={51--66},
	year={2006},
	publisher={American Mathematical Soc., Providence, RI}
}


\bib{K1}{article}{
	title={Eisenstein series and quantum affine algebras},
	author={Kapranov, Mikhail},
	journal={Journal of Mathematical Sciences},
	volume={84},
	number={5},
	pages={1311--1360},
	year={1997},
	publisher={Springer}
}


\bib{KSV}{article}{
	title={The Hall algebra of a curve},
	author={Kapranov, Mikhail},
	author={Schiffmann, Olivier}
	author={Vasserot, Eric}
	journal={Selecta Math. (N.S.)},
	volume={23}
	number={1}
	pages={117--177}
	year={2017}
}

\bib{KRS}{article}{
	title={A convolution formula for the Tutte polynomial},
	author={Kook, Woong},
	author={Reiner, Victor}
	author={Stanton, Dennis}
	journal={J. Combin. Theory Ser. B},
	volume={76}
	number={2}
	pages={297--300}
	year={1999}
}


\bib{KS}{article}{
title={Feynman graphs, rooted trees, and Ringel-Hall algebras},
author={Kremnizer, Kobi},
author={Szczesny, Matt}
journal={Comm. Math. Phys.},
volume={289}
number={2}
pages={561--577}
year={2009}
}


\bib{LR}{article}{
	title={Combinatorial Hopf algebras},
	author={Loday, Jean-Louis},
	author={Ronco, María}
	journal={Quanta of Maths},
	volume={11}
	pages={347--383}
	year={2010}
	publisher={Clay Math. Proc., 11, Amer. Math. Soc., Providence, RI}
}


\bib{MZ}{article}{
title={Tropical curves, their Jacobians and theta functions},
author={Mikhalkin, Grigory},
author={Zharkov, Ilia}
journal={Curves and abelian varieties},
volume={465},
pages={203--230},
year={2008}
}


\bib{Ox}{book}{
	title={Matroid theory},
	author={Oxley, James},
	volume={3},
	year={2006},
	publisher={Oxford University Press, USA}
}


\bib{S}{article}{
title={Lectures on Hall algebras},
author={Schiffmann, Olivier},
journal={Geometric methods in representation theory. II, Sémin. Congr., 24-II, Soc. Math. France, Paris},
pages={1--141},
year={2012}
}


\bib{Schmitt}{article}{
title={Incidence Hopf algebras},
author={Schmitt, William R},
journal={J. Pure Appl. Algebra},
volume={96},
number={3},
pages={299--330},
year={1994}
}


\bib{Sz3}{article}{
	title={Incidence categories},
	author={Szczesny, Matt},
	journal={J. Pure Appl. Algebra},
	volume={215},
	number={4},
	pages={303--309},
	year={2011},
	publisher={Elsevier}
}

\bib{Sz1}{article}{
title={On the Hall algebra of coherent sheaves on $\mathbb{P}^1$ over $\mathbb{F}_1$},
author={Szczesny, Matt},
journal={J. Pure Appl. Algebra},
volume={216},
number={3},
pages={662--672},
year={2012},
publisher={Elsevier}
}


\bib{Sz4}{article}{
title={Representations of quivers over $\mathbb{F}_1$ and Hall algebras},
author={Szczesny, Matt},
journal={Int. Math. Res. Not.},
volume={2012},
number={10},
pages={2377--2404},
year={2012},
publisher={Oxford University Press}
}


\bib{Sz2}{article}{
 title={On the Hall algebra of semigroup representations over $\mathbb{F}_1$},
author={Szczesny, Matt},
journal={Math. Z.},
volume={276},
number={1-2},
pages={371--386},
year={2014},
publisher={Springer}
}


\bib{takeuchi1971free}{article}{
title={Free Hopf algebras generated by coalgebras},
author={Takeuchi, Mitsuhiro},
journal={Journal of the Mathematical Society of Japan},
volume={23},
number={4},
pages={561--582},
year={1971},
publisher={The Mathematical Society of Japan}
}


\end{biblist}
\end{bibdiv}


\address{\tiny Department of Mathematical Sciences, Binghamton University, Binghamton, NY} \\
\indent \footnotesize{\email{eppolito@math.binghamton.edu}}

\address{\tiny Department of Mathematical Sciences, Binghamton University, Binghamton, NY} \\
\indent \footnotesize{\email{jjun@math.binghamton.edu}}

\address{\tiny Department of Mathematics and Statistics, Boston University, 111 Cumminton Mall, Boaston} \\
\indent \footnotesize{\email{szczesny@math.bu.edu}}

\end{document}